\title{Quantifying CDS Sortability of Permutations\\ by Strategic Pile Size}
\author{Marisa Gaetz$^1$}
\address{$^1$Massachusetts Institute of Technology}
\email{mgaetz@mit.edu}
\author{Bethany Flanagan$^2$}
 \address{$^2$Purdue University}
\email{bmolokac@purdue.edu}
\author{Marion Scheepers$^3$}
 \address{$^3$Boise State University}
\email{mscheepe@boisestate.edu}
\author{Meghan Shanks$^4$}
\address{$^4$University of Illinois at Urbana-Champaign}
\email{meghans2@illinois.edu}
\newtheorem{thm}{Theorem}[section]
\newtheorem{cor}[thm]{Corollary}
\newtheorem{lemma}[thm]{Lemma}
\newtheorem{remark}[thm]{Remark}
\newtheorem{prop}[thm]{Proposition}
\newtheorem{observation}[thm]{Observation}
\theoremstyle{definition}
\newtheorem{definition}[thm]{Definition}
\newtheorem{defn}[thm]{Definition}
\newtheorem{example}[thm]{Example}
\newtheorem{alg}[thm]{Algorithm}
\newtheorem{notation}[thm]{Notation}
\newcommand{\cds}{{\upshape{\textbf{CDS}}}\xspace} 
\DeclareMathOperator{\orb}{orb}
\DeclareMathOperator{\stab}{stab}
\theoremstyle{plain}
\newtheorem*{repp@theorem}{\repp@title (reformulated)}
\newcommand{\newrepptheorem}[2]{%
\newenvironment{repp#1}[1]{%
 \def\repp@title{#2 \ref{##1}}%
\begin{repp@theorem}}%
 {\end{repp@theorem}}}
\theoremstyle{definition}
\newtheorem*{rep@theorem}{\rep@title \ continued}
\newcommand{\newreptheorem}[2]{%
\newenvironment{rep#1}[1]{%
 \def\rep@title{#2 \ref{##1}}%
 \begin{rep@theorem}}%
 {\end{rep@theorem}}}
\newcommand{\vertex}{\node[vertex]}
\tikzstyle{vertex}=[circle, draw, inner sep=0pt, minimum size=10pt]
\tikzstyle{decision} = [diamond, draw, text width=5em, text badly centered, node distance=3cm, inner sep=0pt]
\tikzstyle{block} = [rectangle, draw, text width=5em, text centered, rounded corners, minimum height=4em]
\definecolor{lightgrey}{RGB}{216,227,225}
\begin{document}
\begin{abstract}
The special purpose sorting operation, \emph{context directed swap} (\cds), is an example of the block interchange sorting operation studied in prior work on permutation sorting. \cds has been postulated to model certain molecular sorting events that occur  in the  genome maintenance program of some species of ciliates. We investigate the mathematical structure of permutations not sortable by the \cds sorting operation. In particular, we present substantial progress towards quantifying permutations with a given \textit{strategic pile} size, which can be understood as a measure of \cds non-sortability. Our main results include formulas for the number of permutations in $\textsf{S}_n$ with maximum size strategic pile. More generally, we derive a formula for the number of permutations in $\textsf{S}_n$ with strategic pile size $k$, in addition to an algorithm for computing certain coefficients of this formula, which we call \textit{merge numbers}. 
\end{abstract}

\maketitle

\noindent \textit{2010 Mathematics Subject Classification. 05A05, 05A17}

\noindent \textit{Keywords: permutation sorting, context directed swap, strategic pile, factorization into cycles}

\section{Introduction}

Sorting is a fundamental step in numerous natural, industrial, commercial, and  scientific computing processes. Correspondingly, the mathematical analysis of sorting operations has a long history. The typical concerns with a sorting process include the efficiency of the sorting operation, a characterization of the situations in which the sorting operation achieves the sorting objective, and a characterization of the situations in which the sorting operation does not achieve the sorting objective. In this paper we focus on the third of these concerns. In particular, we seek to quantify for a specific sorting operation the prevalence of what can be seen as the worst case obstruction to sortability.

The specific sorting operation we consider aims to sort a permuted list of the numbers $1, 2, \ldots, n$ to the canonical ordered list $( 1, 2, \ldots,  n)$. This sorting operation appears in two prior works. It appears in the 2003 template model for the construction of a new macronucleus from its scrambled precursor micronucleus in certain ciliate species \cite{PER1}. In this model the sorting operation is named \emph{dlad}. For more on this fascinating biological background the reader may consult the review \cite{PrescottGymnastics} and the textbook \cite{CiliateText}. It turns out, by hindsight, that this sorting operation also includes special cases of the block interchange sorting operation examined in \cite{Christie} by Christie (1996). The \emph{minimal block interchanges} identified by Christie are special cases of the \emph{dlad} operation. 

In yet another investigation into genome rearrangement combinatorics, the double cut and join operation, denoted DCJ, is introduced by Yancopoulos \emph{et al.} \cite{DCJ1} (2005) to establish a mathematical measure of distance between two genomes. In the DCJ theory, generic block interchanges studied by Christie in \cite{Christie} are modeled by a very specific sequence of DCJ events, visualized in \cite[Figure 6]{DCJ1}. Modeling \emph{dlad} as a DCJ operation requires specifying additional DCJ constraints. To emphasize the specific mathematical nature of the sorting operation we consider here, the operation will be called \emph{context directed swap}, denoted \cds. We base our treatment on the mathematical counterpart of the essential features identified in the paper \cite{PER1}. 

A permuted list of numbers is said to be \cds-\emph{sortable} if there is a sequence of applications of the \cds sorting operation (to be defined in the next section) that results in the numbers listed in increasing order. Not every permutation is sortable by \cds. \cds-sortability criteria have been given previously (for instance, see \cite{cdssort}). Also, from prior work one can deduce that if a permutation is \cds-sortable, then sorting by applications of \cds provides the most efficient sorting by block interchanges. Mathematically interesting phenomena arise from the study of permutations not sortable by applications of \cds. The essential structural obstacle to a permutation's \cds-sortability was identified in \cite{AHMMSTW}, giving rise to the notion of the \emph{strategic pile} of a permutation. 

The notions of \cds-sortability, the strategic pile of a permutation, and appropriate notation and terminology will be introduced in Section 1 below. In this section we explicitly describe the problem being treated in this paper, and we report our findings in Sections 2 through 4.

In section 2, we determine the number of elements in $\textsf{S}_n$ that have the maximum size strategic pile among all elements of $\textsf{S}_n$. 
This counting problem reduces to a variation of the cycle factoring problem for $\textsf{S}_n$, studied previously, and on the cycle factoring results of \cite{BertramWei} and \cite{cangelmi}. In section 3, we investigate how prevalent it is for  permutations in $\textsf{S}_n$ to have strategic piles of cardinality $k$. As a result of this work we develop formulas in closed form that produce the terms of the integer sequences A267323, A267324 and A267391 in \cite{oeis}. We also contribute the integer sequence A281259 to \cite{oeis}, as well as its formula. In section 4, we highlight a more challenging component of our formula from section 3.

\section{Preliminaries}

For a positive integer $n$, the symbol $\textsf{S}_n$ denotes the set of one-to-one functions from the set $\{1, 2, \ldots, n\}$ to itself, also known as permutations of $\{1, 2, \ldots, n\}$.
The notation
\begin{equation}\label{eq:genperm}
  \lbrack a_1\; a_2\; \cdots\; a_{n-1}\; a_n\rbrack
\end{equation}
denotes the permutation $\pi$ for which $\pi(i)=a_i$ for $1\le i\le n$. 
In current literature the notation in (\ref{eq:genperm}) is called \emph{one line notation}. This one-line notation
should be distinguished from
\begin{equation}\label{eq:cycle}
 (c_1\; c_2\; \cdots\; c_{k-1}\; c_k),
\end{equation}
which is the so-called \emph{cycle notation} that denotes the permutation $\pi$ where $\pi(c_1) = c_2,\; \pi(c_2) = c_3, \ldots ,\; \allowbreak \pi(c_{k-1}) = c_k,\; \pi(c_k) = c_1$, and where $\pi(i) = i$ for $i\not\in\{c_1, \ldots, c_k\}$. (Note that this notation is very similar to notation we will later use to describe ordered lists. The two can be distinguished by noting that we do not use commas to describe a cycle permutation, but will use them to describe ordered lists.)

To define the \cds sorting operation, associate with each entry of the permutation $\pi\in \textsf{S}_n$ left and right pointers as follows: For an entry $k \in \{1,2, \ldots,n\}$ of $\pi$, the \emph{left pointer} of $k$ is $\langle k-1,\; k\rangle$, while the \emph{right pointer} of $k$ is $\langle k,\; k+1\rangle$. By convention, the smallest entry, $1$, does not have a left pointer, and the largest entry, $n$, does not have a right pointer.

\begin{example}\label{ex:cds1}
Equation (\ref{eq:pointerexample}) shows the permutation $\pi = [2 \ 4 \ 3 \ 1 \ 5]$ with all pointers marked.
\begin{equation} \label{eq:pointerexample}
\pi =[_{\langle1,2\rangle}2_{\langle 2,3\rangle} \ \ \ _{\langle 3,4\rangle}\!4_{\langle 4,5\rangle} \ \ \ _{\langle 2,3\rangle}3_{\langle 3,4\rangle} \ \ \  
1_{\langle 1,2\rangle} \ \ \ _{\langle 4,5\rangle}5
].
\end{equation}

\end{example}

Observe that each pointer in a permutation occurs twice. Given two pointers, $p$ and $q$, in the permutation $\pi$, the sorting operation \cds at these pointers acts as follows on $\pi$: If the pointers do not appear in the order $\cdots p\; \cdots q\; \cdots p\; \cdots q\; \cdots$ in $\pi$, then \cds does not apply and we say that the pointer context is invalid. Otherwise, the two segments of $\pi$ that are flanked by the pointer context $p\cdots q$ are interchanged.

\begin{repexample}{ex:cds1}
 The pointers $p = \langle 3,\; 4 \rangle$ and $q = \langle 4,\; 5 \rangle$ appear in $\cdots p\cdots q\cdots p \cdots q \cdots$ context in the permutation $\pi = \lbrack 2\; 4\; 3\; 1\; 5 \rbrack$. \cds applied to $\pi$ for this pointer context produces the permutation $\lbrack 2\;1\; 3\; 4\; 5 \rbrack$.
On the other hand, as the pointers $r = \langle 1,\; 2 \rangle$ and $s = \langle 3,\; 4 \rangle$ appear in $ \cdots r\cdots s\cdots s \cdots r \cdots$ context in $\pi$, \cds cannot be applied. \end{repexample}

When there are no pointers $p$ and $q$ that appear in context $\cdots p\cdots q\cdots p\cdots q \cdots$ in $\pi$, the permutation $\pi$ is said to be a \cds \emph{fixed point}. For each positive integer $n$, there are exactly $n$ \cds fixed points in $\textsf{S}_n$, namely the permutations $\lbrack k+1\;  \cdots \; n\; 1\; 2\; \cdots\; k\rbrack$ for $1\le k<n$, and the identity permutation $\lbrack 1\; 2\; \cdots \; n-1\; n\rbrack$.

By  \cite{AHMMSTW}, we know that for each permutation $\pi$ in $\textsf{S}_n$ that is not a \cds fixed point, some sequence of applications of \cds to $\pi$ terminate in a \cds fixed point. If a sequence of applications of \cds to the permutation $\pi$ terminates in the identity permutation $\lbrack 1\; 2\; \cdots\; n\rbrack$, we say that $\pi$ is \cds-\emph{sortable}. 
The \cds-sortability of permutations has been characterized in prior works such as \cite{AHMMSTW} and \cite{cdssort}. In \cite{AHMMSTW}, the obstacle to sortability of a permutation $\pi\in \textsf{S}_n$ is identified as follows. Suppose $\pi = \lbrack a_1\; a_2\; \cdots \; a_n\rbrack$. Define the cycle permutations $X_n$ and $Y_{\pi}$ by

\begin{equation}\label{eq:X_ndef}
 X_n := (0\; 1\; 2\; \cdots\; n), \text{ and }
\end{equation}
\begin{equation}\label{eq:Y_pidef}
 Y_{\pi} := (0\; a_n\; a_{n-1}\; \cdots\; a_1).
\end{equation}
Then define 
\begin{equation}\label{eq:C_pidef}
  C_{\pi} := Y_{\pi}\circ X_n.
\end{equation}
In equation (\ref{eq:C_pidef}) the symbol ``$\circ$" denotes functional composition, and we use the standard convention that $f\circ g(x)$ denotes the value $f(g(x))$.

When the entries $0$ and $n$ occur in the same cycle in the disjoint cycle decomposition of $C_{\pi}$, we shall write this cycle in the form
\begin{equation}\label{eqstrpile}
   (0\; u_1\; u_2\; \cdots\; u_j\; n\; b_1\; b_2\; \cdots\; b_k).
\end{equation}
The set $\textsf{SP}(\pi) = \{b_1, b_2, \ldots, b_k\}$ is said to be the \emph{strategic pile of $\pi$}. If $0$ and $n$ do not appear in the same cycle, we define $\textsf{SP}(\pi)$ to be the empty set. 
The ordered list $\textsf{SP}^*(\pi) = (b_1, b_2, \ldots, b_k)$ is called the \emph{ordered strategic pile of} $\pi$, and its ordering is determined by the order of appearance in (\ref{eqstrpile}). In \cite{AHMMSTW}, it was proven that a permutation $\pi$ is \cds-sortable if and only if its strategic pile is the empty set (i.e. if and only if 0 and $n$ do not appear in the same cycle).

\begin{example}\label{ex:strp1}
For the permutation $\pi = \lbrack 2\; 5\; 1\; 4\; 3 \rbrack$ we have $\textsf{C}_{\pi} = Y_{\pi} \circ X_5 = (0 \; 3\; 4\; 1\; 5\; 2)(0\; 1\; 2\; 3\; 4\; 5) = (0\; 5\; 3\; 1)(2\; 4)$, written in disjoint cycle form. Thus, the strategic pile of $\pi$ is the set $\textsf{SP}(\pi) = \{1, 3\}$, while $\textsf{SP}^*(\pi) = (3, 1)$.
\end{example}

The strategic pile of a permutation $\pi$ is intimately related to the set of achievable \cds fixed points:

\begin{thm}[\cite{AHMMSTW}]\label{thm:strpileth} If a permutation $\pi\in \textsf{S}_n$ is not \cds-sortable, then the following are equivalent for $1\le k<n$:
\begin{enumerate}
\item{There is a sequence of applications of \cds to $\pi$ that terminates in the \cds fixed point $\lbrack k+1\; k+2 \; \cdots\; n\; 1\; 2\; \cdots\; k \rbrack$.}
\item{$k$ is a member of the strategic pile of $\pi$.}
\end{enumerate}
\end{thm}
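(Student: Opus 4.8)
The plan is to pass to the permutation $C_\pi$ on $\{0,1,\dots,n\}$ and to use the standard dictionary---going back to \cite{Christie} and made explicit for \cds in \cite{AHMMSTW}---between one application of \cds and a \emph{fission} of $C_\pi$. Concretely, I would rely on the following facts about a \cds move $\pi\to\pi'$: the number of disjoint cycles of $C_{\pi'}$ exceeds that of $C_\pi$ by exactly $2$; $C_{\pi'}$ is obtained from $C_\pi$ by splitting one cycle into two or three parts (equivalently, two cycles into two parts each); and the two pointers at which \cds acts determine which cycle(s) of $C_\pi$ are split and where. I would also record the direct computation (of the kind in Example~\ref{ex:strp1}) that for $F_k:=\lbrack k+1\;k+2\;\cdots\;n\;1\;2\;\cdots\;k\rbrack$ the permutation $C_{F_k}$ consists of the single $3$-cycle $(0\;n\;k)$ and $n-2$ fixed points, so $\textsf{SP}(F_k)=\{k\}$ and $F_k$ is a \cds fixed point.

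For $(1)\Rightarrow(2)$ I would prove that the strategic pile is non-increasing along \cds moves: if $\pi\to\pi'$ and $\pi'$ is not \cds-sortable, then $\textsf{SP}(\pi')\subseteq\textsf{SP}(\pi)$. Writing the cycle of $C_\pi$ through $0$ and $n$ as $\gamma=(0\;u_1\cdots u_s\;n\;b_1\cdots b_t)$, the fission either leaves $\gamma$ untouched---so $\textsf{SP}(\pi')=\textsf{SP}(\pi)$---or splits $\gamma$. In the latter case $0$ and $n$ must still share a cycle of $C_{\pi'}$ (otherwise $\pi'$, hence $\pi$, would be \cds-sortable), so the parts of the split containing $0$ and containing $n$ coincide; a case analysis on how the split meets the $u$-arc and the $b$-arc of $\gamma$ then shows that the new cycle through $0$ and $n$, written as $(0\;\cdots\;n\;b'_1\cdots b'_{t'})$, has $\{b'_1,\dots,b'_{t'}\}\subseteq\{b_1,\dots,b_t\}$. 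Applying this along any \cds sequence $\pi\to\cdots\to F_k$ yields $\{k\}=\textsf{SP}(F_k)\subseteq\textsf{SP}(\pi)$, that is, $k\in\textsf{SP}(\pi)$.

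For $(2)\Rightarrow(1)$ I would induct on $d(\pi):=\tfrac12\bigl((n-1)-c(C_\pi)\bigr)$, where $c(C_\pi)$ is the number of cycles of $C_\pi$; this is a non-negative integer because non-sortability forces $\gamma$ to have length at least $3$. If $d(\pi)=0$ then $C_\pi$ has $n-1$ cycles, and inspecting the two possible cycle types shows that the only non-sortable permutation with this property is $\pi=F_{k}$ itself, so $\textsf{SP}(\pi)=\{k\}$ and $F_k=\pi$ is reached by the empty sequence. If $d(\pi)\ge1$ then $\pi$ is not a \cds fixed point, and I would invoke the key claim: given $k\in\textsf{SP}(\pi)$ there is a \cds move $\pi\to\pi'$ with $k\in\textsf{SP}(\pi')$. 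Since every \cds move is a fission we then have $d(\pi')=d(\pi)-1$, so the inductive hypothesis gives a \cds sequence $\pi'\to\cdots\to F_k$, and prepending $\pi\to\pi'$ completes the step.

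The hard part is exactly this last claim: producing an \emph{actual} \cds move---pointers occurring in genuine $\cdots p\cdots q\cdots p\cdots q\cdots$ context---whose fission keeps $k$ strictly between $n$ and $0$ in the $(0,n)$-cycle. I would attack it with the explicit pointer-to-cycle dictionary and a case analysis on the cycle structure of $C_\pi$: when $C_\pi$ has nontrivial cycles besides $\gamma$, choose a move that splits such structure and leaves $\textsf{SP}$ literally unchanged; when $C_\pi$ equals $\gamma$ together with fixed points, so that every \cds move must split $\gamma$, choose a move whose part through $0$ and $n$ still contains $k=b_i$ (for instance by peeling off the $u$-arc together with the $b_j$ for $j\neq i$). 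Verifying that such a pointer pair always exists and always sits in valid \cds context---rather than merely naming an abstract cycle surgery---is where the real work lies. One would like to obtain this by transporting the known optimality of \cds among block interchanges from the identity target to the target $F_k$, but because the pointer structure treats $1$ and $n$ asymmetrically this is not a formal relabeling and must be argued directly.
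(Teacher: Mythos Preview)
The paper does not prove this theorem; it is quoted from \cite{AHMMSTW} as background, with no argument supplied here. There is therefore no in-paper proof against which to compare your proposal.

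As a standalone sketch, your outline has the right architecture. The $(1)\Rightarrow(2)$ direction via ``$\textsf{SP}$ is non-increasing under \cds'' is the standard route, and the fission picture for $C_\pi$ is the correct translation. For $(2)\Rightarrow(1)$, however, you yourself flag the essential gap: you need, for each $k\in\textsf{SP}(\pi)$ with $\pi$ not a fixed point, an explicit \cds move keeping $k$ in the strategic pile, and your case split is only a plan, not a verification. This is not a detail---it is precisely the content that separates the theorem from the one-sided bound $(1)\Rightarrow(2)$. Two smaller points worth tightening: your appeal to ``$\pi'$ not \cds-sortable'' inside the $(1)\Rightarrow(2)$ argument implicitly uses the sortable/non-sortable dichotomy (either every terminal \cds fixed point of $\pi$ is the identity, or none is), which should be cited or proved; and in the $d(\pi)=0$ base case you should record not just that $\pi$ is some $F_j$ but that the hypothesis $k\in\textsf{SP}(\pi)=\{j\}$ forces $j=k$. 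If you intend a self-contained proof, the missing reachability step is carried out in \cite{AHMMSTW}.
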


We now investigate the number of permutations in $\textsf{S}_n$ with maximum size strategic piles; these permutations can be considered to have maximal \cds non-sortability.

\section{Maximum Size Strategic Piles}

Since there are $n$ \cds fixed points (including the identity permutation), Theorem \ref{thm:strpileth} implies that a strategic pile of a permutation in $\textsf{S}_n$ can have at most $n-1$ elements.

\begin{lemma}\label{fullpileparity}
If there is a permutation in $\textsf{S}_n$ which has a strategic pile of size $n-1$, then $n$ is even.
\end{lemma}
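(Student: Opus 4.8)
The plan is to reduce the statement to a single parity computation on the permutation $C_\pi$. First I would record what a strategic pile of the maximum possible size actually tells us about the cycle structure of $C_\pi$. The symbols occurring in $C_\pi = Y_\pi\circ X_n$ are exactly $0,1,\ldots,n$, so there are $n+1$ of them, and by the way the cycle through $0$ and $n$ is written in (\ref{eqstrpile}) the strategic pile $\textsf{SP}(\pi)$ is always a subset of $\{1,2,\ldots,n-1\}$ (the symbols $0$ and $n$ occupy their own designated positions and cannot be among the $b_i$). Hence $|\textsf{SP}(\pi)| = n-1$ forces $\textsf{SP}(\pi) = \{1,\ldots,n-1\}$ and, in the notation of (\ref{eqstrpile}), forces $j=0$: the cycle containing $0$ and $n$ is $(0\; n\; b_1\; \cdots\; b_{n-1})$, and it already exhausts all $n+1$ symbols. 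Therefore $C_\pi$ is a single $(n+1)$-cycle.

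Next I would bring in the factorization $C_\pi = Y_\pi\circ X_n$ from (\ref{eq:C_pidef}) together with the definitions (\ref{eq:X_ndef}) and (\ref{eq:Y_pidef}). Both $X_n = (0\; 1\; \cdots\; n)$ and $Y_\pi = (0\; a_n\; \cdots\; a_1)$ are $(n+1)$-cycles, and an $(n+1)$-cycle is a product of $n$ transpositions, so each has sign $(-1)^n$. Consequently the sign of $C_\pi$ is $(-1)^n\cdot(-1)^n = 1$; that is, $C_\pi$ is an even permutation for every $\pi\in\textsf{S}_n$, independently of $n$.

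Finally, I would combine the two observations. If some $\pi\in\textsf{S}_n$ has a strategic pile of size $n-1$, then by the first step $C_\pi$ is an $(n+1)$-cycle, so its sign is $(-1)^n$; but by the second step that sign equals $1$. Hence $(-1)^n = 1$, i.e.\ $n$ is even. I do not expect a genuine obstacle here: the one point that warrants care is the first step, namely checking that a size-$(n-1)$ strategic pile really pins down the entire cycle structure of $C_\pi$ (a full $(n+1)$-cycle) rather than merely a long cycle through $0$ and $n$ with some $u_i$'s left over; once that counting observation is in place, the remainder is a one-line sign count.
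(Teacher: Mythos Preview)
Your proof is correct and follows essentially the same approach as the paper: both argue that a strategic pile of size $n-1$ forces $C_\pi$ to be a full $(n+1)$-cycle, and then use that $C_\pi = Y_\pi\circ X_n$ is a product of two $(n+1)$-cycles and hence even, so $(-1)^n = 1$. You supply more detail than the paper does in justifying why $C_\pi$ must be a single $(n+1)$-cycle (the paper simply cites the display (\ref{eqstrpile})), but the underlying argument is the same parity computation.
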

\begin{proof}
By (\ref{eqstrpile}), if the strategic pile of permutation $\pi$ has size $n-1$, then
\begin{equation}\label{eq:fullstrpile}
C_{\pi} =    (0\; n\; b_1\; b_2\; \cdots\; b_{n-1}).
\end{equation}
But $C_{\pi}$ is the composition of two $(n+1)$-cycles, and thus an even permutation. Therefore $n$ is even. 
\end{proof}

As we shall see later, the converse of Lemma \ref{fullpileparity} also holds. As a consequence of Lemma \ref{fullpileparity} we find

\begin{cor}\label{oddmaxpile}
If $n$ is odd, then the strategic pile of an element of $\textsf{S}_n$ has at most $n-2$ elements.
\end{cor}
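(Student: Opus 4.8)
The corollary is an immediate consequence of Lemma \ref{fullpileparity}, so the plan is to derive it by a short contrapositive argument. First I would recall that, by Theorem \ref{thm:strpileth} and the remark opening Section 2, the strategic pile of any element of $\textsf{S}_n$ has size at most $n-1$. So the only way the conclusion could fail is if some $\pi \in \textsf{S}_n$ has a strategic pile of size exactly $n-1$.

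Next I would invoke Lemma \ref{fullpileparity}: the existence of a permutation in $\textsf{S}_n$ with a strategic pile of size $n-1$ forces $n$ to be even. Taking the contrapositive, if $n$ is odd then no element of $\textsf{S}_n$ has a strategic pile of size $n-1$, hence every strategic pile has size at most $n-2$. That completes the argument; there is no real obstacle here, since all the work is already packaged in Lemma \ref{fullpileparity}. The only thing to be careful about is the edge behavior: for odd $n$ one should note $n \ge 3$ so that ``$n-2$'' is a meaningful nonnegative bound (for $n=1$ the statement is vacuous or trivially true since the unique permutation is the identity with empty strategic pile).

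If one wanted a self-contained proof instead of citing the lemma, the same parity computation would be reproduced: assuming a strategic pile of size $n-1$, equation (\ref{eqstrpile}) gives $C_\pi = (0\; n\; b_1\; \cdots\; b_{n-1})$, a single $(n+1)$-cycle; but $C_\pi = Y_\pi \circ X_n$ is a product of two $(n+1)$-cycles and hence an even permutation, whereas an $(n+1)$-cycle is even only when $n+1$ is odd, i.e. when $n$ is even --- contradicting the oddness of $n$. Either way the proof is two lines, and I expect the author's proof to simply reference Lemma \ref{fullpileparity}.
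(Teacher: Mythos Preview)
Your proposal is correct and matches the paper's approach exactly: the paper states the corollary immediately after Lemma \ref{fullpileparity} with the phrase ``As a consequence of Lemma \ref{fullpileparity} we find'' and gives no further proof. Your contrapositive argument spells out precisely the intended one-line deduction.
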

We shall also later see that there are permutations in $\textsf{S}_n$ with strategic pile of size $n-2$ for every odd integer $n \geq 1$. In the next two subsections we count for each $n$ the number of permutations in $\textsf{S}_n$ with strategic pile of maximal size for $n$. Subsection \ref{subsec:evenn} is dedicated to the case when $n$ is even, and Subsection \ref{subsec:oddn} is dedicated to the case when $n$ is odd.

\subsection{Maximum Size Strategic Piles for Even Values of $n$}\label{subsec:evenn}

\begin{thm} \label{fullStratPile}
For each even number $n$, the number of permutations in $\textsf{S}_n$ with strategic pile of size $n-1$ is $$\frac{2(n-1)!}{n}.$$
\end{thm}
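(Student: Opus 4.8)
The plan is to convert the statement into a count of factorizations of a long cycle into two cycles, which is the setting of the cycle-factoring results cited in the abstract. First I would record what a strategic pile of size $n-1$ means: by (\ref{eqstrpile}), if $|\textsf{SP}(\pi)| = n-1$ then the cycle of $C_\pi$ through $0$ and $n$ has $n+1$ entries, hence is all of $C_\pi$, and it has the shape $(0\; n\; b_1\; \cdots\; b_{n-1})$; conversely, any $\pi$ for which $C_\pi$ is a single $(n+1)$-cycle with $C_\pi(0) = n$ has $|\textsf{SP}(\pi)| = n-1$. So the quantity to compute is the number of $\pi \in \textsf{S}_n$ such that $C_\pi$ is an $(n+1)$-cycle and $C_\pi(0) = n$.

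Next I would recast the normalization $C_\pi(0) = n$ in terms of products of cycles. A direct check shows every $(n+1)$-cycle $(0\; n\; b_1\; \cdots\; b_{n-1})$ equals $D \circ (0\;n)$, where $D := (0\; b_1\; \cdots\; b_{n-1})$ is an $n$-cycle on $\{0,1,\ldots,n-1\}$, and that $D \mapsto D \circ (0\;n)$ is a bijection from the $n$-cycles on $\{0,1,\ldots,n-1\}$ onto the $(n+1)$-cycles $\rho$ with $\rho(0) = n$. Also $C_\pi = Y_\pi \circ X_n$, and (writing $\pi = [a_1\; \cdots\; a_n]$) the map $\pi \mapsto Y_\pi = (0\; a_n\; a_{n-1}\; \cdots\; a_1)$ is a bijection from $\textsf{S}_n$ onto the set of all $(n+1)$-cycles on $\{0,1,\ldots,n\}$, since $(0\; c_1\; \cdots\; c_n)$ is the generic $(n+1)$-cycle. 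Hence a permutation $\pi$ with $C_\pi = D\circ(0\;n)$ exists — and is then unique — exactly when
\[
 Y_\pi \;=\; D \circ (0\;n) \circ X_n^{-1} \;=\; D \circ E
\]
is again an $(n+1)$-cycle, where $E := (0\;n)\circ X_n^{-1} = (n\; n-1\; \cdots\; 2\; 1)$ is a \emph{fixed} $n$-cycle on $\{1, 2, \ldots, n\}$. Therefore the number sought equals the number of $n$-cycles $D$ on $\{0, 1, \ldots, n-1\}$ for which $D \circ E$ is an $(n+1)$-cycle.

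This is now a cycle-factoring question: in how many ways does a long cycle on $\{0,1,\ldots,n\}$ factor as the product of the prescribed $n$-cycle $E$ with a variable $n$-cycle $D$ fixing $n$. A short Euler-characteristic computation for such a factorization shows it can occur only when $n$ is even — which reproves Lemma \ref{fullpileparity} — since the associated genus equals $(n-2)/2$ and must be a nonnegative integer; moreover, among all $D$ this is precisely the requirement that the factorization have maximal genus. I would then finish by invoking the enumeration of \cite{BertramWei} and \cite{cangelmi}, applied to this variant of the two-long-cycle factoring problem, which evaluates the count to $\tfrac{2(n-1)!}{n}$.

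The main obstacle is precisely this last enumeration. The crude estimate — taking a fixed fraction of the $\tfrac{2\,n!}{n+2}$ factorizations of $X_n$ into two $(n+1)$-cycles — does not succeed, because the event ``$C_\pi(0) = n$'' is not independent of the event ``$Y_\pi \circ X_n$ is a single cycle,'' and the cyclic symmetry obtained by conjugating by powers of $X_n$ is too weak on its own to isolate the count. The substance of the proof is therefore in extracting, from (or alongside) the Bertram--Wei and Cangelmi machinery, a factorization count that records the extra marked datum — a long cycle factored into two $n$-cycles with prescribed complementary fixed points, equivalently the value $C_\pi(0)$ — rather than merely the total number of factorizations into two long cycles.
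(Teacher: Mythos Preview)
Your reduction is correct and essentially coincides with the paper's first move: both you and the paper observe that $|\textsf{SP}(\pi)|=n-1$ forces $C_\pi$ to be an $(n+1)$-cycle of the shape $(0\;n\;\cdots)$, and that since $\pi\mapsto Y_\pi$ is a bijection onto $(n+1)$-cycles, the problem becomes a constrained factorization count. Your further rewriting $C_\pi = D\circ(0\;n)$ and $Y_\pi = D\circ E$ with $E=(n\;n-1\;\cdots\;1)$ is clean and equivalent to the paper's formulation as ``factorizations of $X_n$ into two $(n+1)$-cycles whose right factor begins $(0\;n\;\cdots)$.''

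The gap is exactly where you say it is, and it is the entire content of the theorem. Neither \cite{BertramWei} nor \cite{cangelmi} gives you, off the shelf, the count of $n$-cycles $D$ fixing $n$ such that $D\circ E$ is an $(n+1)$-cycle when $E$ is an $n$-cycle fixing a \emph{different} point; the ``marked datum'' you correctly identify as the obstruction is not handled by a direct citation. The paper does real work here: it partitions the desired set as $B=\bigsqcup_{i=1}^{n-1}B_i$ according to the value $C_\pi(n)=i$, and then builds explicit injections $A\to B_1\to A$ (Lemmas \ref{firstTransform} and \ref{B1toAmap}) and a cyclic chain of injections $B_1\to B_2\to\cdots\to B_{n-1}\to B_1$ (Lemma \ref{circleTransform}), all by conjugation and multiplication by short cycles. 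This forces $|B_1|=\cdots=|B_{n-1}|=|A|$, where $A$ is the set of factorizations of $X_{n-2}$ into two $(n-1)$-cycles, and only then does Cangelmi's formula $|A|=\tfrac{2(n-2)!}{n}$ apply to give $|B|=(n-1)|A|=\tfrac{2(n-1)!}{n}$. Your proposal stops precisely before this construction; what remains is not a lookup but the substance of the argument.
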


As noted in the proof of Lemma \ref{fullpileparity}, an element of $\textsf{S}_n$ having a strategic pile of size $n-1$ is related to the possibility of factoring certain $(n+1)$-cycles into two $(n+1)$-cycles. As a result, to prove Theorem \ref{fullStratPile}, we first introduce some  additional notation, which we will use to define injective maps between sets of factorizations.

\begin{notation}
\hspace{1cm}
\begin{itemize}
\item Let $A$ denote the set of all factorizations of $X_{n-2}$ into two $(n-1)$-cycles.
\item Let $B$ denote the set of all factorizations of $X_n$ into two $(n+1)$-cycles where the right-most factor is of the form $(0 \; n \;  \cdots)$.
\item Let $B_i$ denote the subset of $B$ whose elements have right-most factors of form $(0 \; n \; i \; \cdots)$.
\item Define $\lambda_{n}=(0\;n\;1)$ and $c_{n}=(1\;2\;\cdots\;n-1)$.
\end{itemize}
\end{notation}

We will begin by constructing a bijection between the sets $A$ and $B_1$ in Lemmas \ref{firstTransform} and \ref{B1toAmap}. We will then show in Lemma \ref{circleTransform} that there is an injection from $B_i$ to $B_{i+1}$ for every $1 \leq i \leq n-1$, where this subscript addition is done modulo $n-1$, with $n-1$ as the additive identity; in other words, we will show $B_1 \rightarrow B_2, \; B_2 \rightarrow B_3, \ldots, \; B_{n-2} \rightarrow B_{n-1}, \text{ and } B_{n-1} \rightarrow B_1$, where $\rightarrow$ indicates an injective map. For a graphical depiction of these maps and sets, see Figure \ref{fig:diagram}. Since the injective maps between the $B_i$ sets form a cycle, it will follow that $B_i$ has the same cardinality for every $1 \leq i \leq n-1$. As a consequence, we will get that $\vert A \vert = \vert B_1 \vert = \cdots = \vert B_{n-1} \vert$. Finally, we can determine $|A|$ using the following prior result that counts the number of factorizations of an arbitrary $(n-1)$-cycle into two cycles of length $n-1$:

\begin{lemma}[\cite{cangelmi}] \label{cangelmi}
Let $\sigma \in \textsf{S}_{n-1}$ be an even $(n-1)$-cycle. Then the number of factorizations of $\sigma$ into two $(n-1)$-cycles is $$\frac{2(n-2)!}{n}.$$
\end{lemma}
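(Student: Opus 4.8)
The plan is to count the ordered factorizations $\sigma = \alpha \circ \beta$ directly via the Frobenius character formula, since the quantity sought is exactly a connection coefficient of the symmetric group $\textsf{S}_{n-1}$. First I would observe that the count is invariant under conjugation, so $\sigma$ may be taken to be any fixed even $(n-1)$-cycle, and I would record the parity remark that an $(n-1)$-cycle is even precisely when $n$ is even (matching the hypothesis). Writing $C$ for the conjugacy class of $(n-1)$-cycles in $\textsf{S}_{n-1}$, the number of ordered pairs $(\alpha,\beta)\in C\times C$ with $\alpha\circ\beta=\sigma$ is
\begin{equation*}
\frac{|C|^2}{|\textsf{S}_{n-1}|}\sum_{\chi}\frac{\chi(C)^3}{\chi(1)},
\end{equation*}
where $\chi(C)$ is the common value of $\chi$ on $C$ and the sum runs over the irreducible characters $\chi$ of $\textsf{S}_{n-1}$ (all real-valued). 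Here $|C|=(n-2)!$ and $|\textsf{S}_{n-1}|=(n-1)!$, so the prefactor equals $(n-2)!/(n-1)$.

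The key input is the value of the irreducible characters of $\textsf{S}_{n-1}$ on a full $(n-1)$-cycle. By the Murnaghan--Nakayama rule, $\chi^{\lambda}$ vanishes on an $(n-1)$-cycle unless the Young diagram of $\lambda$ is a single rim hook, that is, a hook $\lambda=(n-1-r,1^{r})$, in which case $\chi^{\lambda}(C)=(-1)^{r}$ and $\chi^{\lambda}(1)=\binom{n-2}{r}$. Consequently $\chi(C)^{3}=(-1)^{3r}=(-1)^{r}$, and the entire character sum collapses to
\begin{equation*}
\sum_{r=0}^{n-2}\frac{(-1)^{r}}{\binom{n-2}{r}}.
\end{equation*}

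It remains to evaluate this alternating reciprocal-binomial sum, which I expect to be the main technical obstacle. Writing $m=n-2$, I would apply the Beta-integral identity $1/\binom{m}{r}=(m+1)\int_{0}^{1}t^{r}(1-t)^{m-r}\,dt$ to rewrite the sum as a single integral, sum the resulting finite geometric series inside the integral, and integrate term by term; this produces the closed form $\sum_{r=0}^{m}(-1)^{r}/\binom{m}{r}=\frac{m+1}{m+2}\bigl(1+(-1)^{m}\bigr)$. Because $n$ is even, $m=n-2$ is even, so the bracket equals $2$ and the sum equals $2(n-1)/n$. Multiplying by the prefactor $(n-2)!/(n-1)$ yields $2(n-2)!/n$, as claimed. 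As a sanity check I would verify small cases, e.g. $n=4$, where the only factorization of the $3$-cycle $\sigma$ into two $3$-cycles is $\sigma=(1\;3\;2)\circ(1\;3\;2)$, matching the value $1$.

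Finally, I would note an alternative combinatorial viewpoint that explains the parity condition conceptually: the relation $\alpha\circ\beta\circ\sigma^{-1}=\mathrm{id}$ together with the Euler-characteristic count $c(\alpha)+c(\beta)+c(\sigma^{-1})=(n-1)+2-2g$ forces $g=(n-2)/2$, so these are exactly the maximal-genus (one-face) factorizations of a full cycle, and an integer genus requires $n$ even. Such factorizations admit bijective enumerations, which would give a more combinatorial proof; however, the character computation above is the most direct route to the exact count $\frac{2(n-2)!}{n}$.
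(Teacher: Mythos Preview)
Your argument is correct: the Frobenius connection-coefficient formula, the Murnaghan--Nakayama evaluation on full cycles (only hooks survive, with value $(-1)^r$ and degree $\binom{n-2}{r}$), and the Beta-integral evaluation of $\sum_{r=0}^{m}(-1)^r/\binom{m}{r}=\frac{m+1}{m+2}(1+(-1)^m)$ all check out, and your sanity check at $n=4$ is right. The genus remark is also a nice way to see the parity obstruction.

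There is nothing to compare against in this paper, however: the lemma is quoted from Cangelmi \cite{cangelmi} and is not re-proved here---it is used as a black box to determine $|A|$ in the proof of Theorem~\ref{fullStratPile}. So your character-theoretic computation is not ``the same'' or ``different'' from the paper's approach; it simply supplies a proof where the paper gives a citation. (For what it is worth, Cangelmi's original paper proceeds combinatorially rather than via characters, so your route is a genuine alternative to the cited source, but either way the present paper relies only on the statement.)
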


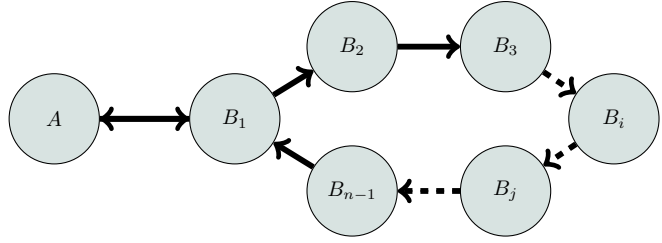
\begin{figure}
\centering
\captionsetup{width=.7\linewidth}
\captionsetup{justification=centering}
\begin{tikzpicture}[x = 4cm, y = 4cm, scale=.3]

       \node (L11) at (-2.5,1.5) [label]{};
       \node (L12) at (-2.5,-1.5)[label]{};
       \node (R11) at (-1.5,1.5)[label]{};
       \node (R12) at (-1.5,-1.5)[label]{};

       \vertex (c0) at (-2,0)[label,minimum size = 1.5cm,scale=.8,fill=lightgrey]{$A$}; 

       \node (L21) at (-0.5,1.5) [label]{};
       \node (L22) at (-0.5,-1.5)[label]{};
       \node (R21) at (3.0,1.5)[label]{};
       \node (R22) at (3.0,-1.5)[label]{};

       \vertex (c1) at (0,0)[label,minimum size = 1.5cm,scale=.8,fill=lightgrey] {$B_1$};
       \vertex (c2) at (1.3,0.8)[label,minimum size = 1.5cm,scale=.8,fill=lightgrey] {$B_2$};
       \vertex (c3) at (3.0,0.8)[label,minimum size = 1.5cm,scale=.8,fill=lightgrey] {$B_3$};
       \vertex (c4) at (4.2,0)[label,minimum size = 1.5cm,scale=.8,fill=lightgrey] {$B_i$};
       \vertex (c5) at (3.0,-0.8)[label,minimum size = 1.5cm,scale=.8,fill=lightgrey] {$B_j$};
       \vertex (c6) at (1.3,-0.8)[label,minimum size = 1.5cm,scale=.8,fill=lightgrey] {$B_{n-1}$};

\path


        (c0)  edge [style={->,double=black, thick}](c1)
        (c1) edge [style={->,double=black, thick}](c0)
        (c1) edge [style={->,double=black, thick}](c2)	
        (c2) edge [style={->, double=black, thick}](c3)
        (c3) edge [style={->, dashed, double=black, thick}](c4)
        (c4) edge [style={->,dashed, double=black, thick}](c5)	
       (c5) edge [style={->,dashed, double=black, thick}](c6)
       (c6) edge [style={->,double=black, thick}](c1);
\end{tikzpicture}
\caption{A depiction of the transformations introduced in Lemmas \ref{firstTransform}, \ref{B1toAmap}, and \ref{circleTransform}}
\label{fig:diagram}
\end{figure}

We now establish the previously described injections.

\begin{lemma} \label{firstTransform}
There is an injective map from $A$ to $B_1$.
\end{lemma}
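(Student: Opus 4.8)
The goal is to take a factorization $X_{n-2} = \alpha \circ \beta$ with $\alpha, \beta$ being $(n-1)$-cycles and produce from it a factorization $X_n = \alpha' \circ \beta'$ into two $(n+1)$-cycles whose right-most factor $\beta'$ has the form $(0\; n\; 1\; \cdots)$, in a way that can be recovered. The natural idea is to ``extend'' each $(n-1)$-cycle on $\{1, \ldots, n-1\}$ to an $(n+1)$-cycle on $\{0, 1, \ldots, n\}$ by inserting the two new symbols $0$ and $n$ in controlled positions, using the fixed auxiliary cycles $\lambda_n = (0\;n\;1)$ and $c_n = (1\;2\;\cdots\;n-1)$ from the Notation block. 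Concretely, I expect the map to be something like $\beta \mapsto \beta' := \lambda_n \circ (\text{relabeled }\beta)$ or $\beta \mapsto \beta'$ defined by conjugating/composing so that $0$ and $n$ get threaded in right before the symbol $1$; since $X_{n-2}$ on $\{1,\dots,n-1\}$ is $(1\;2\;\cdots\;n-1) = c_n$ and $X_n = (0\;1\;\cdots\;n)$, one checks $X_n = \lambda_n \circ c_n$ (indeed $\lambda_n \circ c_n$ sends $n-1 \mapsto 1 \mapsto$ wait—$c_n$ fixes $0$ and $n$, $c_n(n-1)=1$, then $\lambda_n(1)=... $) — the precise bookkeeping of where to splice is exactly the routine part I will work out, but the upshot is an identity expressing $X_n$ as $\lambda_n$ composed with (the image of) $X_{n-2}$.

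The key steps, in order: (1) Write down the explicit map $\Phi: A \to B_1$ on a factorization $(\alpha, \beta)$, defining $\alpha' $ and $\beta'$ by a fixed rule that inserts $0$ and $n$; I anticipate one of the two factors absorbs $\lambda_n$ while the other is essentially $\beta$ with its cycle unchanged on $\{1,\dots,n-1\}$. (2) Verify $\Phi$ lands in $B_1$: check that $\alpha' \circ \beta' = X_n$ (this is where the identity $X_n = \lambda_n \circ c_n$ together with $\alpha \circ \beta = X_{n-2}$ gets used), that both $\alpha'$ and $\beta'$ are genuinely $(n+1)$-cycles, and that $\beta'$ begins $(0\;n\;1\;\cdots)$. (3) Prove injectivity: exhibit a left inverse — given $(\alpha',\beta') \in B_1$, delete/contract the symbols $0$ and $n$ (equivalently compose with $\lambda_n^{-1}$ on the appropriate side and restrict) to recover $(\alpha,\beta)$, and check this really returns the original pair. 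Injectivity should then be immediate because the construction in step (1) is reversible by design.

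The main obstacle I expect is step (2), specifically confirming that the inserted factor $\alpha'$ (or $\beta'$) remains a single $(n+1)$-cycle rather than breaking into smaller cycles — inserting two new points into a cycle can, depending on the rule, split it — so the insertion rule has to be chosen carefully (this is presumably why $\lambda_n$ and $c_n$ are singled out in advance). A secondary subtlety is making sure the image genuinely lies in $B_1$ and not merely in $B$: the rule must force the second entry of $\beta'$ to be $n$ and the third to be $1$. Once the right insertion rule is pinned down, both the ``well-defined into $B_1$'' check and the ``reversible, hence injective'' check should be short computations with the cycle notation; the cardinality count $|A| = \frac{2(n-2)!}{n}$ then comes for free from Lemma \ref{cangelmi} applied to the even $(n-1)$-cycle $X_{n-2}$, though that is used later rather than in this lemma itself.
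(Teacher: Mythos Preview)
Your high-level plan matches the paper's approach exactly: define an explicit map $A \to B_1$ by splicing the new symbols $0$ and $n$ into the two $(n-1)$-cycle factors using the auxiliary permutations $\lambda_n$ and $c_n$, then verify the image lies in $B_1$ and exhibit a left inverse. Where your sketch goes wrong is in the bookkeeping you flagged as ``routine''. You write that ``$X_{n-2}$ on $\{1,\dots,n-1\}$ is $(1\;2\;\cdots\;n-1) = c_n$'', but by the paper's definition $X_{n-2} = (0\;1\;\cdots\;n-2)$ acts on $\{0,1,\dots,n-2\}$, not on $\{1,\dots,n-1\}$; consequently your proposed identity $X_n = \lambda_n \circ c_n$ is false (that composition is $(0\;n\;1\;2\;\cdots\;n-1)$, not $(0\;1\;\cdots\;n)$). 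This matters because the factors $\gamma,\delta$ of $X_{n-2}$ live on $\{0,\dots,n-2\}$, so simply composing with $\lambda_n = (0\;n\;1)$ does not cleanly thread in the symbol $1$ --- it is already in the support.

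The fix, which is exactly what the paper does, is to \emph{conjugate} each factor by $c_n$ first: $c_n \circ \gamma \circ c_n^{-1}$ and $c_n \circ \delta \circ c_n^{-1}$ are $(n-1)$-cycles on $\{0,2,3,\dots,n-1\}$ (the symbol $1$ is now absent from the support), and then $\lambda_n$ can be applied on the outside of each to introduce $1$ and $n$. Concretely the paper sets $\gamma_1 := \lambda_n \circ c_n \circ \gamma \circ c_n^{-1}$ and $\delta_1 := c_n \circ \delta \circ c_n^{-1} \circ \lambda_n$, and the identity that replaces your $X_n = \lambda_n \circ c_n$ is
\[
\gamma_1 \circ \delta_1 \;=\; \lambda_n \circ \bigl(c_n \circ X_{n-2} \circ c_n^{-1}\bigr) \circ \lambda_n \;=\; \lambda_n \circ (0\;2\;3\;\cdots\;n-1) \circ \lambda_n \;=\; X_n.
\]
Once you make this correction, your steps (2) and (3) go through exactly as you outlined; in particular the $(n+1)$-cycle check is immediate because you are composing a $3$-cycle $(0\;n\;1)$ with an $(n-1)$-cycle whose support misses $1$ and $n$ but contains $0$.
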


\begin{proof}
Let $\gamma \circ \delta$ be a factorization in $A$. Namely, suppose 
$\gamma$ and $\delta$ are $(n-1)$-cycles satisfying $\gamma \circ \delta = X_{n-2}$. Define $\gamma_1$ and $\delta_1$ as follows:
\begin{align*}
\gamma_1 &:= \lambda_{n} \circ c_{n} \circ \gamma \circ (c_{n})^{-1}\\
\delta_1 &:= c_{n} \circ \delta \circ (c_{n})^{-1} \circ \lambda_{n}
\end{align*}
It suffices to show that $\gamma_1$ and $\delta_1$ are $(n+1)$-cycles, that $\delta_{1}$ is of the form $(0\;n\;1\;\cdots)$, and that $\gamma_1 \circ \delta_1 = X_n$.

Since conjugation preserves cycle structure, the factors $c_{n} \circ \gamma \circ (c_{n})^{-1}$ of $\gamma_1$ form an $(n-1)$-cycle with elements $\{0, 2, 3, \ldots, n-1\}$. Composing $\lambda_n$ with this $(n-1)$-cycle creates an $(n+1)$-cycle with elements $\{0, 1, 2, \ldots, n\}$.

Similarly, the factors $c_{n} \circ \delta \circ (c_{n})^{-1}$ of $\delta_1$ form an $(n-1)$-cycle with elements $\{0, 2, 3, \ldots, n-1\}$. Composing this $(n-1)$-cycle with $\lambda_n$ adds the elements $n$ and $1$ to form an $(n+1)$-cycle of the form $(0\;n\; 1\;\cdots)$.

Finally,
\begin{align*}
\gamma_1 \circ \delta_1 &= (\lambda_{n} \circ c_{n} \circ \gamma \circ (c_{n})^{-1}) \circ (c_{n} \circ \delta \circ (c_{n})^{-1} \circ \lambda_{n}) \\
&= \lambda_{n} \circ c_{n} \circ X_{n-2} \circ (c_{n})^{-1} \circ \lambda_{n} \\
&= \lambda_n \circ (0\;2\;3\;\cdots\;n-1) \circ \lambda_n = X_{n}.
\end{align*}
\end{proof}

\begin{example} \label{ex:maps}
Let $n=6$, which gives $X_{n-2} = X_4 = (0 \; 1 \; 2 \; 3 \; 4)$. Consider the factorization 
$$X_4 = (0 \; 1 \; 2 \; 3 \; 4) = \underbrace{(0 \; 2 \; 4 \; 1 \; 3)}_{\text{$\gamma$}} \underbrace{(0 \; 4 \; 3 \; 2 \; 1)}_{\text{$\delta$}}.$$
Using the maps defined in Lemma \ref{firstTransform}, we get
$$\gamma_1 = \lambda_6 \circ c_6 \circ \gamma \circ (c_6)^{-1} = (0 \; 6 \; 1)(1 \; 2 \; 3 \; 4 \; 5)(0 \; 2 \; 4 \; 1 \; 3)(5 \; 4 \; 3 \; 2 \; 1) = (0 \; 3 \; 5 \; 2 \; 4 \; 6 \; 1)$$
and 
$$\delta_1 = c_6 \circ \delta \circ (c_6)^{-1} \circ \lambda_6 = (1 \; 2 \; 3 \; 4 \; 5)(0 \; 4 \; 3 \; 2 \; 1)(5 \; 4 \; 3 \; 2 \; 1)(0 \; 6 \; 1) = (0 \; 6 \; 1 \; 5 \; 4 \; 3 \; 2).$$
Note that these are $(n+1)$-cycles, that $\delta_1$ is of the form $(0 \; n \; 1 \; \cdots)$, and that 
$$\gamma_1 \circ \delta_1 = (0 \; 3 \; 5 \; 2 \; 4 \; 6 \; 1)(0 \; 6 \; 1 \; 5 \; 4 \; 3 \; 2) = (0 \; 1 \; 2 \; 3 \; 4 \; 5 \; 6) = X_n,$$
as desired.
\end{example}

\begin{lemma} \label{B1toAmap}
There is an injective map from $B_1$ to $A$. 
\end{lemma}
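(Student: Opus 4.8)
The plan is to exhibit an explicit map $\Psi\colon B_1\to A$ by algebraically inverting the map of Lemma~\ref{firstTransform}, and then to verify that $\Psi$ is well defined and injective. Given a factorization $\gamma_1\circ\delta_1 = X_n$ lying in $B_1$, solving the equations $\gamma_1 = \lambda_n\circ c_n\circ\gamma\circ(c_n)^{-1}$ and $\delta_1 = c_n\circ\delta\circ(c_n)^{-1}\circ\lambda_n$ for $\gamma$ and $\delta$ leads one to set
\[
\gamma := (c_n)^{-1}\circ\lambda_n^{-1}\circ\gamma_1\circ c_n,\qquad \delta := (c_n)^{-1}\circ\delta_1\circ\lambda_n^{-1}\circ c_n .
\]
It then suffices to show that $\gamma$ and $\delta$ are $(n-1)$-cycles on $\{0,1,\dots,n-2\}$, that $\gamma\circ\delta = X_{n-2}$ so that $\gamma\circ\delta$ is a bona fide element of $A$, and that $\Psi$ is one-to-one.

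The composition identity is the routine part. Cancelling the inner $c_n\circ(c_n)^{-1}$ and using $\gamma_1\circ\delta_1 = X_n$ gives $\gamma\circ\delta = (c_n)^{-1}\circ\lambda_n^{-1}\circ X_n\circ\lambda_n^{-1}\circ c_n$. The identity $\lambda_n\circ(0\;2\;3\;\cdots\;n-1)\circ\lambda_n = X_n$ already established in the proof of Lemma~\ref{firstTransform} rearranges to $\lambda_n^{-1}\circ X_n\circ\lambda_n^{-1} = (0\;2\;3\;\cdots\;n-1)$, and conjugating this $(n-1)$-cycle by $(c_n)^{-1}$ relabels its entries $0,2,3,\dots,n-1$ as $0,1,2,\dots,n-2$, producing exactly $X_{n-2}$.

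The substantive step — and the one I expect to be the main obstacle — is checking that $\gamma$ and $\delta$ are genuine $(n-1)$-cycles rather than products of shorter cycles; this is precisely where membership in $B_1$ (as opposed to an arbitrary factorization of $X_n$) is used. Because $\delta_1$ has the form $(0\;n\;1\;\cdots)$ and $\gamma_1 = X_n\circ\delta_1^{-1}$, one computes $\gamma_1(n)=1$ and $\gamma_1(1)=0$, so in cycle notation $\gamma_1 = (0\;u_1\;\cdots\;u_{n-3}\;n\;1)$ with $\{u_1,\dots,u_{n-3}\} = \{2,\dots,n-1\}$; composing on the left with $\lambda_n^{-1} = (0\;1\;n)$ then strips off the trailing $n\;1$ and fixes $1$ and $n$, so $\lambda_n^{-1}\circ\gamma_1 = (0\;u_1\;\cdots\;u_{n-3})$ is an $(n-1)$-cycle on $\{0,2,3,\dots,n-1\}$. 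Writing $\delta_1 = (0\;n\;1\;v_1\;\cdots\;v_{n-3})$ and composing on the right with $\lambda_n^{-1}$ similarly yields $\delta_1\circ\lambda_n^{-1} = (0\;v_1\;\cdots\;v_{n-3})$, again an $(n-1)$-cycle on $\{0,2,3,\dots,n-1\}$. Conjugating each of these by $(c_n)^{-1}$, which carries $\{0,2,3,\dots,n-1\}$ bijectively onto $\{0,1,2,\dots,n-2\}$, turns them into $(n-1)$-cycles on $\{0,1,\dots,n-2\}$, so $\gamma$ and $\delta$ are of the required type and $\Psi$ indeed maps $B_1$ into $A$.

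Injectivity is then immediate: the formulas defining $\gamma$ and $\delta$ merely compose and conjugate $\gamma_1,\delta_1$ by fixed permutations, and applying the formulas of Lemma~\ref{firstTransform} to $\gamma$ and $\delta$ recovers $\gamma_1$ and $\delta_1$ verbatim, so distinct factorizations in $B_1$ have distinct images. This moreover shows that $\Psi$ is a two-sided inverse of the map of Lemma~\ref{firstTransform}, so Lemmas~\ref{firstTransform} and~\ref{B1toAmap} together yield a bijection between $A$ and $B_1$.
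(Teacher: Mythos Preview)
Your proof is correct and follows essentially the same approach as the paper: you define the same inverse maps $\gamma = (c_n)^{-1}\circ\lambda_n^{-1}\circ\gamma_1\circ c_n$ and $\delta = (c_n)^{-1}\circ\delta_1\circ\lambda_n^{-1}\circ c_n$, use the same key observation that membership in $B_1$ forces $\gamma_1$ to end $\cdots\,n\;1$ so that $\lambda_n^{-1}$ splits off $1$ and $n$ as fixed points, and verify $\gamma\circ\delta = X_{n-2}$ by the same conjugation calculation. One trivial typo: your $(n+1)$-cycles should be written with $u_1,\dots,u_{n-2}$ and $v_1,\dots,v_{n-2}$ rather than through index $n-3$, since $\{2,\dots,n-1\}$ has $n-2$ elements.
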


\begin{proof}
Let $\gamma_1 \circ \delta_1 = (0 \; t_1 \; t_2 \; \cdots \; t_{n})(0 \; n \; 1 \; v_1 \; \cdots \; v_{n-2})$ be an arbitrary factorization in $B_1$. It suffices to show that we can recover from $\gamma_1$ and $\delta_1$ a factorization $\gamma \circ \delta$ of $X_{n-2}$ in $A$. Let $\delta := (c_{n})^{-1} \circ \delta_1 \circ (\lambda_{n})^{-1} \circ c_{n}$. Then,
\begin{align*}
\delta &= (c_{n})^{-1} \circ (0 \; n \; 1 \; v_1 \; \cdots \; v_{n-2}) \circ (\lambda_{n})^{-1} \circ c_{n} \\
&= (c_n)^{-1} \circ (0\; v_1 \; \cdots \; v_{n-2})(1)(n) \circ c_n \\ 
&= (0\; v_1-1\; \cdots \; v_{n-2}-1).
\end{align*}
It follows that $\delta$ is an $(n-1)$-cycle.

Since $\gamma_1 = (0\; t_1\; t_2 \; \cdots \; t_{n})$ and $\gamma_1 \circ \delta_1 = X_n$, we have that $t_n = 1$ and $t_{n-1}=n$. Let $\gamma = (c_{n})^{-1} \circ (\lambda_{n})^{-1} \circ \gamma_1 \circ c_{n}$. Then,
\begin{align*}
\gamma &= (c_n)^{-1} \circ (\lambda_n)^{-1} \circ (0 \; t_1 \; \cdots \; t_{n-2} \; n \; 1) \circ c_n = (c_n)^{-1} \circ (0\; t_1\; \cdots\; t_{n-2})(n)(1) \circ c_n.
\end{align*}
Since conjugation preserves cycle structure, $\gamma$ is an $(n-1)$-cycle.

Finally,
\begin{align*}
\gamma \circ \delta &= (c_n{}^{-1} \circ \lambda_n{}^{-1} \circ \gamma_1 \circ c_n) \circ (c_n{}^{-1} \circ \delta_1 \circ \lambda_n{}^{-1}\circ c_n)\\
&= c_n{}^{-1} \circ \lambda_n{}^{-1} \circ X_n \circ \lambda_n{}^{-1} \circ c_n = X_{n-2}.
\end{align*}

We have shown that $\gamma$ and $\delta$ are $(n-1)$-cycles and that $\gamma \circ \delta = X_{n-2}$. It follows that $\gamma \circ \delta \in A$, and this completes the proof.
\end{proof}

Since the injective maps defined in the proof of Lemma \ref{B1toAmap} are merely inverses of those defined in the proof of Lemma \ref{firstTransform}, these maps in fact serve as bijective maps between the sets $A$ and $B_1$. It follows that $\vert A \vert = \vert B_1 \vert$. The next lemma will function to show that $\vert B_1 \vert = \cdots = \vert B_{n-1} \vert$.

\begin{lemma} \label{circleTransform}
\hspace{1cm}
\begin{enumerate}
\item For every $1 \leq i \leq n-2$, there is an injection from $B_i$ to $B_{i+1}$.
\item There is an injection from $B_{n-1}$ to $B_1$.
\end{enumerate}
In other words, $B_1 \rightarrow B_2 \rightarrow \cdots \rightarrow B_{n-1} \rightarrow B_1$, where each $\rightarrow$ indicates an injective map.
\end{lemma}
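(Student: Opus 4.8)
The plan is to produce, for each $i$, an explicit injection $B_i \to B_{i+1}$ — with the subscript read modulo $n-1$ exactly as in the statement, so that the case $i = n-1$ means $B_{n-1}\to B_1$ — and then to chain these around the cycle $B_1 \to B_2 \to \cdots \to B_{n-1} \to B_1$ to conclude that all the $B_i$ have equal cardinality. Each map will be obtained by conjugating the right-hand factor by the cycle $c_n = (1\;2\;\cdots\;n-1)$, in the spirit of the maps in Lemmas \ref{firstTransform} and \ref{B1toAmap}. Concretely, given a factorization $\gamma_1\circ\delta_1$ in $B_i$, I would set $\delta_1' := c_n\circ\delta_1\circ c_n^{-1}$ and $\gamma_1' := X_n\circ(\delta_1')^{-1}$, and then show that $\gamma_1'\circ\delta_1'$ is a factorization lying in $B_{i+1}$.

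Verifying this claim splits into three parts. First, $\gamma_1'\circ\delta_1' = X_n$ holds by the definition of $\gamma_1'$. Second, $\delta_1'$ is an $(n+1)$-cycle of the form $(0\;n\;i+1\;\cdots)$, where ``$i+1$'' is read cyclically so that it is $1$ when $i = n-1$: conjugation preserves cycle type, $c_n$ fixes both $0$ and $n$, and $c_n(i) = i+1$ for $1\le i\le n-2$ while $c_n(n-1) = 1$, so conjugating $\delta_1 = (0\;n\;i\;\cdots)$ by $c_n$ yields precisely $(0\;n\;c_n(i)\;\cdots)$. Third, and most substantially, $\gamma_1'$ is an $(n+1)$-cycle. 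Here I would first record the two identities forced by $\gamma_1\circ\delta_1 = X_n$ together with $\delta_1 \in B_i$: namely $\gamma_1(n) = 1$ and $\gamma_1(i) = 0$, which hold because $\gamma_1(\delta_1(0)) = X_n(0) = 1$ and $\gamma_1(\delta_1(n)) = X_n(n) = 0$ with $\delta_1(0) = n$ and $\delta_1(n) = i$. Then, using $\delta_1^{-1} = X_n^{-1}\circ\gamma_1$, one rewrites $\gamma_1' = X_n\circ c_n\circ\delta_1^{-1}\circ c_n^{-1} = (X_n c_n X_n^{-1})\circ\gamma_1\circ c_n^{-1} = (2\;3\;\cdots\;n)\circ\gamma_1\circ c_n^{-1}$, and analyzes how pre-composing $\gamma_1$ with $c_n^{-1}$ and post-composing with $(2\;3\;\cdots\;n)$ splits and reconnects the single cycle of $\gamma_1$, using the known images of $i$ and of $n$ to locate where the reconnection occurs and conclude that a single $(n+1)$-cycle results.

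Injectivity of each map is then immediate, since it is reversed by conjugating $\delta_1'$ back by $c_n^{-1}$ and recomputing $\gamma_1 = X_n\circ\delta_1^{-1}$; running the third step above with $c_n^{-1}$ in place of $c_n$ even shows that each map is a bijection $B_i \leftrightarrow B_{i+1}$ and that the composite around the full cycle is the identity (because $c_n^{\,n-1}$ is the identity). In any case, chaining the injections yields $\lvert B_1\rvert = \lvert B_2\rvert = \cdots = \lvert B_{n-1}\rvert$, which is what is needed. I expect the third part of the verification to be the genuine obstacle: one cannot simply conjugate the whole factorization, as that would move the product away from $X_n$, which forces the asymmetric recipe ``conjugate $\delta_1$, then recompute $\gamma_1$'' — and it is not automatic that the recomputed $\gamma_1'$ is a single cycle. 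Indeed, a naive variant such as conjugating only by the transposition $(i\;i+1)$ does send $\delta_1'$ into $B_{i+1}$ but breaks $\gamma_1'$ into several cycles, so the argument must make essential use of the special positions of $0$, $1$, $i$, and $n$ within $\gamma_1$.
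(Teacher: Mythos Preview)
Your proposal is correct and is essentially the paper's own argument: since $r_n\circ c_n = (2\;3\;\cdots\;n)$, your recomputed factor $\gamma_1' = (2\;3\;\cdots\;n)\circ\gamma_1\circ c_n^{-1}$ is literally the paper's $\gamma_{i+1} = r_n\circ c_n\circ\gamma_i\circ c_n^{-1}$, and your $\delta_1' = c_n\delta_1 c_n^{-1}$ is its $\delta_{i+1}$. Your framing via $\gamma_1' := X_n\circ(\delta_1')^{-1}$ makes the product identity and bijectivity more transparent, and you are right that the only substantive check is that $\gamma_1'$ stays a single $(n{+}1)$-cycle---the paper records this as ``one can also check,'' and the constraint $\gamma_1(n)=1$ you isolate (equivalently $\alpha(n)=2$ for $\alpha=c_n\gamma_1 c_n^{-1}$) is exactly what makes left-multiplication by $r_n$ reconnect rather than split the cycle.
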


\begin{proof} We prove the two statements separately. 

\noindent \textbf{Proof of (1).} Let $i$ satisfy $1 \leq i \leq n-2$. Let $\gamma_i$ and $\delta_i$ be $(n+1)$-cycles, where $\delta_i$ is of the form $(0 \; n \; i \; \cdots)$, and where $\gamma_i\circ \delta_i = X_n$. Let $r_{n}$ denote the cycle $(2 \; 1 \; n)$. Define
\begin{align*}
\gamma_{i+1} &= r_n \circ c_{n} \circ \gamma_i \circ (c_{n})^{-1}\\
\delta_{i+1} &= c_{n} \circ \delta_i \circ (c_{n})^{-1}
\end{align*}
It suffices to show that $\gamma_{i+1}$ and $\delta_{i+1}$ are $(n+1)$-cycles, that $\delta_{i+1}$ is of the form $(0 \; n \; i+1 \; \cdots)$, and that $\gamma_{i+1} \circ \delta_{i+1} = X_n$.

Since conjugation preserves the cycle structure of a permutation, both $\delta_{i+1} = c_{n} \circ \gamma_i \circ (c_{n})^{-1}$ and $c_{n} \circ \delta_i \circ (c_{n})^{-1}$ are $(n+1)$-cycles.
One can also check that composition with $r_n$ does not affect the cycle structure of $c_{n} \circ \gamma_i \circ (c_{n})^{-1}$, meaning $\gamma_{i+1}$ is also an $(n+1)$-cycle.

Next, observe that
$$\delta_{i+1}(0) = c_n(\delta_{i}(0)) = c_n(n) = n,$$ 
and 
$$\delta_{i+1}(n) = c_n(\delta_{i}(n)) = c_n(i) = i+1.$$ 
Therefore, $\delta_{i+1}$ is of the form $(0\;n\;i+1\;\cdots)$.

Finally, 
\begin{align*}
\gamma_{i+1} \circ \delta_{i+1} &= (r_{n} \circ c_{n} \circ \gamma_{i} \circ (c_{n})^{-1}) \circ (c_{n} \circ \delta_{i} \circ (c_{n})^{-1})\\ 
&= r_{n} \circ c_{n} \circ X_{n} \circ (c_{n})^{-1}\\
&= r_n \circ (0 \; 2 \; 3 \; \cdots \; n-1 \; 1 \; n)\\
&= X_{n}.
\end{align*}

\noindent \textbf{Proof of (2).} Statement (2) follows from the observations that $\delta_1 \neq \delta_i \neq \delta_n$ for all $1 < i < n$, and that $\delta_{n} = \delta_{1}$. The latter observation follows directly from the fact that the order of $c_n$ in the group of permutations is $n-1$.
\end{proof}

\begin{repexample}{ex:maps}
One can check that under the $\delta_i \rightarrow \delta_{i+1}$ map defined in the proof of Lemma \ref{circleTransform}, we get 
\begin{align*}
\delta_1 &= (0 \; 6 \; 1 \; 5 \; 4 \; 3 \; 2) \rightarrow (0 \; 6 \; 2 \; 1 \; 5 \; 4 \; 3) \rightarrow (0 \; 6 \; 3 \; 2 \; 1 \; 5 \; 4) \rightarrow (0 \; 6 \; 4 \; 3 \; 2 \; 1 \; 5) \rightarrow (0 \; 6 \; 5 \; 4 \; 3 \; 2 \; 1) \rightarrow (0 \; 6 \; 1 \; 5 \; 4 \; 3 \; 2) = \delta_n.
\end{align*}
\end{repexample}

We now prove the main result of this section, Theorem \ref{fullStratPile}:

\begin{proof}[Proof of Theorem \ref{fullStratPile}]
Since $X_n = Y_{\pi}^{-1} \circ C_{\pi}$, we count the factorizations of $X_n$ into two $(n+1)$-cycles where the second factor has the form $(0 \; n \; \cdots )$. This is the sum $ \sum_{1 \leq i \leq n-1} \vert B_i \vert$. By Lemma \ref{cangelmi}, there are $\displaystyle \frac{2(n-2)!}{n}$ factorizations of $X_{n-2}$ with two $(n-1)$-cycles. In other words, $\displaystyle \vert A \vert = \frac{2(n-2)!}{n}$. By Lemmas \ref{firstTransform}, \ref{B1toAmap}, and \ref{circleTransform}, we have that $\vert A \vert = \vert B_1 \vert = \cdots = \vert B_{n-1} \vert$. It follows that for each even $n$, the number of permutations in $\textsf{S}_n$ with strategic pile size $n-1$ is 
$$\sum_{1 \leq i \leq n-1} \vert B_i \vert = (n-1) \vert A \vert = \frac{2(n-1)!}{n},$$
as desired.

\end{proof}

We will now prove a similar result for $n$ odd.

\subsection{Maximum Size Strategic Piles for Odd Values of $n$}\label{subsec:oddn}

\begin{thm}\label{thm:maxstrpileodd}
For each odd number $n$, the number of permutations in $\textsf{S}_n$ with strategic pile size $n-2$ is $2(n-2)!$.
\end{thm}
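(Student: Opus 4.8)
The plan is to reduce the statement to the even case, Theorem~\ref{fullStratPile}, applied to $\textsf{S}_{n-1}$. Fix an odd integer $n\ge 3$. First I would pin down the shape of $C_\pi$ when $|\textsf{SP}(\pi)|=n-2$. Writing the cycle of $C_\pi$ through $0$ and $n$ as in~(\ref{eqstrpile}), say $(0\;u_1\;\cdots\;u_j\;n\;b_1\;\cdots\;b_{n-2})$, its length is $j+n$, so $j\le 1$. If $j=1$ then $C_\pi$ would be an $(n+1)$-cycle; but $C_\pi$ is a product of two $(n+1)$-cycles, hence even, while for odd $n$ an $(n+1)$-cycle is odd --- exactly the parity obstruction of Lemma~\ref{fullpileparity}. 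Hence $j=0$ and
\begin{equation*}
C_\pi=(0\;n\;b_1\;\cdots\;b_{n-2})(m),
\end{equation*}
where $m$ is the unique fixed point of $C_\pi$; since $0$ and $n$ lie on the $n$-cycle, $m\in\{1,\dots,n-1\}$ automatically. Conversely, any $\pi$ with $C_\pi$ of this shape has $\textsf{SP}(\pi)=\{b_1,\dots,b_{n-2}\}$ of size $n-2$. So it suffices to count the permutations $\pi\in\textsf{S}_n$ with $C_\pi$ of this form, grouped by the value of $m$.

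Next I would construct, for each fixed $m\in\{1,\dots,n-1\}$, a bijection between $\{\pi\in\textsf{S}_n:\ C_\pi=(0\;n\;b_1\cdots b_{n-2})(m)\}$ and the set of permutations in $\textsf{S}_{n-1}$ whose strategic pile has the maximum size $n-2$; the underlying idea is to \emph{delete the fixed point $m$}. Let $\phi_m$ be the order-preserving bijection $\{0,1,\dots,n\}\setminus\{m\}\to\{0,1,\dots,n-1\}$, so that $\phi_m(0)=0$ and $\phi_m(n)=n-1$, and for a permutation $f$ of $\{0,\dots,n\}$ let $\mathrm{del}_m(f)$ be the permutation of $\{0,\dots,n\}\setminus\{m\}$ obtained from $f$ by bypassing $m$ (that is, $f^{-1}(m)\mapsto f(m)$, and $x\mapsto f(x)$ otherwise). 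Given $\pi$ as above, $C_\pi=Y_\pi\circ X_n$ fixes $m$ exactly when $Y_\pi(m+1)=m$ (since $X_n(m)=m+1$), so $m$ immediately follows $m+1$ in the cycle $Y_\pi$; conjugating $\mathrm{del}_m(Y_\pi)$ by $\phi_m$ then yields an $n$-cycle on $\{0,\dots,n-1\}$, which is $Y_{\pi'}$ for a unique $\pi'\in\textsf{S}_{n-1}$. The candidate inverse sends $(m,\pi')$ to the permutation $\pi$ with $Y_\pi$ obtained from $Y_{\pi'}$ by relabeling via $\phi_m^{-1}$ and then reinserting $m$ immediately after $m+1=\phi_m^{-1}(m)$.

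The crux is the compatibility of $\mathrm{del}_m$ with composition: \emph{if $g\circ f$ fixes $m$, then $\mathrm{del}_m(g\circ f)=\mathrm{del}_m(g)\circ\mathrm{del}_m(f)$}. This follows from a short case analysis using the identity $f(m)=g^{-1}(m)$, and it genuinely fails without the hypothesis, so this is the delicate step. Applying it with $f=X_n$ and $g=Y_\pi$ (so $g\circ f=C_\pi$ fixes $m$), together with the easy observation that $\phi_m$ conjugates $\mathrm{del}_m(X_n)$ to $X_{n-1}$, gives $C_{\pi'}=\phi_m\circ\mathrm{del}_m(C_\pi)\circ\phi_m^{-1}$, an $n$-cycle on $\{0,\dots,n-1\}$ with $C_{\pi'}(0)=n-1$; that is, $\pi'$ has strategic pile of size $n-2$. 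Running the same identity backwards shows the candidate inverse lands in the right set, and the two maps are mutually inverse because $m$ returns to its slot immediately after $m+1$. By Theorem~\ref{fullStratPile} applied to the even number $n-1$, there are $\tfrac{2(n-2)!}{n-1}$ permutations in $\textsf{S}_{n-1}$ with strategic pile of size $n-2$; summing over the $n-1$ values of $m$ gives $(n-1)\cdot\tfrac{2(n-2)!}{n-1}=2(n-2)!$, as claimed. I expect the main obstacle to be the bookkeeping around the delete/insert operation --- the composition identity for $\mathrm{del}_m$ and the verification that the relabeling carries $X_n$ to $X_{n-1}$ while preserving the roles of $0$ and $n$; the parity reduction and the closing arithmetic are routine.
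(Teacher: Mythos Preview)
Your proposal is correct and follows the paper's overall strategy: reduce to Theorem~\ref{fullStratPile} for $\textsf{S}_{n-1}$ via an $(n-1)$-to-$1$ correspondence. The implementations differ in viewpoint. The paper works in one-line notation: Lemma~\ref{lemma:oddntoeven} shows that such a $\pi$ has exactly one adjacency, and the projection/expansion maps $P$ and $E(\cdot,i)$ delete or insert that adjacency and relabel; the count follows since each full-pile $\mu\in\textsf{S}_{n-1}$ has $n-1$ preimages indexed by the \emph{position} of the adjacency. You work instead at the level of the cycle factorization $C_\pi=Y_\pi\circ X_n$: the paper's adjacency at value $x$ is exactly your fixed point $m=x$ of $C_\pi$, and your bijection deletes $m$ from each factor and conjugates by the order-preserving $\phi_m$, with the $n-1$ preimages now indexed by the \emph{value} $m$. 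Your composition identity $\mathrm{del}_m(g\circ f)=\mathrm{del}_m(g)\circ\mathrm{del}_m(f)$ when $g\circ f$ fixes $m$ is precisely what guarantees the strategic-pile structure is preserved under the map --- this is the content the paper records as Observations~\ref{lemma:projection} and~\ref{lemma:expansion} without proof. So your argument is a cycle-level rendering of the paper's one-line argument; it makes the verification that $C_{\pi'}$ has the required $(0\;n{-}1\;\cdots)$ shape more transparent, at the cost of the $\mathrm{del}_m$ bookkeeping you correctly flag as the main technical step.
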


A permutation $\pi = \lbrack a_1\; a_2\; \cdots \; a_n\rbrack$ is said to have an \emph{adjacency} if there is an index $i<n$ such that $a_{i+1} = a_i+1$.

\begin{lemma}\label{lemma:oddntoeven}
Let $n>1$ be an odd number and let $\pi$ be an element of $\textsf{S}_n$. If $\pi$ has a strategic pile of cardinality $n-2$, then $\pi$ has a single adjacency.
\end{lemma}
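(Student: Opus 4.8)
The plan is to read off the cycle type of $C_\pi$ from the hypothesis $\vert \textsf{SP}(\pi)\vert = n-2$, and then to match fixed points of $C_\pi$ with adjacencies of $\pi$. First I would note that, since $\textsf{SP}(\pi) \subseteq \{1, \ldots, n-1\}$ (it consists of the $b_i$'s appearing alongside $0$ and $n$ in (\ref{eqstrpile})) and since $\vert\textsf{SP}(\pi)\vert = n-2 \geq 1$, the entries $0$ and $n$ lie in a common cycle of $C_\pi$, and that cycle already contains the $n$ points $\{0,n\} \cup \textsf{SP}(\pi)$. Hence this cycle has length $n$ or $n+1$, and at most one further point of the ground set $\{0,1,\ldots,n\}$ remains. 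The length-$(n+1)$ case would make $C_\pi$ a single $(n+1)$-cycle, which is impossible for odd $n$ by the parity argument already used in the proof of Lemma~\ref{fullpileparity}: $C_\pi = Y_\pi \circ X_n$ is a product of two $(n+1)$-cycles, each an odd permutation since $n+1$ is even, so $C_\pi$ is even, whereas an $(n+1)$-cycle is odd. Therefore $C_\pi$ has cycle type $(n,1)$: one $n$-cycle, namely $(0\; n\; b_1\; \cdots\; b_{n-2})$, together with exactly one fixed point $m$, and $m \in \{1,\ldots,n-1\}$ because $0$ and $n$ lie in the $n$-cycle.

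Next I would set up the correspondence between adjacencies of $\pi = [a_1\; a_2\; \cdots\; a_n]$ and fixed points of $C_\pi$ lying in $\{1,\ldots,n-1\}$. Using $X_n(x) = x+1$ for $1 \leq x \leq n-1$ together with $Y_\pi(a_k) = a_{k-1}$ for $2 \leq k \leq n$ and $Y_\pi(a_1) = 0$ (read off from (\ref{eq:Y_pidef})), one checks: if $\pi$ has an adjacency at index $i$, so $a_{i+1} = a_i + 1$ with $1 \leq i \leq n-1$, then $x := a_i$ lies in $\{1,\ldots,n-1\}$ and $C_\pi(x) = Y_\pi(a_{i+1}) = a_i = x$, and distinct adjacencies give distinct values $a_i$. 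Conversely, if $x \in \{1,\ldots,n-1\}$ satisfies $C_\pi(x) = x$, write $x+1 = a_k$; then $C_\pi(x) = Y_\pi(a_k)$ equals $x \geq 1$, which forces $k \geq 2$, and hence $a_{k-1} = x = a_k - 1$ is an adjacency. Thus the adjacencies of $\pi$ are in bijection with the fixed points of $C_\pi$ in $\{1,\ldots,n-1\}$.

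Combining the two steps finishes the proof: $C_\pi$ has exactly one fixed point, namely $m$, and $m \in \{1,\ldots,n-1\}$, so $\pi$ has exactly one adjacency. The cycle-type determination in the first step is essentially immediate once the parity observation (the same one behind Lemma~\ref{fullpileparity} and Corollary~\ref{oddmaxpile}) is invoked; the only part that really calls for care is the index bookkeeping in the second step, namely handling the endpoint case $k=1$ and the special roles of $0$ and $n$ so that the two assignments are genuinely mutually inverse.
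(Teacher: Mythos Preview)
Your proof is correct and follows essentially the same approach as the paper: use the parity of $C_\pi = Y_\pi \circ X_n$ to rule out the $(n+1)$-cycle and force the cycle type $(n,1)$, then identify the unique fixed point of $C_\pi$ with an adjacency of $\pi$. If anything, your version is more complete than the paper's, since you explicitly verify both directions of the correspondence between adjacencies and fixed points in $\{1,\ldots,n-1\}$, whereas the paper only spells out that the fixed point yields an adjacency and leaves uniqueness implicit.
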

\begin{proof} 
Let $\pi = \lbrack a_1\; a_2\; \cdots \; a_n\rbrack$. We have that
\[
  C_{\pi} = (0\; a_n\; \cdots\; a_1)\circ (0\; 1\; \cdots n),
\]
which is an even permutation. It follows from Lemma \ref{fullpileparity} that $C_{\pi}$ is not a single cycle. Since $\pi$ has a strategic pile of cardinality $n-2$, we have that $C_{\pi}$ is of the form
\[
  C_{\pi} = (0\; n\; c_1\; \cdots\;c_{n-2})\circ(x).
\]
The singleton cycle $(x)$ comes about on account of the following configuration in the computation of $C_{\pi}$:
\[
 (\cdots \; x+1\; x\; \cdots) \circ (0\; 1\; \cdots\; x\; x+1\; \cdots \; n).
\]
Thus, in $\pi$ we have that for some $i$, $a_i = x$ and $a_{i+1} = x+1$.
\end{proof}

When a permutation $\pi$ in $\textsf{S}_n$ has a single adjacency, it can be projected to a unique corresponding permutation $P(\pi)$ in $\textsf{S}_{n-1}$ which has no adjacencies, as follows: Let $\pi = \lbrack a_1\; a_2\; \cdots \; a_i\; a_{i+1}\; \cdots \; a_n\rbrack \in \textsf{S}_n$ have the single adjacency $a_{i+1} = a_i+1$. We define $P(\pi)$ by removing the second element of the adjacency and reducing all large elements by 1. More precisely, $P({\pi}) = \lbrack a_1'\; a_2'\; \cdots\; a_{n-1}'\rbrack$, where
\[
  a_j' = \left\{\begin{tabular}{ll}
              $a_j$ & if $j\le i$ and $a_j\le a_i$\\
              $a_j-1$ & if $j\le i$ and $a_j>a_i$\\
              $a_{j+1}$ & if $n>j > i$ and $a_{j+1}<a_{i+1}$\\
              $a_{j+1}-1$ & if $n>j > i$ and $a_{j+1}\ge a_{i+1}$
             \end{tabular}
             \right.
\]

\begin{example}\label{ex:adj1}
The permutation $\pi = \lbrack 2\; 3\; 6\; 1\; 5\; 4 \rbrack$ has one adjacency. $P(\pi) = \lbrack 2\; 5\; 1\; 4\; 3 \rbrack$ has no adjacencies. Observe that there are $5$ different elements of $\textsf{S}_6$, each with a single adjacency, that give rise in this way to $\lbrack 2\; 5\; 1\; 4\; 3 \rbrack$, namely:
$\lbrack 2\; {\underline 3}\; 6\; 1\; 5\; 4 \rbrack$, $\lbrack 2\; 5\; {\underline 6}\; 1\; 4\; 3 \rbrack$, $\lbrack 3 \; 6\; 1\;  {\underline 2}\;  5\; 4 \rbrack$, $\lbrack 2\; 6\; 1\; 4\; {\underline 5}\; 3 \rbrack$, and $\lbrack 2\;  6\; 1\; 5\; 3\; {\underline 4}\; \rbrack$. 
\end{example}

\begin{observation}\label{lemma:projection}
If $n>1$ is an odd number and $\pi\in\textsf{S}_n$ is a permutation with a strategic pile of cardinality $n-2$, then $P({\pi})\in\textsf{S}_{n-1}$ is a permutation with a strategic pile of cardinality $n-2$.
\end{observation}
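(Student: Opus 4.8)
The plan is to show that $C_{P(\pi)}$ is obtained from $C_\pi$ by deleting the fixed point of $C_\pi$ and then relabelling the remaining $n$ points in an order-preserving way. Since the proof of Lemma~\ref{lemma:oddntoeven} shows $C_\pi = (0\; n\; c_1\; \cdots\; c_{n-2})\circ(x)$, i.e.\ an $n$-cycle together with a single fixed point, such a description of $C_{P(\pi)}$ immediately forces it to be an $n$-cycle, and tracking the image of $0$ will pin it down to the form $(0\; (n-1)\; \cdots)$, which is precisely a strategic pile of size $n-2$.

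Concretely, write $\pi = [a_1\; \cdots\; a_n]$ with its unique adjacency at position $i$, so that $a_i = x$ and $a_{i+1} = x+1$ for some $x$ with $1\le x\le n-1$ (here $x\ge 1$ since $x=\pi(i)$, and $x\le n-1$ since $x+1=\pi(i+1)\le n$), put $\rho := P(\pi)$, let $\tau := (0\; n\; c_1\; \cdots\; c_{n-2})$ with $c_1,\dots,c_{n-2}$ as in the proof of Lemma~\ref{lemma:oddntoeven}, and set $T := \{0,1,\ldots,n\}\setminus\{x\}$, so that $C_\pi$ restricts on $T$ to the $n$-cycle $\tau$. Define $f\colon\{0,\ldots,n\}\to\{0,\ldots,n-1\}$ by $f(v)=v$ for $v\le x$ and $f(v)=v-1$ for $v>x$; then the restriction $\bar f := f|_T$ is a bijection of $T$ onto $\{0,\ldots,n-1\}$. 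The key claim is that $C_\rho = \bar f\circ\tau\circ\bar f^{-1}$. Granting this, $C_\rho$ is an $n$-cycle on $\{0,\ldots,n-1\}$, so $0$ and $n-1$ lie in its unique cycle, and $C_\rho(0)=\bar f(\tau(\bar f^{-1}(0)))=\bar f(\tau(0))=\bar f(n)=n-1$; hence $C_\rho=(0\; (n-1)\; b_1\; \cdots\; b_{n-2})$ and $\textsf{SP}(P(\pi))$ has cardinality $n-2$.

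To prove the key claim I would establish two ``intertwining'' identities for the surjection $f$. First, comparing the cycles $X_n$ and $X_{n-1}$ gives $f(X_n(v))=X_{n-1}(f(v))$ for all $v\ne x$, the equality failing only at $v=x$, where $X_n$ steps across the glued pair $\{x,x+1\}$. Second --- and this is the main obstacle --- I would show $f(Y_\pi(v))=Y_\rho(f(v))$ for all $v\ne x+1$: the cyclic word $(0\; a_n\; \cdots\; a_1)$ representing $Y_\pi$ contains $x+1$ immediately followed by $x$, so applying $f$ entrywise and deleting the resulting repeated entry yields exactly the cyclic word $(0\; a'_{n-1}\; \cdots\; a'_1)$ representing $Y_\rho$ --- which is precisely what the definition of $P(\pi)$ records --- and the identity then follows by a short case check on successors in the cyclic word, the exception at $v=x+1$ again coming from $Y_\pi$ stepping across $\{x,x+1\}$. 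Since $X_n(v)=x+1$ only when $v=x$, composing the two identities gives $f\circ C_\pi = C_\rho\circ f$ on $T$, and restricting to $T$ (where $C_\pi$ acts as $\tau$) yields $C_\rho=\bar f\circ\tau\circ\bar f^{-1}$. The genuinely fiddly point is the $Y_\pi$ identity: one must treat separately the cases where the adjacency sits at the very beginning or end of $\pi$ and where $x=1$ or $x=n-1$, but each of these is a routine verification once successors in the cyclic word for $Y_\pi$ are tracked carefully.
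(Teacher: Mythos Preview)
Your argument is correct. The paper states this result as an ``Observation'' and provides no proof, so there is nothing to compare against directly; your approach---showing that $C_{P(\pi)}$ is obtained from the $n$-cycle part of $C_\pi$ by conjugating with the order-preserving bijection $\bar f\colon \{0,\ldots,n\}\setminus\{x\}\to\{0,\ldots,n-1\}$, and then reading off $C_{P(\pi)}(0)=n-1$---is a clean and rigorous way to fill in what the paper leaves implicit. The two intertwining identities you isolate, $f\circ X_n = X_{n-1}\circ f$ off $\{x\}$ and $f\circ Y_\pi = Y_{P(\pi)}\circ f$ off $\{x+1\}$, are exactly the right decomposition, and your observation that $X_n^{-1}(x+1)=x$ is what makes them compose cleanly on $T$. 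The edge cases you flag (adjacency at an end of $\pi$, or $x\in\{1,n-1\}$) are indeed routine once one tracks successors in the cyclic word for $Y_\pi$; the case analysis you sketch goes through.
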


Conversely, if we are given a permutation $\mu \in \textsf{S}_{n-1}$ which has no adjacencies, say $\lbrack a_1'\; a_2'\; \cdots \; a_{n-1}'\rbrack$, and any position $i$, we can construct a unique permutation $E(\mu,i) = \lbrack a_1\; a_2\; \cdots\; a_n\rbrack$ in $\textsf{S}_n$ which has a single adjacency, and for which $P(E(\mu,i)) = \mu$: Namely, define $a_{i+1}$ to be $a_i'+1$; for $j<i$ define $a_j = a_j'+1$ if $a_i<a_j$, and $a_j = a_j'$ otherwise; for $j>i$ define $a_{j+1} = a_j'$ if $a_j'<a_i'$, and $a_{j+1} = a_j'+1$ otherwise.

\begin{observation}\label{lemma:expansion}
If $n>1$ is an odd number and $\mu \in\textsf{S}_{n-1}$ is a permutation with a strategic pile of cardinality $n-2$, and if $i\le n-1$, then $E({\mu},i)\in\textsf{S}_{n}$ is a permutation with a strategic pile of cardinality $n-2$.
\end{observation}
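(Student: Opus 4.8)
The plan is to compute $C_{E(\mu,i)}$ directly in terms of $C_\mu$, observe that the two permutations differ only by a relabeling of symbols followed by conjugation by a single transposition, and then read the size of the strategic pile off the resulting cycle structure. Write $x := a_i'$ for the entry at position $i$ in $\mu = [a_1'\; \cdots\; a_{n-1}']$. Since $\mu$ has a strategic pile of cardinality $(n-1)-1$, the permutation $C_\mu = Y_\mu\circ X_{n-1}$ (which permutes $\{0,1,\dots,n-1\}$) is forced to be a single $n$-cycle with $n-1$ immediately following $0$, say $C_\mu = (0\; n-1\; b_1\; \cdots\; b_{n-2})$; moreover, were $\mu$ to have an adjacency $a_{j+1}' = a_j'+1$, then $a_j'$ would be a fixed point of $C_\mu$ (as in the proof of Lemma~\ref{lemma:oddntoeven}), contradicting that $C_\mu$ is an $n$-cycle, so $\mu$ has no adjacency and $E(\mu,i)$ is well defined.

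Next I would set up the comparison. Let $\phi\colon\{0,\dots,n-1\}\to\{0,\dots,n\}$ be the order-preserving injection whose image omits $x+1$ (so $\phi(k)=k$ for $k\le x$ and $\phi(k)=k+1$ for $k>x$), and for a permutation $\sigma$ of $\{0,\dots,n-1\}$ let $\widehat\sigma$ denote the permutation of $\{0,\dots,n\}$ that rewrites each symbol $k$ as $\phi(k)$ and fixes $x+1$. Let $\tau := (x\; x+1)\in\textsf{S}_{n+1}$. The engine of the argument is a pair of elementary ``insertion'' identities: if $\sigma$ is any permutation and $y$ is a fixed point of $\sigma$, then inserting $y$ immediately \emph{after} $x$ in the cycle of $\sigma$ through $x$ produces $\sigma\circ(x\; y)$, while inserting $y$ immediately \emph{before} $x$ produces $(x\; y)\circ\sigma$; each is checked argument by argument. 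Now put $\pi := E(\mu,i)$. From the definition of $E$, the one-line word of $\pi$ is obtained from that of $\mu$ by rewriting every entry $k$ as $\phi(k)$ and then splicing the new entry $x+1$ into position $i+1$, just after the entry $x$ at position $i$. Consequently $X_n = (0\; 1\; \cdots\; n)$ is $\widehat{X_{n-1}}$ with $x+1$ inserted just after $x$, and $Y_\pi = (0\; a_n\; \cdots\; a_1)$ is $\widehat{Y_\mu}$ with $x+1$ inserted just before $x$ --- on opposite sides of $x$, because $Y$ reads the one-line word in reverse.

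By the two insertion identities, $X_n = \widehat{X_{n-1}}\circ\tau$ and $Y_\pi = \tau\circ\widehat{Y_\mu}$, so that (rewriting by $\phi$ respects composition here, since $X_{n-1}$ and $Y_\mu$ both map $\{0,\dots,n-1\}$ to itself)
\[
C_\pi \;=\; Y_\pi\circ X_n \;=\; \tau\circ\widehat{Y_\mu}\circ\widehat{X_{n-1}}\circ\tau \;=\; \tau\circ\widehat{C_\mu}\circ\tau .
\]
Since $C_\mu = (0\; n-1\; b_1\; \cdots\; b_{n-2})$, the permutation $\widehat{C_\mu}$ is the $n$-cycle $(0\; \phi(n-1)\; \phi(b_1)\; \cdots\; \phi(b_{n-2}))$ on $\{0,\dots,n\}\setminus\{x+1\}$ together with the fixed point $x+1$, and $x = \phi(a_i')$ occurs somewhere in that cycle. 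Conjugating by $\tau = (x\; x+1)$ interchanges $x$ and $x+1$ throughout: the fixed point becomes $(x)$, the $n$-cycle now runs over $\{0,\dots,n\}\setminus\{x\}$, and a short case split ($x\le n-2$ versus $x=n-1$) gives $\tau(\phi(0))=0$ and $\tau(\phi(n-1))=n$, so that in $C_\pi$ the symbol $n$ immediately follows $0$ in this $n$-cycle. Hence $C_\pi = (0\; n\; c_1\; \cdots\; c_{n-2})(x)$ for suitable $c_1,\dots,c_{n-2}$, so $0$ and $n$ lie in a common cycle and $\textsf{SP}(E(\mu,i)) = \{c_1,\dots,c_{n-2}\}$ has cardinality $n-2$.

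I expect the main obstacle to be pinning down the identity $C_{E(\mu,i)} = \tau\circ\widehat{C_\mu}\circ\tau$ precisely: establishing the two insertion lemmas and, above all, correctly tracking that the freshly inserted entry $x+1$ lies \emph{after} $x$ as seen by $X_n$ but \emph{before} $x$ as seen by $Y_\pi$ --- which is exactly what forces both copies of $\tau$ onto the outside so that the composition telescopes. One must also be mildly careful with the boundary case $i=n-1$, where $x$ is the last entry of $\mu$, $\phi$ is the identity, and $\tau=(n-1\; n)$ itself moves $n$. Once that identity is in hand, the remainder is routine cycle bookkeeping of the same flavor as the proof of Lemma~\ref{lemma:oddntoeven}.
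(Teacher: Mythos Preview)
Your argument is correct. The paper states this result as an \emph{Observation} and offers no proof whatsoever, so there is nothing to compare your approach against; you are genuinely supplying the missing justification. The identity $C_{E(\mu,i)} = \tau\,\widehat{C_\mu}\,\tau$ with $\tau=(x\;x+1)$ is exactly the right engine, and your two insertion identities together with the observation that the hat construction respects composition make the telescoping transparent. The final cycle reading is also correct: in either case $\tau(\phi(0))=0$ and $\tau(\phi(n-1))=n$, so $0$ and $n$ are consecutive in the surviving $n$-cycle and the fixed point is $(x)$.

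One small wrinkle in your closing remarks: the boundary case you flag is mis-identified. The sentence ``the boundary case $i=n-1$, where $x$ is the last entry of $\mu$, $\phi$ is the identity, and $\tau=(n-1\;n)$'' conflates the \emph{position} $i=n-1$ with the \emph{value} $x=a_i'=n-1$. The map $\phi$ is the identity precisely when $x=n-1$, which can occur at any position $i$, not just the last one; conversely, taking $i=n-1$ does not force $x=n-1$. Fortunately your actual case split in the body (``$x\le n-2$ versus $x=n-1$'') is the correct one, and the computation goes through in both cases as you indicate, so this is only a wording issue in the concluding paragraph, not a gap in the proof.
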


With these facts at our disposal we now prove Theorem \ref{thm:maxstrpileodd}: 
\begin{proof}[Proof of Theorem \ref{thm:maxstrpileodd}]
By Observation \ref{lemma:expansion} each permutation $\mu \in\textsf{S}_{n-1}$ with full strategic pile produces $n-1$ permutations $\pi_i = E(\mu,i)$ for $i\le n-1$ in $\textsf{S}_{n}$ with strategic pile of size $n-2$. Thus by Theorem \ref{fullStratPile} there are at least $\displaystyle (n-1) \cdot \frac{2(n-2)!}{n-1} = 2(n-2)!$ elements of $\textsf{S}_n$ with strategic pile of size $n-2$. Conversely, by Lemma \ref{lemma:oddntoeven} each element of $\textsf{S}_n$ that has a strategic pile of size $n-2$ arises in this way.
\end{proof}

\section{Strategic Piles of Size $k$} \label{sec:pilesizek}

Having quantified the number of permutations with maximum size strategic piles, we next produce an analogous quantification for permutations with strategic piles of arbitrary size. Before stating the main result of this section, we first establish terminology and structural properties of permutations with strategic piles of size $k$. 

\subsection{Structure of Permutations with Strategic Pile of Size $k$}

\begin{prop}
\label{form}
For a permutation $\pi$ in $\textsf{S}_n$, $\textsf{SP}^{\;*}(\pi)=(b_1, b_2, \ldots, b_k)$ if and only if the following are true: 
\begin{enumerate}
\item $\pi(1) = b_k+1$. 
\item $\pi(n) = b_1$.
\item For all $j \in \{2, 3, \ldots, k-1\}$, the element $b_j$ appears to the immediate left of $b_{j-1}+1$ in $\pi$.
\end{enumerate}
\end{prop}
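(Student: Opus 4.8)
The plan is to unwind the definitions of $C_\pi$, $X_n$, and $Y_\pi$ and directly read off where each $b_j$ sits in $\pi$. Write $\pi = \lbrack a_1\; a_2\; \cdots\; a_n \rbrack$, so that $Y_\pi = (0\; a_n\; a_{n-1}\; \cdots\; a_1)$, and suppose the cycle of $C_\pi = Y_\pi \circ X_n$ containing $0$ and $n$ is $(0\; u_1\; \cdots\; u_j\; n\; b_1\; \cdots\; b_k)$. The key computational fact is that for any $m$ with $1 \le m \le n-1$, $X_n(m) = m+1$, and then $Y_\pi$ sends $m+1$ to the element immediately to its \emph{left} in the list $(a_n, a_{n-1}, \ldots, a_1, 0)$ — that is, to $a_{t-1}$ if $m+1 = a_t$ with $t \ge 2$, to $0$ if $m+1 = a_1$, and (reading cyclically) $Y_\pi(0) = a_n$. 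So $C_\pi(m)$ is "the entry of $\pi$ immediately to the left of $m+1$," with the convention that if $m+1$ is the first entry $a_1$ then $C_\pi(m) = 0$, and $C_\pi(n) = Y_\pi(X_n(n)) = Y_\pi(0) = a_n = \pi(n)$. Also $C_\pi(0) = Y_\pi(1)$, which is the entry immediately left of $1$, or $0$ itself if $a_1 = 1$.

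For the forward direction, I would trace the cycle $(0\; u_1\; \cdots\; u_j\; n\; b_1\; \cdots\; b_k)$ starting from $n$: since $C_\pi(n) = a_n = \pi(n)$ and $C_\pi(n) = b_1$, we immediately get (2), $\pi(n) = b_1$. Next, for $2 \le i \le k$ we have $C_\pi(b_{i-1}) = b_i$; by the description above this says $b_i$ is the entry immediately left of $b_{i-1}+1$ in $\pi$ — this is exactly (3) for $j \in \{2,\ldots,k-1\}$ after reindexing (and it also pins down the relationship that closes the cycle). Finally $C_\pi(b_k) = 0$, and by the convention this forces $b_k + 1 = a_1 = \pi(1)$, giving (1). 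One should double-check the edge cases $k = 1$ and $k = 2$ separately, since conditions (3) is then vacuous and the wrap-around of the cycle is governed entirely by (1) and (2).

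For the converse, assume (1)–(3) hold for some list $(b_1,\ldots,b_k)$ of distinct elements of $\{1,\ldots,n-1\}$. Using the same computation of $C_\pi$, conditions (2), (3), and (1) say precisely that $C_\pi(n) = b_1$, $C_\pi(b_{i-1}) = b_i$ for $2 \le i \le k$, and $C_\pi(b_k) = 0$. Hence $0, n, b_1, \ldots, b_k$ all lie in a single cycle of $C_\pi$, arranged as $(\cdots\; n\; b_1\; \cdots\; b_k\; 0\; \cdots)$; comparing with the canonical form \eqref{eqstrpile} shows the portion of that cycle after $n$ is exactly $(b_1,\ldots,b_k)$, so $\textsf{SP}^*(\pi) = (b_1,\ldots,b_k)$. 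The one thing that needs an argument is that the $b_j$'s constitute \emph{all} of the cycle-entries between $n$ and $0$ — i.e. that $C_\pi(b_k) = 0$ genuinely closes off the strategic pile rather than continuing — but this is immediate once we know $C_\pi(b_k) = 0$, since $0$ is where the cycle "restarts."

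The main obstacle is bookkeeping around the boundary conventions: the entry "immediately to the left of $m+1$" behaves differently when $m+1$ is the first entry of $\pi$ (yielding $0$) versus an interior entry, and the cyclic wrap $Y_\pi(0) = a_n$ interacts with the special roles of the entries $1$ and $n$. I would handle this by stating once, cleanly, the formula $C_\pi(m) = $ (left-neighbor of $m+1$ in $\pi$, or $0$ if $m+1 = \pi(1)$) for $1 \le m \le n$, proving it from \eqref{eq:X_ndef}–\eqref{eq:C_pidef}, and then letting conditions (1), (2), (3) fall out as the three structurally distinct transitions in the cycle: $n \mapsto b_1$ (uses $\pi(n)$), $b_{i-1}\mapsto b_i$ (uses the interior left-neighbor rule), and $b_k \mapsto 0$ (uses the $\pi(1)$ boundary rule).
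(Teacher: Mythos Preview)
Your proposal is correct and takes essentially the same approach as the paper: translate each transition in the cycle of $C_\pi$ containing $0$ and $n$ into a positional statement about $\pi$ via the computation $C_\pi(m) = Y_\pi(m+1)$. The paper carries out the three biconditionals (for $C_\pi(b_k)=0$, $C_\pi(n)=b_1$, and $C_\pi(b_{j-1})=b_j$) separately and slightly more tersely, whereas you package them under a single ``left-neighbor of $m{+}1$'' rule, but the argument is the same.
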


\begin{proof}
First note that $C_{\pi}(b_k) =0$ if and only if $Y_{\pi}(b_k+1)=0$, since $C_{\pi}(b_k)= Y_{\pi}(X(b_k))=Y_{\pi}(b_k+1)$. Also, by definition, $Y_{\pi}(b_k+1)=0$ if and only if $\pi(1) = b_k+1$. Therefore, $C_{\pi}(b_k)=0$ if and only if $\pi(1) = b_k + 1$.

Secondly, $C_{\pi}(n) = b_1$ if and only if  $Y_{\pi}(0) = b_1$, since $C_{\pi}(n) = Y_{\pi}(X(n)) = Y_{\pi}(0)$. Also, by definition, $Y_{\pi}(0) = b_1$ if and only if $\pi(n) = b_1$. Therefore, $C_{\pi}(n) = b_1$ if and only if $\pi(n) = b_1$.

Finally, for $j\in \{2, 3, \ldots, k-1\}$, $C_{\pi}(b_{j-1}) = b_j$ if and only if $Y_{\pi}(b_{j-1} + 1) = b_j$, since $C_{\pi}(b_{j-1}) = Y_{\pi}(X(b_{j-1})) = Y_{\pi}(b_{j-1} + 1)$. Also, by definition, $Y_{\pi}(b_{j-1} + 1) = b_j$ if and only if $b_j$ immediately precedes $b_{j-1} + 1$ in $\pi$. Therefore, $C_{\pi}(b_{j-1}) = b_j$ if and only if $b_j$ appears immediately to the left of $b_{j-1} +1$ in $\pi$.

Since $\textsf{SP}^*(\pi)=(b_1, b_2, \ldots, b_k)$ if and only if $C_{\pi}(b_k) =0$, $C_{\pi}(n) = b_1$, and $C_{\pi}(b_{j-1})=b_j$ for all $j\in\{2, 3, \ldots, k-1\}$, our proposition holds.
\end{proof}

With $b_j$ denoting the $j$-th element of the ordered strategic pile of a permutation $\pi$, adjacent entries of the form $b_{j} \; b_{j-1}+1$ in $\pi$ are called a \emph{pair}. Viewing subscripts modulo $k$, we also consider $b_1 \; b_k+1$ a pair. In general, a permutation $\pi$ with $\textsf{SP}^*(\pi) = (b_1, b_2, \ldots, b_k)$ has the following form in terms of its pairs:
\begin{equation}\label{pairform}
[b_k+1 \; \cdots \; b_{x_1} \; b_{x_1-1}+1 \; \cdots \; b_{x_2} \; b_{x_2-1}+1 \; \cdots \cdots \; b_{x_{k-1}} \; b_{x_{k-1}-1}+1 \; \cdots \; b_1].
\end{equation}

\begin{defn} \label{def:orderedpairlist}
The ordered list
\[
  \sigma_{\pi} = (b_{x_1}, b_{x_2}, \ldots, b_{x_{k-1}}, b_1),
\]
consisting of the first member of each pair, in the order of occurrence in $\pi$, is said to be the \textit{ordered pair list} of $\pi$.
\end{defn}

Since $b_1$ is the final entry of a permutation $\pi$ with a nonempty strategic pile, $b_1$ is always the terminating member of the ordered pair list $\sigma_{\pi}$.

\begin{example}\label{ex:mergepairduplication}
The permutation $\pi = \lbrack 6 \ 4 \ 5 \ 8 \ 7 \ 2 \ 3 \ 1 \rbrack$ has strategic pile $\textsf{SP}(\pi) = \{1, 5, 7\}$, and $\textsf{SP}^{*}(\pi) = (1,7,5) = (b_1, b_2, b_3)$. Therefore, $\pi = [6 \; 4 \; b_3 \; 8 \; b_2 \; 2 \; 3 \; b_1] = [b_3+1 \; 4 \; b_3 \; b_2+1 \; b_2 \; b_1+1 \; 3 \; b_1]$, as suggested by Proposition \ref{form}. This gives that $\sigma_{\pi} = (b_3,b_2,b_1)$.
\end{example}

In Definition \ref{def:orderedpairlist} we defined the ordered pair list with respect to a specified permutation $\pi$. Note, however, that we can instead define an ordered pair list independently of a specific permutation. Using this interpretation, any permutation where the $x_i$-th strategic pile element leads the $i$-th pair for all $1 \leq i \leq k-1$ will be said to have the ordered pair list $\sigma = (b_{x_1}, b_{x_2}, \ldots, b_{x_{k-1}}, b_1)$.

\begin{example}
Consider the ordered pair list $\sigma = (b_2,b_3,b_1)$, defined independently of a specific permutation. Any permutation with $\textsf{SP}^* = (b_1, b_2, b_3)$ that is of the form 
$$[b_3+1 \; \cdots \; b_2 \; b_1+1 \; \cdots \; b_3 \; b_2+1 \; \cdots \; b_1 ]$$
will have ordered pair list $\sigma$. In particular, the permutation $\pi = [2 \; 3 \; 6 \; 1 \; 4 \; 5]$ has $\textsf{SP}^* = (5,3,1)$ and thus $\sigma_{\pi} = (3,1,5) = (b_2, b_3, b_1) = \sigma$. Similarly, the permutation $\nu = [4 \; 1 \; 6 \; 3 \; 2 \; 5]$ has $\textsf{SP}^* = (5,1,3)$ and thus $\sigma_{\nu} = (1,3,5) = (b_2, b_3, b_1) = \sigma$.
\end{example}

As Example \ref{ex:formexample} will illustrate, for subsequent pairs $b_{x_i} \; b_{x_i-1}+1$ and $b_{x_{i+1}} \; b_{x_{i+1}-1}+1$ of a permutation $\pi$ it may happen that $b_{x_i-1}+1 = b_{x_{i+1}}$, in which case $b_{x_i}$ and $b_{x_{i+1}}$ are consecutive entries of $\pi$. As these adjacencies will be of central importance in the proof of Theorem \ref{pileSizeK}, we formalize their definition as follows:

\begin{definition} \label{def:merge}
An adjacency of strategic pile members $b_{x_i}$ and $b_{x_{i+1}}$ in $\pi$ is said to be a \textit{merge between $b_{x_i}$ and $b_{x_{i+1}}$} in $\pi$. Such a merge will be denoted $b_{x_i} \; b_{x_{i+1}}$.
\end{definition}

\begin{example} \label{ex:formexample}
The permutation $\pi = [5\;4\;6\;3\;2\;1]$ has strategic pile $\textsf{SP}(\pi) = \{1, \; 3, \; 4, \; 5\}$, and $\textsf{SP}^*(\pi) = (1, \; 3, \; 5, \; 4) = (b_1, \; b_2, \; b_3, \; b_4)$. Moreover, $\sigma_{\pi} = (b_3,\; b_4,\; b_2,\;b_1)$ since $\pi$ has the form $\lbrack b_3\; b_4\; 6\; b_2\; 2\; b_1\rbrack$. The strategic pile members $b_3$ and $b_4$ are adjacent in $\pi$, and thus there is a merge in $\pi$. Since we are also considering $b_1\;b_4+1$ a pair in $\pi$, $b_1\; b_3$ is also ruled a merge in $\pi$.
\end{example}

When considering strategic piles of size $k$, we refer to an arrangement of the strategic pile variables $b_i$ and $b_i+1$ for $1 \leq i \leq k$ as a \emph{strategic pile variable arrangement} if the arrangement satisfies the properties described in Proposition \ref{form}. All possible strategic pile variable arrangements can be obtained by shifting and merging pairs within the possible frameworks of the form (\ref{pairform}).

\begin{example}
The following are five of the possible strategic pile variable arrangements for permutations in $\textsf{S}_7$ with $\textsf{SP}^*=(b_1, b_2, b_3)$ and ordered pair list $\sigma = (b_2, b_3, b_1)$, where the \underline{\hspace{.5cm}}'s can be filled in by any remaining permutation elements:

\begin{enumerate}
\item $[b_3+1 \; \underline{\hspace{.5cm}} \; b_2 \; b_1+1 \; b_3 \; b_2+1 \; b_1]$ (no merges)
\item $[b_3+1 \; b_2 \; b_1+1 \; \underline{\hspace{.5cm}} \; b_3 \; b_2+1 \; b_1]$ (no merges)
\item $[b_2 \; b_1+1 \; \underline{\hspace{.5cm}} \; b_3 \; b_2 + 1 \underline{\hspace{.5cm}} \; b_1]$ (merge $b_1 \; b_2$)
\item $[b_2 \; b_1+1 \; b_3 \; b_2 + 1 \; \underline{\hspace{.5cm}} \; \underline{\hspace{.5cm}} \; b_1]$ (merge $b_1 \; b_2$)
\item $[b_2 \; b_1+1 \; \underline{\hspace{.5cm}} \; \underline{\hspace{.5cm}} \; \underline{\hspace{.5cm}} \; b_3 \; b_1]$ (merges $b_3 \; b_1$ and $b_1 \; b_2$)
\end{enumerate}
\end{example}

The above definitions and structural properties regarding permutations with strategic piles of size $k$ yield an approach to quantifying such permutations. Since a permutation has strategic pile size $k$ if and only if it takes the form described in Proposition \ref{form}, we start by counting the number of strategic pile variable arrangements. To this end, we define \emph{merge numbers}.

\begin{definition}\label{def:mergenumber} 
Consider the set of permutations $\pi \in \textsf{S}_n$ with $\textsf{SP}^*(\pi)=(b_1, \ldots, b_k)$. Given $\ell \geq 0$, the symbol $c_{k,\ell}$ denotes the number of ways to choose an ordered pair list $\sigma_{\pi}$ along with $\ell$ merges. The number $c_{k,\ell}$ is said to be a \textit{merge number}.
\end{definition}

\begin{example} \label{ex:mergenoexample}
For permutations with $\textsf{SP}^* = (b_1, b_2, b_3)$, the only possible ordered pair lists are $(b_2, b_3, b_1)$ and $(b_3, b_2, b_1)$, which correspond to the following permutation structures:
\begin{enumerate}
\item $\lbrack b_3+1 \; \cdots \; b_2 \; b_1+1 \; \cdots \; b_3 \; b_2+1 \; \cdots \; b_1 \rbrack$

\item $\lbrack b_3+1 \; \cdots \; b_3 \; b_2+1 \; \cdots \; b_2 \; b_1+1 \; \cdots \; b_1 \rbrack$
\end{enumerate}

In the first form, each of the merges $b_2 \; b_3$, $b_3 \; b_1$, and $b_1 \; b_2$ are possible, so there are three ways to create a single merge with this ordered pair list. In the second form, a merge cannot occur at all, since it would require that $b_i + 1 = b_i$, which is impossible. Therefore, $c_{3,1}= 1 \cdot 3 + 1 \cdot 0 = 3$. 
\end{example}

We are now ready to state the main result of this section.

\subsection{Main Result}

\begin{thm} \label{pileSizeK}
For $1 \leq k \leq n-1$ and even $n$, or $1 \leq k \leq n-2$ and odd $n$, the number of permutations in $\textsf{S}_n$ with strategic pile size $k$ is $$(n-k)! \sum \limits_{i=0}^{\infty} c_{k,i}\binom{n-(k+1)}{k-(i+1)}.$$ 
\end{thm}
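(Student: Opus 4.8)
The plan is to count permutations $\pi \in \textsf{S}_n$ with strategic pile size $k$ by organizing them according to their strategic pile variable arrangement (in the sense defined before Definition \ref{def:mergenumber}). By Proposition \ref{form}, a permutation has strategic pile size $k$ exactly when it can be written in the form (\ref{pairform}) subject to conditions (1)--(3); every such permutation is obtained by first choosing an ordered pair list $\sigma_\pi$, then deciding which consecutive pairs are glued into merges, then placing the resulting blocks of strategic-pile variables into a length-$n$ one-line word, and finally filling the remaining slots with the $n - 2k$ non-strategic-pile values (there are $k$ values $b_i$ and $k$ values $b_i + 1$ among the entries, but note $b_{x_{i-1}}+1$ may coincide with some $b_{x_j}$, so one must be careful about the exact count of ``used'' symbols — I will reconcile this below). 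The first would be to make this decomposition precise: fix $\sigma_\pi$ and a set of $\ell$ merges among the $k$ cyclically-indexed pair junctions; this data is counted, by Definition \ref{def:mergenumber}, by $c_{k,\ell}$.

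Next I would count, for a fixed choice of ordered pair list together with $\ell$ merges, how many actual permutations in $\textsf{S}_n$ realize it. Once the merges are fixed, the strategic pile variables and their successors form a collection of rigid blocks: a pair $b_{x_i}\,(b_{x_i-1}+1)$ that is not merged onto the following pair contributes a block, while a maximal run of $\ell'$ consecutively merged pairs collapses into one longer block. The key combinatorial bookkeeping is that the total number of one-line positions occupied by strategic-pile-related symbols is $2k$ minus the number of merges, because each merge identifies the ``$+1$'' entry of one pair with the leading entry of the next; equivalently, the blocks together occupy $2k - \ell$ cells and there are $k - \ell$ blocks. Two of these blocks are pinned: condition (1) forces the block containing $b_k + 1$ to start at position $1$, and condition (2) forces the block ending in $b_1$ to end at position $n$. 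Hence we must arrange the remaining $k - \ell - 2$ blocks together with the $n - 2k + \ell$ free symbols in the $n - 2k + \ell$ interior-ish slots; choosing the positions of these blocks among the free symbols is a binomial coefficient, and the free symbols themselves can be permuted in $(n - 2k + \ell)!$ ways. Collecting the index shift $i = \ell$, matching $(n-k)!$ against $(n - 2k + \ell)!$ times the falling-factorial part of $\binom{n-(k+1)}{k-(i+1)}$, and summing over $\ell \ge 0$ (finitely many terms are nonzero) should yield exactly $(n-k)! \sum_{i=0}^\infty c_{k,i}\binom{n-(k+1)}{k-(i+1)}$.

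I would organize the write-up as: (a) reduce via Proposition \ref{form} to counting strategic pile variable arrangements together with fillings; (b) show that choosing $\sigma_\pi$ plus $\ell$ merges is exactly $c_{k,\ell}$ choices and that this data determines $k - \ell$ blocks spanning $2k - \ell$ cells, two of which are positionally pinned; (c) count the fillings as $\binom{n - (k+1)}{\,k - \ell - 1\,}\cdot(n - 2k + \ell)!$ — reconciling this with $\binom{n-(k+1)}{k-(i+1)}$ under $i = \ell$ and absorbing the factorial discrepancy into $(n-k)!$ by writing $(n-k)! = (n-2k+\ell)! \cdot (n-k)!/(n-2k+\ell)!$ and checking the remaining quotient equals the ratio of binomials, i.e.\ carefully verifying the algebraic identity $(n-k)!\binom{n-(k+1)}{k-(i+1)} = (\text{number of fillings for fixed }\sigma_\pi,\ i\text{ merges})$; and (d) sum over $i$ and over the $c_{k,i}$ choices.

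The main obstacle I anticipate is step (c): pinning down precisely how many cells the merged blocks occupy and how many ``free'' entries remain, because a merge between $b_{x_i}$ and $b_{x_{i+1}}$ means the entry $b_{x_i - 1}+1$ literally equals $b_{x_{i+1}}$, so naively the $2k$ symbols $\{b_i, b_i+1\}$ are not all distinct positions — one must track overlaps carefully and confirm the count is $2k - \ell$ occupied cells giving $n - 2k + \ell$ free cells, and then confirm that the leading/trailing pinning of two blocks reduces the number of freely placeable blocks by exactly $2$. A secondary subtlety is verifying that distinct (ordered pair list, merge set, block placement, free filling) tuples give distinct permutations and that every permutation with strategic pile size $k$ arises exactly once — this should follow cleanly from the ``if and only if'' in Proposition \ref{form} together with the uniqueness of how a one-line permutation decomposes into its pairs, but it deserves an explicit sentence.
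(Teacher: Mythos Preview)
Your overall strategy coincides with the paper's: decompose by ordered pair list and number of merges (contributing the factor $c_{k,\ell}$), then count placements of the resulting groupings, then count value assignments. The paper carries this out via Lemma \ref{lem:groupingplacement} (giving $\binom{n-(k+1)}{k-\ell-1}$ placements, matching the binomial you write down) and Lemma \ref{lem:filler} (giving $(n-k)!$ value assignments per arrangement), and then multiplies and sums over $\ell$.

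However, your step (c) has a genuine gap. You propose to count the ``fillings'' for a fixed $\sigma_\pi$ and $\ell$ merges as $\binom{n-(k+1)}{k-\ell-1}\cdot(n-2k+\ell)!$, where the factorial comes from permuting the free (non-strategic-pile) symbols among the $n-2k+\ell$ remaining cells. But this treats the strategic pile values $b_1,\dots,b_k$ as already fixed numerical values, which they are not: two permutations with the same strategic pile variable arrangement can have entirely different strategic piles as subsets of $\{1,\dots,n-1\}$. Your count omits precisely this choice, and the algebraic ``reconciliation'' you sketch cannot succeed, since equating your expression with $(n-k)!\binom{n-(k+1)}{k-\ell-1}$ would force $(n-2k+\ell)! = (n-k)!$, i.e.\ $\ell = k$. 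The paper's resolution is cleaner than separately choosing $b$-values and free symbols: among the $n$ positions of $\pi$, exactly $k$ are labeled $b_j+1$ for some $j$ and hence have their values determined by other positions; the remaining $n-k$ positions can be filled in $(n-k)!$ ways, and this single factorial simultaneously accounts for choosing the numerical values of the $b_j$'s and for arranging the free symbols.

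There is also an off-by-one in your block bookkeeping. With the end entries $b_k+1$ and $b_1$ treated as separate pinned pieces, there are $k+1$ pieces before merging, hence $(k-1)-\ell$ freely placeable groupings after $\ell$ merges, not the $k-\ell-2$ you state. Your written binomial $\binom{n-(k+1)}{k-\ell-1}$ is in fact the correct one, but it does not follow from ``$k-\ell$ blocks with two pinned'' --- that reasoning would yield $\binom{n-k-2}{k-\ell-2}$ instead.
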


As there is a limit on the number of merges that can occur in a permutation, each merge number $c_{k,i}$ will be zero for all $i$ above a certain value. We leave determining this maximum number of merges, as well as the general method for computing merge numbers, to Section \ref{mergenumbersection}. To prove Theorem \ref{pileSizeK}, we will 

\begin{itemize}
\item use merge numbers to determine the number of strategic pile variable arrangements (see Lemma \ref{lem:groupingplacement} and Corollary \ref{cor:grouping}), and
\item determine the number of ways to assign numerical values to the resulting variable arrangements (see Lemma \ref{lem:filler}).
\end{itemize}

Assuming we can compute each merge number $c_{k,i}$, we can suppose we are given a framework comprised of an ordered pair list and a set of merges. To quantify the possible strategic pile variable arrangements, we are left to account for how this framework can shift within $n$ positions. To this end, we develop terminology to refer to the components of this framework.  

\begin{example} \label{ex:grouping}
Consider an ordered pair list $\sigma = (b_{x_1}, \; b_{x_2}, \; \ldots, \; b_{x_{k-1}}, \; b_1)$, which by Lemma \ref{form} yields a permutation of the form
\[ 
[\text{$b_k+1$} \; \cdots \; b_{x_1} \; \text{\underline{$b_{x_1-1}+1$}} \; \cdots \; b_{x_2} \; \text{\underline{$b_{x_2-1}+1$}} \; \cdots \cdots \; b_{x_{k-1}} \; \text{\underline{$b_{x_{k-1}-1}+1$}} \; \cdots \; \text{$b_1$}].
\]
After a merge, say between $b_{x_1}$ and $b_{x_2}$, we get 
\[ 
[\text{$b_k+1$} \; \cdots \; b_{x_1} \; \text{\underline{$b_{x_2}$}} \; \text{\underline{$b_{x_2-1}+1$}} \; \cdots \cdots \; b_{x_{k-1}} \; \text{\underline{$b_{x_{k-1}-1}+1$}} \; \cdots \; \text{$b_1$}].
\]
\end{example}

In Example \ref{ex:grouping} above, observe that it may not be intuitive to call $b_{x_1} \; b_{x_2} \; b_{x_2 - 1} + 1$ a pair; we use the term \emph{grouping} to refer to pairs as well as any set of pairs joined by merges.

Recall that $b_k+1$ and $b_1$ are always in the first and last positions of a permutation, respectively. Moreover, observe that the position of each underlined element in Example \ref{ex:grouping} is determined by the placement of the leftmost element in its grouping. We call both of these types of elements \emph{determined}. 

In a permutation with strategic pile of size $k$ with no merges, there are $k+1$ determined elements (i.e. $b_1$, $b_k+1$, and $b_{j-1}+1$ for $2 \leq j \leq k$). Furthermore, observe that ``merging'' groupings does not affect the total number of determined elements, since a merge has the effect of equating a determined element with an undetermined element. In Example \ref{ex:grouping}, the merge between $b_{x_1}$ and $b_{x_2}$ equates $b_{x_1-1}+1$ (a determined element) with $b_{x_2}$ (an undermined element), making a grouping with two determined elements, the same total number that the pairs $b_{x_1} \; b_{x_1-1}+1$ and $b_{x_2} \; b_{x_2-1}+1$ had to begin with. Therefore, any grouping arrangement, despite the number of merges, will have $k+1$ determined elements. 

\begin{lemma} \label{lem:groupingplacement}
Given an ordered pair list $\sigma = (b_{x_1},\ldots, b_{x_k})$ and a set of $i$ merges, there are 
$$\binom{n-(k+1)}{(k-1)-i}$$
ways to place the resulting groupings within a permutation of length $n$.
\end{lemma}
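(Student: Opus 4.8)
The plan is to count placements by first fixing the two forced positions and then distributing the remaining determined elements. As observed in the discussion preceding the lemma, a grouping arrangement built from the ordered pair list $\sigma$ together with a chosen set of $i$ merges has exactly $k+1$ determined elements: the entry $b_k+1$, the entry $b_1$, and the entry $b_{j-1}+1$ for each $2 \le j \le k$. Two of these are pinned down with no freedom: $b_k+1$ must occupy position $1$ and $b_1$ must occupy position $n$, by Proposition \ref{form}. So the combinatorial content is in placing the other $k-1$ determined elements — equivalently, in choosing where each grouping begins, since (again by the preceding discussion) the position of every non-leading determined element inside a grouping is forced once the leading element of that grouping is placed.

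The key step is to recast the placement of groupings as a lattice-path / stars-and-bars count. A set of $i$ merges partitions the $k$ pairs (viewing the $b_1\,b_k+1$ pair as one of them) into $k-i$ groupings; the first of these groupings is the one beginning with $b_k+1$, and it is anchored at position $1$, while the grouping ending with $b_1$ is anchored at position $n$. Between these anchors we must lay out the remaining $k-i-1$ internal groupings in the order prescribed by $\sigma$, interleaved with the $n-(k+1)$ undetermined (filler) entries. I would count this by choosing the starting positions of the $k-1$ non-first determined elements among the $n-2$ positions $2,\dots,n-1$, but with the constraint that determined elements forced to be consecutive (inside a common grouping) cannot be separated and the groupings must appear left-to-right in $\sigma$-order. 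Collapsing each grouping to a single ``block'' and counting gaps: we are arranging $k-i$ blocks (one pinned at the left end, one pinned at the right end) together with $n-(k+1)$ filler entries in a line, which is a composition problem whose solution is $\binom{n-(k+1)}{(k-1)-i}$ — the filler entries get distributed into the $(k-i)-1$ internal gaps plus account for the positions already consumed by the $k+1$ determined slots. I would make this precise by the standard bijection: a valid placement corresponds to choosing, among the $n-(k+1)$ positions not occupied by determined elements, which $(k-1)-i$ of the "slots to the left of an internal determined element" are non-empty; equivalently $\binom{(n-(k+1)) + ((k-i)-1) - 1}{(k-i)-1 - 1}$-type counts collapse to the stated binomial after bookkeeping.

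The main obstacle I anticipate is the bookkeeping around the two pinned endpoints and the off-by-one in the number of "free" gaps: one must be careful that the grouping containing $b_k+1$ starts exactly at position $1$ (so its internal determined elements eat up early positions with no choice), that the grouping ending in $b_1$ ends exactly at position $n$, and that merges reduce both the number of blocks and — crucially — the number of determined elements that still need independent placement from $k-1$ down to $(k-1)-i$, while leaving the filler count at $n-(k+1)$ regardless of $i$ (since a merge identifies a determined element with a hitherto-undetermined one, as noted in the text, so the undetermined pool shrinks but the "$k+1$ determined elements" total is invariant). Once those invariants are stated cleanly, the count is the number of ways to interleave $n-(k+1)$ filler entries with $(k-i)-1$ internal block-starts subject to no constraints, i.e. $\binom{(n-(k+1)) + ((k-i)-1)}{(k-i)-1}$; reconciling this with the claimed $\binom{n-(k+1)}{(k-1)-i}$ is the one identity I would verify carefully, and I expect it to follow because the determined elements inside blocks occupy fixed relative slots, shifting the effective count of "free positions for filler" down precisely to $n-(k+1) - ((k-i)-1) + \cdots$, landing on $\binom{n-(k+1)}{(k-1)-i}$ after simplification.
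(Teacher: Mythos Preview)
Your stars-and-bars framework is sound and would give a valid alternative to the paper's argument, but there is a concrete miscounting that prevents your displayed binomials from matching the claim. You call $n-(k+1)$ the number of \emph{filler} entries, but $n-(k+1)$ is the number of \emph{non-determined} positions, which includes both the filler and the $(k-1)-i$ undetermined grouping leaders. The actual number of filler entries is $n-(k+1)-\bigl((k-1)-i\bigr)=n-2k+i$. With $k-i$ gaps (one after the initial segment starting at $b_k+1$ and one after each of the remaining $k-1-i$ internal groupings), stars and bars gives
\[
\binom{(n-2k+i)+(k-i)-1}{(k-i)-1}=\binom{n-(k+1)}{(k-1)-i}
\]
directly, with no identity left to reconcile. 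The expression $\binom{(n-(k+1))+((k-i)-1)}{(k-i)-1}$ you wrote equals $\binom{n-i-2}{k-i-1}$, which is genuinely different, so the ``bookkeeping'' you defer would not have closed on its own.

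For comparison, the paper's proof avoids the gap-distribution viewpoint entirely. It observes that there are always $k+1$ determined elements, sets aside a slot for each, and then notes that placing the groupings amounts to choosing which $(k-1)-i$ of the remaining $n-(k+1)$ slots are occupied by the undetermined leading element of each non-anchored grouping; the order is forced by $\sigma$, so this is just $\binom{n-(k+1)}{(k-1)-i}$. This is the dual count to your corrected stars-and-bars (choosing positions of leaders among non-determined slots versus distributing filler into gaps between groupings), and it reaches the binomial in one line without the filler/gap accounting that tripped you up.
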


\begin{proof}
Recall that in a permutation with strategic pile size $k$, there are always $k+1$ determined elements. For each determined element, we set aside one space in the permutation. This leaves $n-(k+1)$ unoccupied spaces in which to place the groupings. Since the leftmost variable of each grouping is the only undetermined variable in the grouping, we must only place these $(k-1)-i$ undetermined variables, and the placement of all other variables follows. Because there are $n-(k+1)$ spaces in which to place these undetermined variables, $\displaystyle \binom{n-(k+1)}{(k-1)-i}$ represents the number of ways to place the groupings. 
\end{proof}

\begin{cor} \label{cor:grouping}
The number of strategic pile variable arrangements in the set of permutations in $\textsf{S}_n$ with strategic piles of size $k$ and $i$ merges is
$$c_{k,i} \cdot \binom{n-(k+1)}{k-i-1}.$$
\end{cor}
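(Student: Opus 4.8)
The plan is to prove Corollary \ref{cor:grouping} by combining the two counting ingredients that have just been assembled: the merge numbers $c_{k,i}$, which by Definition \ref{def:mergenumber} count the number of ways to choose an ordered pair list together with $i$ merges, and Lemma \ref{lem:groupingplacement}, which counts the number of ways to place the resulting groupings within a permutation of length $n$. The key observation is that specifying a strategic pile variable arrangement with exactly $i$ merges amounts to making two independent choices: first the combinatorial \emph{type} of the arrangement (the ordered pair list and which $i$ merges occur), and then the \emph{position} of that type's groupings among the $n$ slots.

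First I would make precise the sense in which these two choices are independent and together determine a strategic pile variable arrangement. By the discussion preceding Lemma \ref{lem:groupingplacement}, once an ordered pair list $\sigma$ and a set of $i$ merges are fixed, the permutation decomposes into groupings, each grouping having a single undetermined leading element whose position, once chosen, forces the positions of all $k+1$ determined elements in that grouping via the pair/merge relations. Conversely, any strategic pile variable arrangement with $i$ merges arises from a unique such pair (type, placement): the type is read off from which strategic pile members lead pairs and which consecutive strategic pile members are adjacent, and the placement is read off from where the grouping leaders sit. This bijection is the heart of the argument.

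Next I would simply count: there are $c_{k,i}$ choices for the type by Definition \ref{def:mergenumber}, and for each such type there are $\binom{n-(k+1)}{(k-1)-i} = \binom{n-(k+1)}{k-i-1}$ choices for the placement by Lemma \ref{lem:groupingplacement}. Since the two choices are independent, the multiplication principle gives $c_{k,i}\cdot\binom{n-(k+1)}{k-i-1}$ strategic pile variable arrangements with $i$ merges, which is exactly the claimed count. One should note here that Lemma \ref{lem:groupingplacement} is stated for a fixed ordered pair list and a fixed set of merges, so the count $\binom{n-(k+1)}{k-i-1}$ does not depend on which ordered pair list or which $i$ merges were chosen — only on $k$ and $i$ — which is precisely what makes the product, rather than a sum over types, correct.

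The main obstacle I anticipate is purely expository rather than mathematical: one must be careful that the "type" being counted by $c_{k,i}$ and the "placement" being counted by Lemma \ref{lem:groupingplacement} are genuinely complementary, i.e. that no strategic pile variable arrangement is counted twice and none is missed. The subtlety is that a merge changes which elements are "determined" versus "undetermined" (a merge equates a determined element with a previously undetermined one), so one must invoke the already-established fact that every grouping arrangement has exactly $k+1$ determined elements regardless of the number of merges, ensuring the $n-(k+1)$ free slots in Lemma \ref{lem:groupingplacement} is the correct quantity in all cases. Given that this structural fact was argued in the paragraph preceding Lemma \ref{lem:groupingplacement}, the corollary then follows immediately from the multiplication principle, and the proof is short.
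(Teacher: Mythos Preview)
Your proposal is correct and follows essentially the same approach as the paper's own proof: invoke Definition~\ref{def:mergenumber} to get $c_{k,i}$ choices of type (ordered pair list plus $i$ merges), invoke Lemma~\ref{lem:groupingplacement} to get $\binom{n-(k+1)}{k-i-1}$ placements for each type, and multiply. The paper's version is terser, omitting the explicit bijection and independence discussion you include, but the logical content is identical.
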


\begin{proof}
By definition of the merge number $c_{k,i}$, there are $c_{k,i}$ ways to choose an ordered pair list $\sigma = (b_{x_1},\ldots,b_{x_k})$ along with $i$ merges. Each fixed ordered pair list and set of merges defines a framework of groupings, which by Lemma \ref{lem:groupingplacement} can be placed $\displaystyle \binom{n-(k+1)}{k-i-1}$ different ways among $n$ positions. Thus, there are a total of $\displaystyle c_{k,i} \binom{n-(k+1)}{k-i-1}$ different strategic pile variable arrangements.  
\end{proof}

We now determine the number of ways to assign a numerical value to each variable and to the remaining elements of the permutation. 

\begin{lemma} \label{lem:filler}
Given a fixed strategic pile variable arrangement (i.e. an ordered pair list $\sigma$ of strategic pile elements $\{ b_1, \ldots, b_k \}$, a set of merges, and a fixed set of grouping positions), there are $(n-k)!$ permutations in $\textsf{S}_n$ with that arrangement.
\end{lemma}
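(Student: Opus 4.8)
The plan is to regard a permutation with the prescribed variable arrangement as the result of two independent choices: an assignment of numerical values to the strategic pile elements $b_1,\dots,b_k$ (which, through the relations in Proposition \ref{form}, simultaneously fixes the values $b_1+1,\dots,b_k+1$ occupying the determined positions), together with an arbitrary filling of the remaining filler positions by the leftover values. Writing $i$ for the number of merges in the arrangement, I would first record that a merge is exactly a coincidence $b_m+1=b_\ell$, so the $2k$ symbols $b_1,\dots,b_k,b_1+1,\dots,b_k+1$ realize only $2k-i$ distinct values; these occupy the $2k-i$ grouping positions fixed by the arrangement, leaving $n-(2k-i)$ filler positions. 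Since $n$ never lies in a strategic pile, each $b_m+1$ is a genuine entry of the permutation, so all of these values lie in $\{1,\dots,n\}$ and $2k-i\le n$.

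Next I would describe the admissible value-assignments to $b_1,\dots,b_k$. The $i$ merges force these values to split into $k-i$ maximal runs of consecutive integers, of sizes $\ell_1,\dots,\ell_{k-i}$ with $\sum_c\ell_c=k$, the partition being determined by $\sigma$ and the chosen merges. A run of size $\ell_c$, together with the value one larger than its maximum --- which is the value of some ``$+1$'' symbol and so must be a fresh value not exceeding $n$ --- forms a block of $\ell_c+1$ consecutive integers in $\{1,\dots,n\}$; and since distinct symbols must receive distinct values, the $k-i$ resulting labeled blocks must be pairwise disjoint. Conversely --- and this is the heart of the argument --- I would check that any choice of $k-i$ pairwise disjoint labeled blocks of sizes $\ell_1+1,\dots,\ell_{k-i}+1$ inside $\{1,\dots,n\}$ produces an admissible value-assignment: disjointness of the blocks is precisely what guarantees that no two symbols collide and that no merge beyond the prescribed ones is created, so by Proposition \ref{form} the completed filling is a genuine permutation in $\textsf{S}_n$ whose ordered strategic pile is exactly $(b_1,\dots,b_k)$.

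It then remains to count. Choosing admissible values for $b_1,\dots,b_k$ is the same as placing $k-i$ labeled, pairwise disjoint intervals of total size $2k-i$ inside $\{1,\dots,n\}$; contracting each interval to a single point leaves $n-k$ slots, of which $k-i$ labeled ones are the blocks, so there are $\binom{n-k}{k-i}(k-i)!=\dfrac{(n-k)!}{(n-2k+i)!}$ such placements. For each fixed value-assignment, the $n-2k+i$ values not lying in a block must be distributed among the $n-2k+i$ filler positions, in $(n-2k+i)!$ ways. Since distinct data yield distinct permutations and every permutation with the arrangement arises in this way, the total count is
\[
\frac{(n-k)!}{(n-2k+i)!}\cdot (n-2k+i)! \;=\; (n-k)!,
\]
independent of the number of merges, as claimed.

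The step I expect to be the real obstacle is the verification in the second paragraph: one must be sure that ``pairwise disjoint blocks'' encodes all of the constraints (distinctness of every $b_m$ and $b_m+1$, the bound $b_m\le n-1$, and the absence of unwanted merges) and nothing more, and that running the chosen values back through Proposition \ref{form} really certifies that the strategic pile has size exactly $k$ with the intended ordered pair list. The two enumerations in the third paragraph are then routine.
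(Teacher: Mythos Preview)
Your argument is correct. The decomposition into (i) placing $k-i$ labeled consecutive blocks of total length $2k-i$ inside $\{1,\dots,n\}$ and (ii) filling the $n-2k+i$ remaining positions is sound, and the contraction count $\binom{n-k}{k-i}(k-i)!\cdot(n-2k+i)!=(n-k)!$ is right. Your verification that pairwise disjointness of the blocks is exactly the condition preventing unintended equalities $b_m+1=b_\ell$ (hence unintended merges), together with the appeal to the ``if and only if'' in Proposition~\ref{form}, closes the loop that the resulting permutation has ordered strategic pile precisely $(b_1,\dots,b_k)$.

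The paper's own proof is much terser and proceeds differently: it simply observes that among the $n$ entries of the permutation, the $k$ values $b_1+1,\dots,b_k+1$ are forced once $b_1,\dots,b_k$ are chosen, so only $n-k$ values are ``free,'' and declares there are $(n-k)!$ assignments. That one-line argument leaves unexamined exactly the point you flagged as the obstacle---why every choice of those $n-k$ free values yields a genuine permutation with no accidental merges and strategic pile of size exactly $k$---whereas your two-stage count makes this explicit and shows, via the cancellation, why the answer is independent of the number $i$ of merges. What the paper's route buys is brevity; what yours buys is a transparent reason the constraints do not cut down the count.
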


\begin{proof}
Given a strategic pile variable arrangement, the only thing left to do is assign values to the variables comprising the permutation. For each $b_j \in \textsf{SP}$, there is some variable $b_j+1$, whose value follows immediately from a value assignment of $b_j$. Therefore, only $n-k$ values need to be assigned, and there are $(n-k)!$ possible assignments.
\end{proof}

We are now ready to prove our main result of this section.

\begin{proof}[Proof of Theorem \ref{pileSizeK}]
By Corollary \ref{cor:grouping}, there are $\displaystyle c_{k,i} \binom{n-(k+1)}{k-i-1}$ strategic pile variable arrangements in the set of permutations in $\textsf{S}_n$ with strategic piles of size $k$ and $i$ merges. Summing over the number of merges $i$, we get that the total number of strategic pile variable arrangements for permutations in $\textsf{S}_n$ with strategic piles of size $k$ is
$$\sum_{i=0}^{\infty} c_{k,i} \binom{n-(k+1)}{k-i-1}.$$
We have by Lemma \ref{lem:filler} that there are $(n-k)!$ permutations corresponding to each strategic pile variable arrangement. It follows that there are
$$(n-k)!\sum_{i=0}^{\infty} c_{k,i} \binom{n-(k+1)}{k-i-1}$$
permutations in $\textsf{S}_n$ with strategic pile size $k$.
\end{proof}

\section{Determining the Values of Merge Numbers} \label{mergenumbersection}

As mentioned in Section \ref{sec:pilesizek}, there is a limit to the number of merges that can occur in a permutation with strategic pile of size $k$. Thus, there exists an $i$ such that $c_{k,i'} = 0$ for all $i' \geq i$. In this section we determine this $i$ and derive an algorithm for computing merge numbers. As described later this section, determining the efficiency of this algorithm is dependent on the solutions of certain open problems. However, we can explicitly compute merge numbers in some limited cases (see Table \ref{table:DiscoveredMergeNumbers}). Using these merge numbers, Theorem \ref{pileSizeK} gives the formulas given in Table \ref{table:DiscoveredFormulas}.

\begin{table}
\centering
\captionsetup{width=.7\linewidth}
\captionsetup{justification=centering}
\renewcommand{\arraystretch}{1.5}
\begin{tabular}{|c|rrrrrrcrc|c|} \hline
\textbf{$k$} & 1   & 2  & 3  & 4   & 5     &  6     & $\cdots$ & $k$ & $\cdots$ & OEIS                       \\ \hline
$c_{k,0}$    &  0! &1!  & 2! & 3!  & 4!    &  5!     &                & (k-1)!     &               &                  A000142    \\
$c_{k,1}$    &      &     & 3  & 16 & 90   & 576  &                & k(k-2)(k-2)!        &               &   A130744  \\
$c_{k,2}$    &      &     & 3  & 16 & 130 & 1116 &                & ?      &               &                                \\
$c_{k,3}$    &      &     &     &      &   80 & 1080 &               & ?      &               &                                \\
$c_{k,4}$    &      &     &     &      &   90 &    540  &              & ?      &               &                                \\
$\vdots$      &     &      &     &      &        &                          &         &               &     &  \\
$c_{k,i}$      &     &      &     &      &        &                          &         &               &      & \\  \hline
\end{tabular}
\caption{Merge Numbers Found Using Ad Hoc Methods}
\label{table:DiscoveredMergeNumbers}
\end{table}

\begin{table}
\centering
\captionsetup{width=.7\linewidth}
\captionsetup{justification=centering}
\renewcommand{\arraystretch}{1.5}
\begin{tabular}{|l|l|l|} \hline
\textbf{$k$} & \textbf{Number of Elements of $\textsf{S}_n$ with Strategic Piles of Size $k$}  & OEIS                       \\ \hline
1            & $(n-1)!\lbrack\binom{n-2}{0}0!\rbrack$              &      A000142                            \\
2            & $(n-2)!\lbrack\binom{n-3}{1}1!\rbrack$              &      A062119                            \\
3            & $(n-3)!\lbrack\binom{n-4}{2}2!+\binom{n-4}{1}3+\binom{n-4}{0}3\rbrack$    &  A267323 \\
4            & $(n-4)!\lbrack\binom{n-5}{3}3!+\binom{n-5}{2}16+\binom{n-5}{1}16\rbrack$ & A267324 \\
5            & $(n-5)!\lbrack\binom{n-6}{4}4!+\binom{n-6 }{3}90+\binom{n-6}{2}130+\binom{n-6}{1}80+\binom{n-6}{0}90\rbrack$   &  A267391\\
6            & $(n-6)!\lbrack\binom{n-7}{5}5!+\binom{n-7}{4}576+\binom{n-7}{3}1116+\binom{n-7}{2}1080+\binom{n-7}{1}540\rbrack$ & A281259\\
$\vdots$ & & \\
$k$          & $(n-k)!\lbrack\binom{n-(k+1)}{k-1}(k-1)!+\binom{n-(k+1)}{k-2}c_{k,1}+\cdots+\binom{n-(k+1)}{1}c_{k,k-2}\rbrack$ (k odd)  & \\ \hline
\end{tabular}
\caption{Known Formulas for the Number of Permutations with Strategic Piles of Size $k$}
\label{table:DiscoveredFormulas}
\end{table}

We now discuss how to compute merge numbers in general. First, however, we define two graph theoretic tools, which will be useful for accomplishing both of the aforementioned goals.

\subsection{Merge Graphs and $\tau$-Graphs} \label{3.3:mergestuff}

As in Example \ref{ex:mergenoexample}, the ordered pair list determines the set of possible merges for permutations with that ordered pair list. As a result, it will often be of use to classify permutations based on ordered pair list.

\begin{definition}
Let $\sigma$ be an ordered pair list. Define $T_{\sigma} := \{ \pi \in \textsf{S}_n \; \vert \;  \sigma_{\pi} = \sigma \}$.
\end{definition}

Moreover, to count the number of ways to choose an ordered pair list along with $\ell$ merges (i.e. to compute merge numbers), it is crucial that we are first able to determine the set of allowable merges corresponding to an ordered pair list. To this end, we define the following: 

\begin{definition}\label{psiandsigmacycles}
Consider the set of permutations in $\textsf{S}_n$ with ordered strategic pile $\textsf{SP}^*(\pi)=(b_1, b_2, \ldots, b_k)$ and ordered pair list $\sigma = (b_{x_1}, b_{x_2}, \ldots, b_{x_{k-1}}, b_1)$. Define
\begin{itemize}
\item{$\psi := (b_1 \; b_2 \; \cdots \; b_k)$,}
\item{$\sigma^*:=(b_{x_1}\; b_{x_2}\;\cdots\; b_{x_{k-1}}\; b_1)$, and}
\item{$\tau_{\sigma} := \sigma^{*} \circ \psi$.}
\end{itemize}
\end{definition}

Observe that there exists a permutation $\pi \in T_{\sigma}$ with $b_i +1 = b_j$ if and only if $\tau_{\sigma}(b_i) = b_j$. In other words, $\tau_{\sigma}$ describes exactly the allowable merges for the set of permutations $T_{\sigma}$. It will often be useful for us to encode this information graphically.

\begin{definition}\label{taugraph}
The $\tau$\textit{-graph} corresponding to an ordered pair list $\sigma$ is an at most in-degree one, out-degree one directed graph $\mathscr{T}_{\sigma} = (V, \; E)$, where $V = \{ b_1, b_2, \ldots, b_k \}$ and $(b_i, b_j) \in E$ if and only if $b_i + 1 = b_j$ in some permutation in $T_{\sigma}$. 
\end{definition}

Note that an edge $(b_i,b_j)$ in a $\tau$-graph $\mathscr{T}_{\sigma}$ corresponds to the equality $b_i + 1 = b_j$, and not to the merge $b_i \; b_j$. Rather, the edge $(b_i,b_j)$ represents the existence of a merge $b_{i+1} \; b_j$ in some permutation $\pi \in T_{\sigma}$. Furthermore, it may be useful to note that $\tau$-graphs are comprised completely of cycles and isolated vertices, and that each cycle in the $\tau$-graph $\mathscr{T}_{\sigma}$ corresponds to a cyclic factor of the cycle permutation $\tau_{\sigma}$.

\begin{example}
Consider the set of permutations $T_{\sigma}$ corresponding to the ordered pair list $\sigma = (b_1, b_6, b_7, b_5, b_2, \allowbreak b_4, b_3)$. Then
\[
\tau_{\sigma} = \sigma^* \circ \psi =  (b_1 \; b_6 \; b_7 \;b_5 \;b_2 \;b_4 \;b_3)\circ (b_1\; b_2\; b_3\; b_4\; b_5\; b_6\; b_7) = (b_1\; b_4\; b_2)(b_5\; b_6\; b_7)(b_3), 
\]
which indicates that there are two cycles in $\mathscr{T}_{\sigma}$ formed by the edges $\{ (b_1, \; b_4), \; (b_4, b_2), \; (b_2, b_1) \}$ and $\{ (b_5, \; b_7), \; (b_7, \; b_6), \; (b_6, b_5) \}$. Figure \ref{fig:mergeandtau2} shows this graph. 
\end{example}

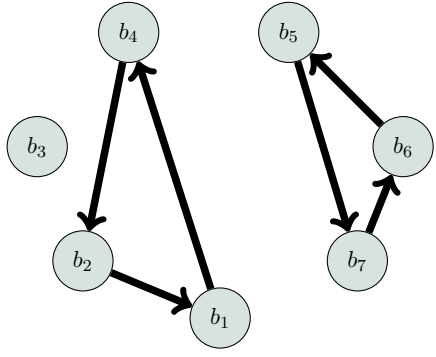
\begin{figure}
\centering
\captionsetup{width=.7\linewidth}
\captionsetup{justification=centering}
\begin{tikzpicture}[x = 4cm, y = 4cm, scale=.38, baseline = (current bounding box.north)]

       \vertex (c1) at (1.6,-1.0)[label,minimum size = 1.0cm,scale=.8,fill=lightgrey] {\textcolor{black}{\large $b_{1}$}};
       \vertex (c2) at (0.4,-0.5)[label,minimum size = 1.0cm,scale=.8,fill=lightgrey] {\textcolor{black}{\large $b_2$}};
       \vertex (c3) at (0,0.5)[label,minimum size = 1.0cm,scale=.8,fill=lightgrey] {\textcolor{black}{\large $b_3$}};
       \vertex (c4) at (0.8,1.5)[label,minimum size = 1.0cm,scale=.8,fill=lightgrey] {\textcolor{black}{\large $b_4$}};
       \vertex (c5) at (2.2,1.5)[label,minimum size = 1.0cm,scale=.8,fill=lightgrey] {\textcolor{black}{\large $b_5$}};
       \vertex (c6) at (3.2,0.5)[label,minimum size = 1.0cm,scale=.8,fill=lightgrey] {\textcolor{black}{\large $b_6$}};
       \vertex (c7) at (2.8,-0.5)[label,minimum size = 1.0cm,scale=.8,fill=lightgrey] {\textcolor{black}{\large $b_7$}};
       
\path
        (c1) edge [style={->, black, line width=1mm}](c4)
        (c4) edge [style={->, black, line width=1mm}](c2) 	
        (c2) edge [style={->, black, line width=1mm}](c1)
        (c5) edge [style={->, black, line width=1mm}](c7)	
        (c7) edge [style={->, black, line width=1mm}](c6)
        (c6) edge [style={->, black, line width=1mm}](c5);
\end{tikzpicture}
\caption{The $\tau$-graph $\mathscr{T}_{\sigma}$ corresponding to $\sigma = (b_5, b_6, b_4, b_2, b_3, b_7, b_1)$.}
\label{fig:mergeandtau2}
\end{figure}

We now define a second graph theoretic tool, the \emph{merge graph}, which will be similar to the $\tau$-graph, but will correspond to a specific permutation rather than to an ordered pair list.

\begin{definition} \label{mergeGraph}
The \textit{merge graph} of $\pi$ is an in-degree at most one, out-degree at most one directed graph $\mathscr{M}_{\pi}=(V, \; E)$, where $V=\{b_1, b_2, \ldots, b_k\}$ and $(b_i, b_j) \in E$ if and only if  $b_i +1 =b_j$ in the permutation $\pi$. 
\end{definition}

Observe that if $\pi$ is a permutation with ordered pair list $\sigma$, then the merge graph $\mathscr{M}_{\pi}$, is an edge subgraph of the $\tau$-graph $\mathscr{T}_{\sigma}$. Moreover, Lemma \ref{lemma:mergecycles} will show that $\mathscr{M}_{\pi}$ is a proper subgraph of $\mathscr{T}_{\sigma}$, and that any acyclic edge subgraph of $T_{\sigma}$ is a merge graph.

\begin{lemma}\label{lemma:mergecycles}
A graph is a merge graph if and only if it is an acyclic edge subgraph of a $\tau$-graph.
\end{lemma}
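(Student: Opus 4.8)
\textbf{Proof plan for Lemma \ref{lemma:mergecycles}.}

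The plan is to prove both directions by exploiting the relationship, already noted in the text, between a merge graph $\mathscr{M}_{\pi}$ and the $\tau$-graph $\mathscr{T}_{\sigma}$ of the ordered pair list $\sigma = \sigma_{\pi}$. First I would dispense with the easier inclusion: if $\mathscr{M}_{\pi}$ is a merge graph, it is an edge subgraph of $\mathscr{T}_{\sigma_{\pi}}$ (since every equality $b_i+1 = b_j$ holding in $\pi$ certainly holds in \emph{some} permutation of $T_{\sigma_{\pi}}$, namely $\pi$ itself), so it remains to show it is acyclic. The key observation here is that a cycle $b_{i_1} \to b_{i_2} \to \cdots \to b_{i_m} \to b_{i_1}$ in $\mathscr{M}_{\pi}$ would force $b_{i_2} = b_{i_1}+1$, $b_{i_3} = b_{i_2}+1$, \ldots, wrapping around to $b_{i_1} = b_{i_m}+1$; summing these relations around the cycle gives $0 = m$, a contradiction. (Equivalently: such a cycle would make $\{b_{i_1}, \ldots, b_{i_m}\}$ a set of $m$ distinct integers each of which is one more than another one in the set, impossible for a finite set.) So every merge graph is an acyclic edge subgraph of a $\tau$-graph.

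For the converse, suppose $G$ is an acyclic edge subgraph of some $\tau$-graph $\mathscr{T}_{\sigma}$, where $\sigma = (b_{x_1}, \ldots, b_{x_{k-1}}, b_1)$. I need to construct a permutation $\pi \in T_{\sigma}$ whose merge graph is exactly $G$. The idea is to build $\pi$ from the skeleton \eqref{pairform} determined by $\sigma$, then merge precisely the pairs corresponding to the edges of $G$. Concretely, each edge $(b_i, b_j) \in E(G) \subseteq E(\mathscr{T}_\sigma)$ encodes the equality $b_i + 1 = b_j$, which (per the remark after Definition \ref{taugraph}) means we want the merge $b_{i+1}\, b_j$ in $\pi$ — i.e. we glue the grouping led by $b_j$ onto the end of the pair $b_{i+1}\ b_i + 1$. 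Since $G$ is acyclic and has in- and out-degree at most one at every vertex, its connected components are directed paths; performing the merges along each path produces a valid system of groupings of the form described before Lemma \ref{lem:groupingplacement}. By Lemma \ref{lem:filler} (or simply by choosing \emph{any} consistent numerical assignment to the remaining free variables and filler positions), this grouping framework is realized by at least one permutation $\pi \in \textsf{S}_n$, and by construction $\sigma_\pi = \sigma$ and $\mathscr{M}_\pi = G$.

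The one subtlety — and the place I expect the main obstacle — is checking that merging along the paths of $G$ genuinely yields a legal permutation, i.e. that no two merges demand incompatible placements and that the resulting arrangement still satisfies the three conditions of Proposition \ref{form}. This amounts to verifying that the ``grouping'' operation composes correctly: when several pairs are chained by merges into one grouping, the leftmost element is the unique undetermined variable and all the interior determined elements $b_{x_i-1}+1$ get identified, via the edge relations, with the strategic-pile variable leading the next pair in the chain — exactly as illustrated in Example \ref{ex:grouping}. The acyclicity of $G$ is precisely what guarantees these chains terminate (so no grouping tries to "eat its own tail"), and the in/out-degree $\le 1$ condition guarantees the chains don't branch, so each grouping is an unambiguous left-to-right block. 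Once this structural check is in place, the existence of a realizing permutation $\pi$ with $\mathscr{M}_\pi = G$ is immediate, completing the proof.
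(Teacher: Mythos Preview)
Your argument is correct and matches the paper's approach: the forward direction is the same telescoping contradiction (a cycle in $\mathscr{M}_\pi$ forces both $b_{x_1} = b_{x_\ell}+1$ and $b_{x_1} = b_{x_\ell}-(\ell-1)$), and for the converse the paper simply asserts in one line that an acyclic edge subgraph can be realized ``because our only constraint on merges that can occur is cyclic relationships between strategic pile variables.'' Your converse is considerably more fleshed out than the paper's one-sentence version---in particular your explanation of why acyclicity and the degree bounds make the grouping construction unambiguous---but the underlying idea is identical.
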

\begin{proof}

Suppose on the contrary that $\pi$ is a permutation such that its merge graph $\mathscr{M}_{\pi}$ contains an $\ell$-cycle between consecutive vertices $v_1, v_2, \ldots, v_{\ell}$ for some $\ell>0$, and let $b_{x_i}$ be the strategic pile element corresponding to the vertex $v_i$ for $1\le i\le \ell$. Then, by definition of $\mathscr{M}_{\pi}$, $b_{x_1}+1 = b_{x_2} = b_{x_3}-1$, so $b_{x_1} = b_{x_3}-2$. Continuing in this manner, we see that $b_{x_1} = b_{x_{\ell}} - (\ell-1)$. However, as the directed edges among these vertices form a cycle, we also have that $b_{x_1} = b_{x_{\ell}} +1 $. Since $\ell>0$, this is impossible.

Conversely, given an acyclic edge subgraph of a $\tau$-graph, one can construct an ordered pair list with the corresponding merges. This can be done because our only constraint on merges that can occur is cyclic relationships between strategic pile variables.  
\end{proof}

\begin{example}
For $\pi = [5\;4\;6\;3\;2\;1]$ we have 
$\textsf{SP}^*(\pi) = (1,\;3,\;5,\;4) = (b_1,\; b_2,\; b_3,\; b_4)$ and ordered pair list $\sigma =(5,\; 4,\; 3,\; 1) = (b_3,\;b_4,\;b_2,\;b_1)$. Thus, $\psi = (1\; 3\; 5\; 4) = (b_1\; b_2\; b_3 \; b_4)$ and $\sigma^* = (5\; 4\; 3\; 1) = (b_3 \; b_4 \; b_2 \; b_1)$. Note that 
\[
   \tau_{\sigma} = \sigma^* \circ \psi = (b_3 \; b_4 \; b_2 \; b_1)\circ (b_1\; b_2\; b_3 \; b_4) = (b_1)\circ(b_2\; b_4\; b_3).
\]
The cycle $(b_2\; b_4\; b_3)$ in the cycle decomposition of the permutation $\tau_{\sigma}$ indicates that the $\tau$-graph $\mathscr{T}_{\sigma}$ will have edges $(b_2, b_4)$, $(b_4, b_3)$, and $(b_3, b_2)$. Lemma \ref{lemma:mergecycles} implies that any \emph{proper} subset of these three edges can occur in $\mathscr{M}_{\pi}$. Indeed, $b_2+1=b_4$ and $b_4+1=b_3$ in $\pi$, while $b_3+1 \neq b_2$ in $\pi$. The merge graph $\mathscr{M}_{\pi}$ and $\tau$-graph $\mathscr{T}_{\sigma}$ are depicted in Figure \ref{fig:mergeandtau1}.
\end{example}

\begin{figure}
\centering
\captionsetup{width=.7\linewidth}
\captionsetup{justification=centering}
\begin{tikzpicture}[x = 4cm, y = 4cm, scale=.38, baseline = (current bounding box.north)]
       \vertex (c1) at (1.6,-1.0)[label,minimum size = 1.0cm,scale=.8,fill=lightgrey] {\textcolor{black}{\large $b_{2}$}};
       \vertex (c2) at (0,0)[label,minimum size = 1.0cm,scale=.8,fill=lightgrey] {\textcolor{black}{\large $b_1$}};
       \vertex (c3) at (1.6,1.0)[label,minimum size = 1.0cm,scale=.8,fill=lightgrey] {\textcolor{black}{\large $b_3$}};
       \vertex (c4) at (3.1,0.0)[label,minimum size = 1.0cm,scale=.8,fill=lightgrey] {\textcolor{black}{\large $b_4$}};
\path
        (c4) edge [style={->, black, line width=1mm}](c3)
        (c1) edge [style={->, black, line width=1mm}](c4)
        (c3) edge [style={->,dashed, double=black, thick}](c1);
\end{tikzpicture}
\caption{Merge graph of $\pi= [5\;4\;6\;3\;2\;1]$ (solid) as a subgraph of the $\tau$-graph corresponding to $\sigma_{\pi}$ (solid and dashed).}
\label{fig:mergeandtau1}
\end{figure}
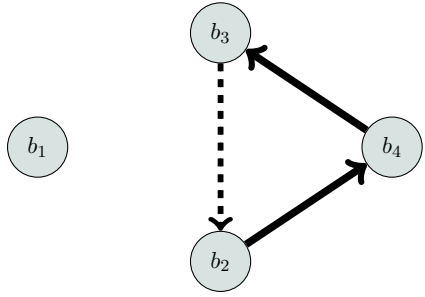

\subsection{Maximum Number of Merges} \label{3.4:mergenumbers}
We will now determine the maximum number of merges that can occur in a permutation in $\textsf{S}_n$ with strategic pile size $k$. This will function to show that $c_{k,i}=0$ for all $i$ greater than a certain value. We start with the following observation. 

\begin{remark} \label{smouchability}
Let $\sigma = (b_{x_1}, b_{x_2}, \ldots,b_{x_{k-1}}, b_1)$. Since $\tau_{\sigma} \in \textsf{S}_k$ is the composition of two cycles of the same length $k$, $\tau_{\sigma}$ is an even permutation. It follows that if $k$ is even, then $\tau_{\sigma}$ cannot be a $k$-cycle. 
\end{remark}

This observation, in conjunction with our graph theoretic tools, will give us our desired result:

\begin{lemma} \label{smouchenumber}
Consider the set $T$ of permutations with strategic piles of size $k$.
\begin{enumerate}
\item{If $k$ is odd, then the number of merges for any permutation in $T$ is at most $k-1$.}
\item{If $k$ is even, then the number of merges for any permutation in $T$ is at most $k-2$.}
\end{enumerate}
\end{lemma}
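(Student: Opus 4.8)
The plan is to translate ``number of merges in a permutation $\pi$'' into ``number of edges in the merge graph $\mathscr{M}_{\pi}$'', and then bound that quantity using the structural facts established in Lemma \ref{lemma:mergecycles} together with the parity observation of Remark \ref{smouchability}. Recall that $\mathscr{M}_{\pi}$ has vertex set $\{b_1,\ldots,b_k\}$ (so $k$ vertices), is a subgraph of $\mathscr{T}_{\sigma}$ where $\sigma = \sigma_{\pi}$, and by Lemma \ref{lemma:mergecycles} is \emph{acyclic}. Each merge in $\pi$ corresponds to an edge of $\mathscr{M}_{\pi}$, so the number of merges equals $|E(\mathscr{M}_{\pi})|$. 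Since $\mathscr{T}_{\sigma}$ is a disjoint union of directed cycles and isolated vertices (each cycle corresponding to a cyclic factor of $\tau_{\sigma}$), and $\mathscr{M}_{\pi}$ is an acyclic edge-subgraph of it, within each $\ell$-cycle of $\mathscr{T}_{\sigma}$ the merge graph can use at most $\ell-1$ of those $\ell$ edges. Summing over all cycles of $\mathscr{T}_{\sigma}$: if the cycle lengths are $\ell_1,\ldots,\ell_m$ (with $\sum \ell_t \le k$, the remaining vertices being isolated and contributing no edges), then $|E(\mathscr{M}_{\pi})| \le \sum_{t=1}^m (\ell_t - 1) = \big(\sum \ell_t\big) - m \le k - m$.

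Now I would split on the parity of $k$. If $k$ is odd, then $m \ge 1$ whenever $\tau_{\sigma}$ has any nontrivial cycle at all — and if $\tau_{\sigma}$ is the identity there are no edges in $\mathscr{T}_{\sigma}$, hence at most $0 \le k-1$ merges; in either case the number of merges is at most $k - 1$, proving (1). (One should note that $m \ge 1$ is the only input needed here, not anything finer.) If $k$ is even, Remark \ref{smouchability} tells us $\tau_{\sigma}$ cannot be a single $k$-cycle; therefore its cycle decomposition, when restricted to nontrivial cycles, either has $m \ge 2$ nontrivial cycles, or has $m = 1$ nontrivial cycle of length $\ell_1 \le k-1$ together with at least one fixed point. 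In the first case $|E(\mathscr{M}_{\pi})| \le k - m \le k - 2$. In the second case $|E(\mathscr{M}_{\pi})| \le \ell_1 - 1 \le (k-1) - 1 = k - 2$. And if $\tau_{\sigma}$ is the identity, the bound $0 \le k-2$ is trivial (using $k \ge 2$, which holds since a merge requires at least two strategic pile elements — and if $k < 2$ statement (2) is vacuous anyway). Either way we get at most $k-2$ merges, proving (2).

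The main obstacle — really the only subtle point — is making precise and justifying the claim that an acyclic subgraph of a disjoint union of directed cycles omits at least one edge from each cycle it touches, and in particular that $\mathscr{M}_{\pi}$ omits at least one edge per cycle even when $\mathscr{T}_{\sigma}$ has just one cycle. This is exactly the content of the ``proper subgraph'' remark preceding Lemma \ref{lemma:mergecycles} and follows from Lemma \ref{lemma:mergecycles} itself (a merge graph is acyclic, so cannot contain a full directed cycle); I would cite it explicitly rather than re-derive it. The other place to be careful is bookkeeping in the even case: one must rule out $\tau_{\sigma}$ being a $k$-cycle (done by Remark \ref{smouchability}) \emph{and} observe that any other cycle type forces either $m\ge 2$ nontrivial cycles or a nontrivial cycle of length $\le k-1$, both of which give the $k-2$ bound via the same ``$\le k - m$'' estimate. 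Everything else is routine counting, so the proof should be short.
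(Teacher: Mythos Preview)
Your proof is correct and follows essentially the same approach as the paper: both arguments use Lemma \ref{lemma:mergecycles} (acyclicity of $\mathscr{M}_{\pi}$) to bound the edge count for part (1), and invoke Remark \ref{smouchability} (no $k$-cycle in $\tau_{\sigma}$ when $k$ is even) for part (2). Your bookkeeping in the even case is in fact more thorough than the paper's---your $|E(\mathscr{M}_{\pi})| \le k - m$ estimate explicitly covers the multi-cycle configurations, whereas the paper simply notes that the largest cycle in $\mathscr{T}_{\sigma}$ has length at most $k-1$ and concludes from there.
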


\begin{proof}
{\flushleft{Case 1: }Let $k$ be odd.} Suppose $\pi \in \textsf{S}_n$ has ordered strategic pile $\textsf{SP}^*(\pi)=(b_1, \ldots, b_k)$. Consider the merge graph $\mathscr{M}_{\pi}$, which will have $k$ vertices. By definition, $\mathscr{M}_{\pi}$ is at most in-degree one and out-degree one. Since Lemma \ref{lemma:mergecycles} gives that $\mathscr{M}_{\pi}$ is acyclic, it follows that the number of edges in $\mathscr{M}_{\pi}$ does not exceed $k-1$. Thus, there are at most $k-1$ merges in $\pi$.  

{\flushleft{Case 2: }Let $k$ be even.} By Remark \ref{smouchability}, the cycle decomposition of $\tau$ does not contain a $k$-cycle, and thus the largest possible cycle in $\mathscr{T}$ is a $(k-1)$-cycle. It follows that there are at most $k-2$ merges in any permutation $\alpha \in T$. 
 \end{proof}

Lemma \ref{smouchenumber} tells us that $c_{k,i} = 0$ for large enough $i$, or in other words, that the number of permutations of $n$ elements with strategic pile of size $k$ can be written
\begin{equation*}
(n-k)!\sum_{i=0}^{t} c_{k,i} \binom{n-(k+1)}{k-i-1}
\end{equation*}
where $t=k-1$ if $k$ is odd and $t=k-2$ if $k$ is even.
 
\subsection{Merge Number Algorithm} \label{subsec:algo}

In the previous subsection, we established that $c_{k,i}=0$ for $i > k-1$ when $k$ is odd and for $i>k-2$ when $k$ is even. We now discuss how to compute the merge numbers corresponding to smaller $i$. We present Algorithm \ref{algorithm} for computing such merge numbers, prove its correctness, and discuss its complexity and what work still needs to be done in order to make this algorithm more efficient.

\begin{alg}[Merge Number Computation]  \label{algorithm}~\\
\emph{Input:} Integers $k$ and $\ell$ where $\ell \leq k-1$ if $k$ is odd and $\ell \leq k-2$ if $k$ is even.\\
\emph{Output:} The merge number $c_{k,\ell}$.
\begin{enumerate}[topsep=0pt]
\item Consider all possible cycle structures for a $\tau$-graph with $k$ vertices. 
\item For each of these cycle structures:
\begin{enumerate}
\item determine the number of ordered pair lists $\sigma = (b_{x_1}, b_{x_2}, \ldots, b_{x_{k-1}}, b_1)$ which yield the given cycle structure.
\item multiply by the number of ways $\ell$ merges can be chosen from the given cycle structure.
\end{enumerate}
\item Sum the results of the calculation for each cycle structure.
\end{enumerate}
\end{alg}

\subsubsection{Correctness and Complexity of Step 1}

Step 1 of Algorithm \ref{algorithm} requires considering all possible cycle structures for a $\tau$-graph with $k$ vertices. Recall that a $\tau$-graph consists only of cycles and isolated vertices. As a result, we can use the following notation to refer to the cycle structure of a $\tau$-graph:

\begin{notation}
Let $[a_1, a_2, \ldots, a_m]$ denote the cycle structure of a $\tau$-graph $\mathscr{T}_{\sigma}$, where $a_1 \geq a_2 \geq \cdots \geq a_m > 0$, and where each $a_i$ corresponds to the number of edges of a cycle in $\mathscr{T}_{\sigma}$. We do not include isolated vertices in our cycle structure representation.
\end{notation}

Since any $\tau$-graph is in-degree at most one and out-degree at most one, a $\tau$-graph on $k$ vertices has at most $k$ edges. As a result, the cycle structure $[a_1, a_2, \ldots, a_m]$ corresponding to $\mathscr{T}_{\sigma}$ satisfies $\sum_{1 \leq i \leq m} a_i \leq k$. Therefore, to consider all possible cycle structures for a $\tau$-graph with $k$ vertices, it would suffice to consider all integer partitions of at most $k$. However, to speed up Step 1, we'd like to be able to consider a smaller set of partitions.

To this end, note that a $\tau$-graph $\mathscr{T}_{\sigma}$ on $k$ vertices contains no self-loops, since this would imply that $b_i+1 = b_i$ for some strategic pile element $b_i$. Moreover, note that the cycle structure of $\mathscr{T}_{\sigma}$ must have an even number of even parts, since $\tau_{\sigma}$ is an even permutation. To summarize,   

\begin{remark} \label{rmk:partition}
Every $\tau$-graph on $k$ vertices has a cycle structure in the form of an integer partition $[a_1, a_2, \ldots, a_m]$, with no parts of size one, with an even number of even parts, and with $\sum_{1 \leq i \leq m} a_i \leq k$.
\end{remark}

\begin{figure}
\centering
\captionsetup{width=.7\linewidth}
\captionsetup{justification=centering}
\begin{tikzpicture}[x = 4cm, y = 4cm, scale=.38, baseline = (current bounding box.north)]

\       \vertex (c1) at (1.3,-1.2)[label,minimum size = 1.0cm,scale=.8,fill=lightgrey] {\textcolor{black}{\large $b_{1}$}};
       \vertex (c2) at (0.2,-0.5)[label,minimum size = 1.0cm,scale=.8,fill=lightgrey] {\textcolor{black}{\large $b_2$}};
       \vertex (c3) at (0.2,0.5)[label,minimum size = 1.0cm,scale=.8,fill=lightgrey] {\textcolor{black}{\large $b_3$}};
       \vertex (c4) at (1.3,1.2)[label,minimum size = 1.0cm,scale=.8,fill=lightgrey] {\textcolor{black}{\large $b_4$}};
       \vertex (c5) at (2.4,0.5)[label,minimum size = 1.0cm,scale=.8,fill=lightgrey] {\textcolor{black}{\large $b_5$}};
       \vertex (c6) at (2.4,-0.5)[label,minimum size = 1.0cm,scale=.8,fill=lightgrey] {\textcolor{black}{\large $b_6$}};

\path
        (c1) edge [style={->, black, line width=1mm}](c6)
        (c6) edge [style={->, black, line width=1mm}](c5) 
        (c5) edge [style={->, black, line width=1mm}](c1)
       	
        (c4) edge [style={->, black, line width=1mm}](c3)
        (c3) edge [style={->, black, line width=1mm}](c2)
        (c2) edge [style={->, black, line width=1mm}](c4);
  
\end{tikzpicture}
\caption{A $\tau$-graph with cycle structure $[3, 3]$}
\label{fig:TauPartition}
\end{figure}
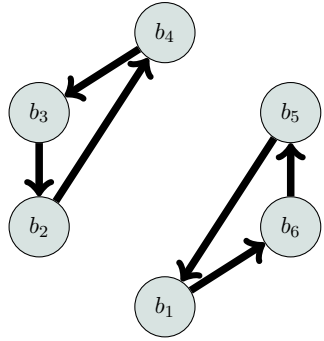

\begin{example}
Consider the graph in Figure \ref{fig:TauPartition}, which has cycle structure $[3, 3]$. Notice that this cycle structure satisfies the conditions described in Remark \ref{rmk:partition}, which are necessary conditions for a cycle structure to correspond to a $\tau$-graph. Indeed, this graph can be derived from the ordered pair list $\sigma = (b_5, b_3, b_4, b_2, b_6, b_1)$, and is thus a $\tau$-graph. 
\end{example}

We will not prove that every integer partition satisfying the conditions of Remark \ref{rmk:partition} corresponds to the cycle structure of a $\tau$-graph on $k$ vertices (i.e. the converse of Remark \ref{rmk:partition}), since it will not affect the correctness of our algorithm. If it happens that we consider in Step 1 a partition that does not correspond to the cycle structure of a $\tau$-graph on $k$ vertices, Step 2(a) will yield a zero, so we will not be over-counting.  

Unfortunately, the best way currently known to determine the set of integer partitions satisfying the properties of Remark \ref{rmk:partition} is the brute force method of checking every partition of every integer from $1$ to $k$. Since the number of integer partitions of an integer $n$ grows exponentially with $n$ \cite{ramanujan}, Step 1 is inefficient for large $k$. 
 
\subsubsection{Correctness and Complexity of Step 2(a)} \label{subsubsec:2a}

Step 2(a) requires determining the number of ordered pair lists $\sigma = (b_{x_1}, b_{x_2}, \ldots, b_{x_{k-1}}, b_1)$ which yield a given $\tau$-graph cycle structure. As in Step 1, this can be done through brute force; namely, one can generate all $O(k!)$ possible ordered pair lists, and for each ordered pair list $\sigma$, can compute $\tau_{\sigma} = \sigma^* \circ \psi$ to determine whether $\tau_{\sigma}$ has the given cycle structure. Since each computation of $\tau_{\sigma}$ requires $O(k)$ time, Step 2(a) can be completed in $O(k \cdot k!)$ with this brute force method.    

It is possible, however, that this step could be accomplished in polynomial time using a recursive formula. We will now derive such a formula, though a method for efficiently computing the base cases for this formula is currently unknown. Our derivation will involve understanding the relationship between $\tau$-graphs on $k$ vertices with cycle structure $[a_1, a_2, \ldots, a_m]$ and $\tau$-graphs on $k-1$ vertices with the same cycle structure. To build intuition for this relationship, let us consider an example.

\begin{example} \label{ex:Recursion}
The aforementioned relationship will be established by rotating and removing vertices from $\tau$-graphs. For example, consider the $\tau$-graph corresponding to the ordered pair list $(b_2,b_5,b_4,b_3,b_1)$ (see Figure \ref{fig:recursiveUnrotated}). In Lemma \ref{groupaction} we will prove that any rotation of a $\tau$-graph is also a $\tau$-graph. In particular, any rotation of the $\tau$-graph in Figure \ref{fig:recursiveUnrotated} is a $\tau$-graph; Figure \ref{fig:recursiveRotated} shows the rotation that is the $\tau$-graph corresponding to the ordered pair list $(b_5,b_3,b_4,b_2,b_1)$. Note that this $\tau$-graph has $b_5$ as an isolated vertex.  

In a graph with $b_k$ (in this case $b_5$) as an isolated vertex, removing $b_k$ will give us another $\tau$-graph; this is because when $b_k$ is an isolated vertex, removing the vertex $b_k$ corresponds to removing $b_k+1 \ldots b_k$ from the beginning of the pair ordering. This will leave $b_{k-1}+1$ at the beginning of the pair ordering, which will yield a valid ordered pair list on $k-1$ strategic pile elements. Figure \ref{fig:recursiveRemoved} shows the $\tau$-graph corresponding to the ordered pair list $(b_3,b_4,b_2,b_1)$ that occurs when $b_5$ is removed from our example $\tau$-graph.
\end{example}

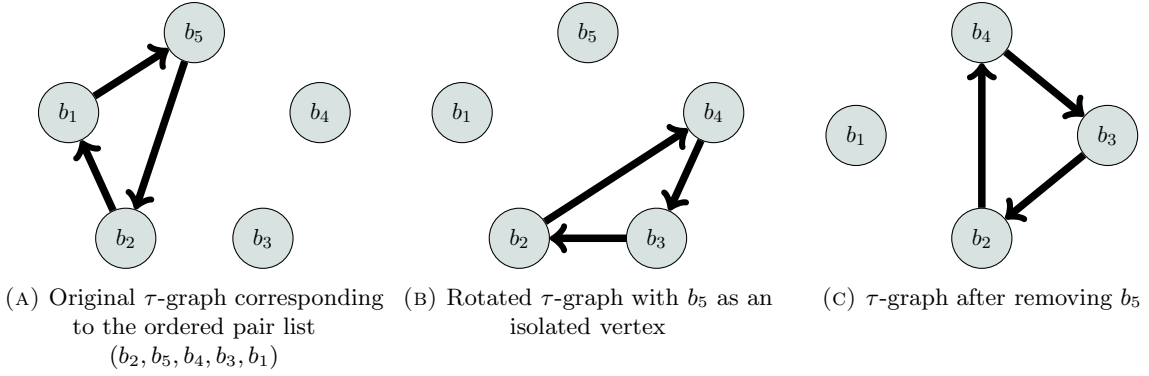
\begin{figure}[t]
\centering
\begin{subfigure}[t]{0.31\textwidth}
\centering
\begin{tikzpicture}[x = 4cm, y = 4cm, scale=.38, baseline = (current bounding box.north)]

       \vertex (c1) at (0.2,0.5)[label,minimum size = 1.0cm,scale=.8,fill=lightgrey] {\textcolor{black}{\large $b_1$}};
       \vertex (c2) at (0.7,-0.6)[label,minimum size = 1.0cm,scale=.8,fill=lightgrey] {\textcolor{black}{\large $b_2$}};
       \vertex (c3) at (1.9,-0.6)[label,minimum size = 1.0cm,scale=.8,fill=lightgrey] {\textcolor{black}{\large $b_3$}};
       \vertex (c4) at (2.4,0.5)[label,minimum size = 1.0cm,scale=.8,fill=lightgrey] {\textcolor{black}{\large $b_4$}};
       \vertex (c5) at (1.3,1.2)[label,minimum size = 1.0cm,scale=.8,fill=lightgrey] {\textcolor{black}{\large $b_5$}};
       
\path
        (c2) edge [style={->, black, line width=1mm}](c1)
        (c5) edge [style={->, black, line width=1mm}](c2) 
        (c1) edge [style={->, black, line width=1mm}](c5);
\end{tikzpicture}
\caption{\centering Original $\tau$-graph corresponding to the ordered pair list $(b_2,b_5,b_4,b_3,b_1)$} \label{fig:recursiveUnrotated}
\end{subfigure}
\begin{subfigure}[t]{0.31\textwidth}
\centering
\begin{tikzpicture}[x = 4cm, y = 4cm, scale=.38, baseline = (current bounding box.north)]

       \vertex (c1) at (0.2,0.5)[label,minimum size = 1.0cm,scale=.8,fill=lightgrey] {\textcolor{black}{\large $b_1$}};
       \vertex (c2) at (0.7,-0.6)[label,minimum size = 1.0cm,scale=.8,fill=lightgrey] {\textcolor{black}{\large $b_2$}};
       \vertex (c3) at (1.9,-0.6)[label,minimum size = 1.0cm,scale=.8,fill=lightgrey] {\textcolor{black}{\large $b_3$}};
       \vertex (c4) at (2.4,0.5)[label,minimum size = 1.0cm,scale=.8,fill=lightgrey] {\textcolor{black}{\large $b_4$}};
       \vertex (c5) at (1.3,1.2)[label,minimum size = 1.0cm,scale=.8,fill=lightgrey] {\textcolor{black}{\large $b_5$}};
       
\path
        (c4) edge [style={->, black, line width=1mm}](c3)
        (c3) edge [style={->, black, line width=1mm}](c2) 
        (c2) edge [style={->, black, line width=1mm}](c4);
  
\end{tikzpicture}
\caption{\centering Rotated $\tau$-graph with $b_5$ as an isolated vertex} \label{fig:recursiveRotated}
 \end{subfigure} 
 \begin{subfigure}[t]{0.31\textwidth}
\centering
\begin{tikzpicture}[x = 4cm, y = 4cm, scale=.38, baseline = (current bounding box.north)]

       \vertex (c1) at (0.2,0.3)[label,minimum size = 1.0cm,scale=.8,fill=lightgrey] {\textcolor{black}{\large $b_1$}};
       \vertex (c2) at (1.3,-0.6)[label,minimum size = 1.0cm,scale=.8,fill=lightgrey] {\textcolor{black}{\large $b_2$}};
       \vertex (c3) at (2.4,0.3)[label,minimum size = 1.0cm,scale=.8,fill=lightgrey] {\textcolor{black}{\large $b_3$}};
       \vertex (c4) at (1.3,1.2)[label,minimum size = 1.0cm,scale=.8,fill=lightgrey] {\textcolor{black}{\large $b_4$}};
       
\path
        (c2) edge [style={->, black, line width=1mm}](c4)
        (c4) edge [style={->, black, line width=1mm}](c3) 
        (c3) edge [style={->, black, line width=1mm}](c2);

\end{tikzpicture}
\caption{\centering $\tau$-graph after removing $b_5$} \label{fig:recursiveRemoved}
 \end{subfigure}
 
 \caption{$\tau$-graphs for Example \ref{ex:Recursion}} \label{fig:recursionEx}
\end{figure}

We will now formalize this idea. Let $X_{k,[a_1, a_2, \ldots, a_m]}$ be the set of $\tau$-graphs with $k$ vertices and with cycle structure $[a_1, a_2, \ldots, a_m]$. We are interested in finding a relationship between $\vert X_{k,[a_1, a_2, \ldots, a_m]} \vert$ and $\vert X_{k-1,[a_1, a_2,  \ldots, a_m]} \vert$. We begin by showing that if a given $\tau$-graph is in $X_{k,[a_1, a_2, \ldots,  a_m]}$, then so are all rotations of that graph. Thus we get a group action on $X_{k,[a_1, a_2, \ldots, a_m]}$.

\begin{lemma} \label{groupaction}
Let $\sigma = (b_{x_1}, \ldots, b_{x_{k-1}}, b_1)$ be an ordered pair list, and define $\varphi : \mathbb{Z}_k \times X_{k,[a_1, \ldots, a_m]} \rightarrow X_{k,[a_1, \ldots, a_m]}$ as
$$\varphi(i,\tau_{\sigma}) = \beta^i \circ \tau_{\sigma} \circ \beta^{-i},$$ 
where $\beta = (1 \; 2 \; 3 \; \cdots \; k)$. Then $\varphi$ is a group action. 
\end{lemma}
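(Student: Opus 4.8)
The plan is to check the two defining properties of a group action, after first confirming that $\varphi$ even makes sense as a map \emph{into} $X_{k,[a_1,\ldots,a_m]}$. The observation that drives everything is that, once each vertex $b_j$ is identified with the integer $j$, the cycle $\psi = (b_1\;b_2\;\cdots\;b_k)$ appearing in Definition~\ref{psiandsigmacycles} is literally $\beta$, so that $\tau_\sigma = \sigma^{*}\circ\psi = \sigma^{*}\circ\beta$ with $\sigma^{*}$ a $k$-cycle on $\{1,\ldots,k\}$. Equivalently, $X_{k,[a_1,\ldots,a_m]}$ is exactly the set of permutations $\mu\circ\beta$ where $\mu$ runs over all $k$-cycles and $\mu\circ\beta$ has cycle structure $[a_1,\ldots,a_m]$. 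I expect the genuine content of the lemma to be the well-definedness check; the two action axioms are then purely formal.

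For well-definedness, fix $\tau_\sigma\in X_{k,[a_1,\ldots,a_m]}$ and write $\tau_\sigma=\sigma^{*}\circ\beta$ as above. Since powers of $\beta$ commute,
\[
  \beta^{i}\circ\tau_\sigma\circ\beta^{-i}
  = \beta^{i}\circ\sigma^{*}\circ\beta\circ\beta^{-i}
  = \bigl(\beta^{i}\circ\sigma^{*}\circ\beta^{-i}\bigr)\circ\beta .
\]
Conjugation preserves cycle type, so $\mu' := \beta^{i}\circ\sigma^{*}\circ\beta^{-i}$ is again a $k$-cycle; writing $\mu'$ in cycle notation with $b_1$ as its final entry exhibits an ordered pair list $\sigma'$ with $(\sigma')^{*}=\mu'$, whence $\tau_{\sigma'} = \mu'\circ\psi = \beta^{i}\circ\tau_\sigma\circ\beta^{-i}$. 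This is precisely the assertion that a ``rotation'' of a $\tau$-graph is again a $\tau$-graph. Finally, $\tau_{\sigma'}=\beta^{i}\circ\tau_\sigma\circ\beta^{-i}$ is conjugate to $\tau_\sigma$ and hence has the same cycle structure $[a_1,\ldots,a_m]$, so it lies in $X_{k,[a_1,\ldots,a_m]}$ and $\varphi$ is well-defined.

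The action axioms then follow at once. We have $\varphi(0,\tau_\sigma)=\beta^{0}\circ\tau_\sigma\circ\beta^{0}=\tau_\sigma$, and for any $i,j\in\mathbb{Z}_k$,
\[
  \varphi\bigl(i,\varphi(j,\tau_\sigma)\bigr)
  = \beta^{i}\circ\bigl(\beta^{j}\circ\tau_\sigma\circ\beta^{-j}\bigr)\circ\beta^{-i}
  = \beta^{i+j}\circ\tau_\sigma\circ\beta^{-(i+j)}
  = \varphi(i+j,\tau_\sigma).
\]
One should also remark that $\varphi$ is well-defined on $\mathbb{Z}_k$ rather than on $\mathbb{Z}$: because $\beta$ has order $k$, replacing $i$ by $i+k$ leaves $\beta^{i}\circ\tau_\sigma\circ\beta^{-i}$ unchanged.

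The only place needing care is thus the well-definedness step, and the subtlety there is not that the conjugate might have the wrong cycle type — it plainly does not — but that one must verify the conjugate is again of the special form $(k\text{-cycle})\circ\psi$, hence actually realized as the $\tau$-graph of some ordered pair list. This is exactly where the identification $\psi=\beta$ and the commutativity of the powers of $\beta$ are doing the work.
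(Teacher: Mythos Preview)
Your proof is correct and follows essentially the same approach as the paper: both verify well-definedness by writing $\tau_\sigma=\sigma^*\circ\psi$, using that conjugation by $\beta^i$ fixes $\psi$ (equivalently, that powers of $\beta$ commute) to exhibit the conjugate as $\tau_{\sigma'}$ for a new ordered pair list $\sigma'$, and then dispatch the action axioms formally. Your version is slightly more explicit in identifying $\psi$ with $\beta$ and in noting that $\beta^k=\mathrm{id}$ makes $\varphi$ well-defined on $\mathbb{Z}_k$, but the argument is the same.
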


\begin{proof}
Let $\sigma = (b_{x_1}, \ldots, b_{x_{k-1}}, b_1)$ be an ordered pair list and suppose $\tau_{\sigma}$ has cycle structure $[a_1, \ldots, a_m]$.

We first show that $\varphi(i,\tau_{\sigma}) \in X_{k,[a_1, \ldots, a_m]}$. Since conjugation preserves cycle structure, it is clear that $\varphi(i,\tau_{\sigma})$ will be a graph on $k$ vertices with cycle structure $[a_1, \ldots, a_m]$. We have left to show that $\varphi(i,\tau_{\sigma})$ corresponds to an ordered pair list $\sigma'$ (i.e. that $\varphi(i, \tau_{\sigma}) = \tau_{\sigma'}$).

Recall that $\tau_{\sigma} = \sigma^* \circ \psi$. Therefore,
\begin{align*}
\varphi(i,\tau_{\sigma}) = \beta^i \circ \tau_{\sigma} \circ \beta^{-i} = \beta^i \circ \sigma^* \circ \psi \circ \beta^{-i} = \beta^i \circ \sigma^* \circ \beta^{-i} \circ \beta^i \circ \psi \circ \beta^{-i}.
\end{align*}
Observe that $\beta^i \circ \psi \circ \beta^{-i} = \psi$. Furthermore, $\beta^i \circ \sigma^* \circ \beta^{-i} = \sigma'$ is a $k$-cycle containing the elements $\{ b_1, \ldots, b_k \}$, and thus represents an ordered pair list. As a result, $\varphi(i,\tau_{\sigma}) = \psi \circ \sigma' = \tau_{\sigma'}$. It follows that $\varphi(i,\tau_{\sigma}) \in X_{k,[a_1, \ldots, a_m]}$, as desired. 

Finally, we check that $\varphi$ satisfies the axioms of group actions. Clearly, $\beta^0 = \beta^k$ is the identity permutation, and therefore $\varphi(0, \tau_{\sigma}) = \tau_{\sigma}$. In addition, 
\begin{equation*}
\varphi(i+j, \tau_{\sigma}) = \beta^{i+j} \circ \tau_{\sigma} \circ \beta^{-(i+j)} = \beta^i \beta^{j} \circ \tau_{\sigma} \circ \beta^{-j} \beta^{-i} = \varphi(i, \varphi(j, \tau_{\sigma})).
\end{equation*}
\end{proof}

We can use Lemma \ref{groupaction} to give a process for deriving $X_{k,[a_1, a_2, \ldots, a_m]}$ from $X_{k-1,[a_1, a_2, \ldots, a_m]}$.

\begin{lemma} \label{rotationequiv}
Let $G = (V, E)$ with $V = \{b_1,\ldots,b_k\}$, and let $a_1, \ldots, a_m \in \mathbb{Z}$ be such that $a_1 + a_2 +\ldots + a_m < k$. Then $G \in X_{k,[a_1, a_2, \ldots, a_m]}$ if and only if there exists some $G_r = (V_r, E_r) \in \orb_{\mathbb{Z}_k}(G)$ such that $b_k$ is an isolated vertex in $G_r$ and $G_s := (V_r\setminus \{ b_k \}, E_r) \in X_{k-1,[a_1, a_2, \ldots, a_m]}$.
\end{lemma}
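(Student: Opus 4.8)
The plan is to establish both directions of the biconditional using the group action $\varphi$ from Lemma \ref{groupaction} together with the dictionary between $\tau$-graphs, ordered pair lists, and the permutations $\tau_\sigma = \sigma^* \circ \psi$. The hypothesis $a_1 + \cdots + a_m < k$ is what guarantees that a $\tau$-graph with this cycle structure has at least one isolated vertex, since a $\tau$-graph on $k$ vertices consisting of cycles totaling fewer than $k$ edges must leave some vertex isolated.

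For the reverse direction, suppose we have $G_r \in \orb_{\mathbb{Z}_k}(G)$ with $b_k$ isolated and $G_s = (V_r \setminus \{b_k\}, E_r) \in X_{k-1,[a_1,\ldots,a_m]}$. By Lemma \ref{groupaction}, $G_r$ is itself a $\tau$-graph on $k$ vertices, hence so is any rotation of it; in particular $G$, being in the orbit of $G_r$, is a $\tau$-graph, and since conjugation preserves cycle structure (and $G$ is in the orbit under $\varphi$, which preserves membership in $X_{k,[a_1,\ldots,a_m]}$), we get $G \in X_{k,[a_1,\ldots,a_m]}$. First I would spell out that $\orb_{\mathbb{Z}_k}(G_r) = \orb_{\mathbb{Z}_k}(G)$ because orbits partition the set, so this direction is essentially immediate once we invoke Lemma \ref{groupaction}.

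For the forward direction, suppose $G \in X_{k,[a_1,\ldots,a_m]}$, say $G = \mathscr{T}_\sigma$ for an ordered pair list $\sigma = (b_{x_1}, \ldots, b_{x_{k-1}}, b_1)$. Because $a_1 + \cdots + a_m < k$, the graph $G$ has an isolated vertex, say $b_j$. The key observation is that applying a suitable power $\beta^i$ of the rotation $\beta = (1\;2\;\cdots\;k)$ sends the isolated vertex to $b_k$: one takes $G_r = \varphi(i, \tau_\sigma)$ for the appropriate $i$, which by Lemma \ref{groupaction} is again a $\tau$-graph $\mathscr{T}_{\sigma'}$ on $k$ vertices with the same cycle structure, now having $b_k$ as an isolated vertex. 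I would then argue, as sketched in Example \ref{ex:Recursion}, that when $b_k$ is isolated in $\mathscr{T}_{\sigma'}$, the ordered pair list $\sigma'$ begins with the block $b_k+1 \cdots b_k$, so deleting $b_k$ from the vertex set (and hence from the list, which begins $b_{k-1}+1 \cdots$ afterward) yields a valid ordered pair list $\sigma''$ on $k-1$ strategic pile elements; its $\tau$-graph is precisely $G_s = (V_r \setminus \{b_k\}, E_r)$, and since no edges incident to $b_k$ were removed (it was isolated), the cycle structure is unchanged, giving $G_s \in X_{k-1,[a_1,\ldots,a_m]}$.

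\textbf{Main obstacle.} The delicate point is the forward direction's claim that deleting an isolated $b_k$ really produces a \emph{valid} ordered pair list on $k-1$ elements with the \emph{same} edge set, i.e. verifying that $\tau_{\sigma''} = \sigma''^* \circ \psi'$ for the reduced $\psi' = (b_1\;\cdots\;b_{k-1})$ agrees with $E_r$ restricted to $V_r \setminus \{b_k\}$. One must check carefully that removing $b_k$ from both the $k$-cycle $\sigma'^*$ and from $\psi$ commutes appropriately with composition — concretely, that since $b_k$ is a fixed point of $\tau_{\sigma'} = \sigma'^* \circ \psi$, in the cycle $\sigma'^*$ the element $b_k$ is immediately preceded by $b_{k-1}$ (because $\psi(b_{k-1}) = b_k$ and $\tau_{\sigma'}(b_{k-1}) = b_{k-1}$ forces $\sigma'^*(b_k) = b_{k-1}$, i.e. $b_k$ maps to $b_{k-1}$ under $\sigma'^*$... wait, rather $\sigma'^*$ sends $b_k$ to $b_{k-1}$'s image), so excising $b_k$ from $\sigma'^*$ yields exactly $\sigma''^*$ and the composition identity descends. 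I would isolate this as a short computation verifying that a fixed point $b_k$ of $\tau_{\sigma'}$ sits in a predictable spot in $\sigma'^*$, after which the edge-set equality $E(\mathscr{T}_{\sigma''}) = E_r|_{V_r \setminus \{b_k\}}$ follows mechanically.
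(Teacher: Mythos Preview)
Your approach matches the paper's proof in structure: rotate to put an isolated vertex at position $b_k$, then delete/insert $b_k$ to pass between $X_{k,[a_1,\ldots,a_m]}$ and $X_{k-1,[a_1,\ldots,a_m]}$, using Lemma~\ref{groupaction} to move within orbits. Your forward direction and the ``main obstacle'' discussion are more algebraically explicit than the paper (which argues instead via the permutation form from Proposition~\ref{form}, simply removing the pair $b_k\,b_{k-1}+1$ from the front), but the content is the same.

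There is one small but real gap in your reverse direction: you write ``By Lemma~\ref{groupaction}, $G_r$ is itself a $\tau$-graph on $k$ vertices,'' but Lemma~\ref{groupaction} does not supply this---it only says that rotations preserve membership in $X_{k,[a_1,\ldots,a_m]}$ once something is already known to lie there. What is needed first, and what the paper provides, is the observation that $G_r$ is a $\tau$-graph because the ordered pair list for $G_s$ on $\{b_1,\ldots,b_{k-1}\}$ can be extended by prepending $b_k$; this yields a valid ordered pair list on $\{b_1,\ldots,b_k\}$ whose $\tau$-graph is exactly $G_s$ with $b_k$ adjoined as an isolated vertex (one checks $\tau_{\sigma'}(b_k)=\sigma'^*(\psi(b_k))=\sigma'^*(b_1)=b_k$). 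Once that is in hand, your invocation of Lemma~\ref{groupaction} to transfer membership from $G_r$ to $G$ is correct.
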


\begin{proof}
Assume $G \in X_{k,[a_1, a_2, \ldots, a_m]}$. Since $a_1 + a_2 + \cdots + a_m < k$, there exists at least one isolated vertex in $G$. Therefore, some rotation of $G$ has $b_k$ as an isolated vertex. Let this rotation be $G_r$ and let $G_s$ be defined as in the lemma statement.
We have left to show that $G_s$ is a $\tau$-graph. Since $G_r$ has $b_k$ as an isolated vertex, any permutation with ordered pair list corresponding to $G_r$ must be of the form
$$[b_k+1 \; \cdots \; b_k \; b_{k-1}+1 \; \cdots \; b_{\ell_1} \; b_{\ell_2-1}+1 \; \cdots \; b_{\ell_{k-1}} \; b_{\ell_{k-2}-1}+1 \; \cdots \; b_1].$$
Removing the $b_k \; b_{k-1}+1$ pair, we are left with an ordered pair list $\sigma = (b_{\ell_1}, b_{\ell_2}, \ldots,  b_{\ell_{k-2}}, b_1)$ of $k-1$ strategic pile elements. This ordered pair list clearly corresponds to the graph $G_s$, meaning $G_s$ is a $\tau$-graph; it follows that $G_s \in X_{k-1,[a_1, a_2, \ldots, a_m]}$.

Conversely, Let $G_s = (V_s, E_s)$ be an arbitrary graph in $X_{k-1,[a_1, a_2, \ldots, a_m]}$, and let $G_r := (V_s \cup \{b_k\}, E_s)$. Then $G_r$ is also a $\tau$-graph, since the ordered pair list associated with $G_r$ is the ordered pair list associated with $G_s$ with the addition of $b_k$ as the first element. Note that $\orb_{\mathbb{Z}_k}(G_r)$ is a subset of $X_{k,[a_1,a_2,\ldots,a_m]}$ since $\mathbb{Z}_k$ acts on $X_{k,[a_1,a_2,\ldots ,a_m]}$. It follows that $G \in X_{k,[a_1,\ldots ,a_m]}$ for any $G \in \orb_{\mathbb{Z}_k}(G_r)$. 
\end{proof}

Using Lemma \ref{rotationequiv} we can determine the number of elements in $X_{k,[a_1,a_2,\ldots,a_m]}$ by adding a vertex to each graph in $X_{k-1,[a_1,a_2,\ldots ,a_m]}$, and then considering all rotations of each of those graphs. However, the graphs formed through this method are not necessarily distinct. One of the reasons this is true is due to the fact that two graphs in $X_{k-1,[a_1,a_2,\ldots ,a_m]}$ may be in the same orbit when the vertex $b_k$ is added. The following lemma addresses this issue.

\begin{lemma} \label{edgeoverstab}
For all $\tau \in X_{k,[a_1,a_2, \ldots ,a_m]}$, let $Z_{\tau}$ be the set of all $\tau$-graphs in the orbit of $\tau$ under $\mathbb{Z}_k$ which do not have $b_k$ as an isolated vertex. Then 
$$|Z_{\tau}| = \frac{a_1+a_2+ \cdots + a_m}{|\stab(\tau)|}.$$
\end{lemma}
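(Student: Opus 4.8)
The plan is to run an orbit--stabilizer count whose crux is a change of index that converts the event ``$b_k$ is non-isolated in some rotation of $\tau$'' into the event ``a fixed vertex of $\tau$ is non-isolated.'' Recall from Definition \ref{taugraph} and the surrounding discussion that a vertex is isolated in the $\tau$-graph of $\tau_{\sigma}$ precisely when it is a fixed point of the permutation $\tau_{\sigma}$.

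\textbf{Step 1 (track which vertex lands in position $b_k$).} First I would regard $\beta=(1\;2\;\cdots\;k)$ as the subscript shift $b_j\mapsto b_{j+1}$, subscripts read modulo $k$, so that $\varphi(i,\tau)(b_a)=b_c$ exactly when $\tau(b_{a-i})=b_{c-i}$. Taking $a=c=k$, the vertex $b_k$ is a fixed point of $\varphi(i,\tau)$ --- i.e. $b_k$ is isolated in $\varphi(i,\tau)$ --- if and only if $b_{k-i}$ is a fixed point of $\tau$, i.e. $b_{k-i}$ is isolated in $\tau$. As $i$ runs over $\mathbb{Z}_k=\{0,1,\dots,k-1\}$, the subscript $k-i$ runs over every vertex of $\tau$ exactly once, so the number of $i\in\mathbb{Z}_k$ with $b_k$ \emph{not} isolated in $\varphi(i,\tau)$ equals the number of non-isolated vertices of $\tau$; since $\tau$ has cycle structure $[a_1,\dots,a_m]$ and a cycle with $a_j$ edges contains $a_j$ vertices, this count is $a_1+a_2+\cdots+a_m$.

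\textbf{Step 2 (pass from $\mathbb{Z}_k$ to the orbit).} Because $\varphi$ is a group action of the abelian group $\mathbb{Z}_k$ (Lemma \ref{groupaction}), we have $\varphi(i,\tau)=\varphi(j,\tau)$ if and only if $i-j\in\stab(\tau)$; hence $i\mapsto\varphi(i,\tau)$ maps $\mathbb{Z}_k$ onto $\orb_{\mathbb{Z}_k}(\tau)$ with every fiber a coset of $\stab(\tau)$, of size $\vert\stab(\tau)\vert$. Whether $b_k$ is isolated is a property of the graph $\varphi(i,\tau)$ alone, hence is constant on each fiber, so each graph of $Z_\tau$ is the image of exactly $\vert\stab(\tau)\vert$ values of $i$, while each orbit graph with $b_k$ isolated is the image of none of the $i$ counted in Step 1. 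Therefore $a_1+\cdots+a_m=\vert\stab(\tau)\vert\cdot\vert Z_\tau\vert$, and dividing gives the stated formula. (The degenerate case $a_1+\cdots+a_m=k$, where $\tau$ has no isolated vertices and $Z_\tau$ is the entire orbit of size $k/\vert\stab(\tau)\vert$, is covered as well.) The only delicate point is the bookkeeping in Step 1 --- pinning down the direction of the subscript shift so that ``$b_k$ isolated in $\varphi(i,\tau)$'' is equivalent to ``$b_{k-i}$ isolated in $\tau$'' and not an off-by-one variant; once that equivalence is secured, Step 2 is a routine orbit--stabilizer count using only that the isolation predicate factors through the quotient of $\mathbb{Z}_k$ by $\stab(\tau)$.
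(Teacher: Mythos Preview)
Your proof is correct and follows essentially the same approach as the paper's: both arguments count the $a_1+\cdots+a_m$ rotations (indexed either by edges terminating at $b_k$, as in the paper, or equivalently by non-isolated vertices rotated into position $b_k$, as you do) and then observe that each element of $Z_\tau$ is hit exactly $\lvert\stab(\tau)\rvert$ times. Your fixed-point bookkeeping in Step~1 and fiber argument in Step~2 are a slightly cleaner algebraic packaging of the paper's multiset-with-repetitions count, but the underlying idea is the same.
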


\begin{proof}
Define $\ell:= a_1+a_2+ \cdots + a_m$, and note that this is the number of edges in any member of $X_{k,[a_1,a_2,\ldots ,a_m]}$. Let $\tau \in X_{k,[a_1,a_2, \ldots,a_m]}$ and label the edges of $\tau$ as $e_1,e_2,\ldots,e_{\ell}$. Let $z_i$ be the rotation of $\tau$ such that $e_i$ is a directed edge terminating at $b_k$. Then $Z_{\tau} = \{z_1,z_2,...,z_{\ell} \}$, since $b_k$ is not an isolated vertex if and only if some edge points to $b_k$. However these $z_i$ are not necessarily distinct.

Observe that for a given $i$, the number if times $z_i$ appears in $Z_{\tau}$ is given by $\vert \stab(z_i) \vert $. Moreover, since $z_i \in \orb_{\mathbb{Z}_k}(\tau)$, we have that $\vert \stab(z_i) \vert = \vert \stab(\tau) \vert$. Therefore, 
$$|Z_{\tau}|= \frac{\ell}{|\stab(\tau)|}  = \frac{a_1+a_2+ \cdots +a_m}{|\stab(\tau)|}.$$

\end{proof}

We now have what we need to prove the main relationship between $ |X_{k,[a_1,a_2, \ldots ,a_m]}|$ and $|X_{k-1,[a_1,a_2, \ldots,a_m]}|$.

\begin{thm} \label{rec_struc}
For any $a_1, a_2, \ldots, a_m \in \mathbb{Z}$ such that $a_1 + a_2 + \ldots + a_m < k$,
$$ |X_{k,[a_1,a_2, \ldots ,a_m]}| = \frac{k|X_{k-1,[a_1,a_2, \ldots ,a_m]}|}{k-(a_1+a_2+\cdots +a_m)}$$
\end{thm}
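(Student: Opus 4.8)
The plan is to count, in two different ways, the number of graphs in $X_{k,[a_1,a_2,\ldots,a_m]}$ in which the vertex $b_k$ is isolated, and then to equate the two counts. Throughout, write $\ell := a_1 + a_2 + \cdots + a_m$, which by hypothesis satisfies $\ell < k$ and which, by the remarks preceding Lemma~\ref{edgeoverstab}, is the number of edges of every graph in $X_{k,[a_1,\ldots,a_m]}$.

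First I would establish that deleting the isolated vertex $b_k$ is a bijection
$\Delta \colon \{\,G \in X_{k,[a_1,\ldots,a_m]} : b_k \text{ is isolated in } G\,\} \longrightarrow X_{k-1,[a_1,\ldots,a_m]}$.
Well-definedness is exactly the computation carried out in the proof of Lemma~\ref{rotationequiv}: a $\tau$-graph with $b_k$ isolated comes from an ordered pair list beginning $b_k+1 \; \cdots \; b_k \; b_{k-1}+1 \; \cdots$, and discarding the $b_k \; b_{k-1}+1$ pair yields an ordered pair list on $k-1$ strategic pile elements whose $\tau$-graph is $(V\setminus\{b_k\},E)$. Injectivity of $\Delta$ is immediate, since two graphs with $b_k$ isolated that agree after deleting $b_k$ have the same vertex set and the same edge set. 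Surjectivity is the converse direction of Lemma~\ref{rotationequiv}: given $H \in X_{k-1,[a_1,\ldots,a_m]}$, adjoining $b_k$ as an isolated vertex produces a $\tau$-graph in $X_{k,[a_1,\ldots,a_m]}$ whose image under $\Delta$ is $H$. Hence $\#\{G \in X_{k,[a_1,\ldots,a_m]} : b_k \text{ isolated}\} = |X_{k-1,[a_1,\ldots,a_m]}|$.

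Next I would compute the same quantity orbit by orbit under the $\mathbb{Z}_k$-action $\varphi$ of Lemma~\ref{groupaction}. Fix $\tau \in X_{k,[a_1,\ldots,a_m]}$. By orbit--stabilizer, $|\orb_{\mathbb{Z}_k}(\tau)| = k/|\stab(\tau)|$, and by Lemma~\ref{edgeoverstab} exactly $\ell/|\stab(\tau)|$ members of this orbit fail to have $b_k$ isolated, so exactly $(k-\ell)/|\stab(\tau)|$ of its members do have $b_k$ isolated; that is, the fraction of the orbit with $b_k$ isolated is $(k-\ell)/k$, independently of $\tau$. Summing over a set of orbit representatives $g_1,\ldots,g_r$ of $X_{k,[a_1,\ldots,a_m]}$ gives
$\#\{G \in X_{k,[a_1,\ldots,a_m]} : b_k \text{ isolated}\} = \sum_{j=1}^{r} \frac{k-\ell}{|\stab(g_j)|} = \frac{k-\ell}{k}\sum_{j=1}^{r} |\orb_{\mathbb{Z}_k}(g_j)| = \frac{k-\ell}{k}\,|X_{k,[a_1,\ldots,a_m]}|.$

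Equating the two counts yields $|X_{k-1,[a_1,\ldots,a_m]}| = \frac{k-\ell}{k}\,|X_{k,[a_1,\ldots,a_m]}|$, which rearranges to $|X_{k,[a_1,\ldots,a_m]}| = \frac{k\,|X_{k-1,[a_1,\ldots,a_m]}|}{k-(a_1+a_2+\cdots+a_m)}$, as claimed. The genuinely substantive steps are the two I expect to be most delicate: verifying that the vertex-deletion and vertex-addition maps really are mutually inverse bijections between the ``$b_k$ isolated'' subset of $X_{k,[a_1,\ldots,a_m]}$ and all of $X_{k-1,[a_1,\ldots,a_m]}$ (this is where Lemma~\ref{rotationequiv}, in both directions, is doing the real work), and applying Lemma~\ref{edgeoverstab} correctly so that the per-orbit fraction of ``$b_k$ isolated'' graphs comes out to the uniform value $(k-\ell)/k$. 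Once those are in place, the remainder is routine orbit--stabilizer bookkeeping.
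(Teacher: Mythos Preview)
Your proof is correct and uses exactly the same ingredients as the paper's own proof---Lemmas~\ref{rotationequiv} and~\ref{edgeoverstab} together with orbit--stabilizer---organized as a direct double count of the graphs in $X_{k,[a_1,\ldots,a_m]}$ having $b_k$ isolated, whereas the paper phrases the same computation as an overcounting argument (add $b_k$ to each element of $X_{k-1}$, apply all $k$ rotations, and show each $\tau\in X_k$ is hit $|\stab(\tau)|\cdot|\orb(\tau)\setminus Z_\tau| = k-\ell$ times). The two arguments are interchangeable, and your presentation is arguably the cleaner of the two.
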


\begin{proof}
By Lemma \ref{rotationequiv}, in order to count $|X_{k,[a_1, a_2, \ldots ,a_m]}|$, we can add a vertex to every graph in $X_{k-1,[a_1,a_2, \ldots ,a_m]}$ and consider all rotations of these new graphs. Each of these graphs has $k$ possible rotations. However, this does not produce distinct elements of $X_{k,[a_1,a_2, \ldots ,a_m]}$. In fact, for each $\tau \in  X_{k,[a_1,a_2,\ldots ,a_m]}$, we have counted it $|\stab_{\mathbb{Z}_k}(\tau)|\cdot|\orb_{\mathbb{Z}_k}(\tau)\setminus Z_{\tau}|$ times. Since the addition of the vertex $b_k$ can cause non-isomorphic graphs in $X_{k-1,[a_1,a_2, \ldots ,a_m]}$ to be in the same orbit under $\mathbb{Z}_k$ (see Figure \ref{Fig:OrbitGraphs}), we have over-counted each orbit in $X_{k,[a_1,a_2, \ldots ,a_m]}$ by a factor of $|\orb_{\mathbb{Z}_k}(\tau)\setminus Z_{\tau}|$. Due to rotational symmetry, we over-count $\tau \in \orb_{\mathbb{Z}_k}(\tau)$ by a factor of $|\stab_{\mathbb{Z}_k}(\tau)|$.

By Lemma \ref{edgeoverstab}, for all $\tau \in X_{k,[a_1,a_2, \ldots, a_m]}$, we have that $\displaystyle |Z_{\tau}|= \frac{a_1+a_2+ \cdots + a_m}{|\stab(\tau)|}$. Recall that $Z_{\tau} \subseteq \orb_{\mathbb{Z}_k}(\tau)$. Therefore,
\begin{align*}
|\stab_{\mathbb{Z}_k}(\tau)|\cdot |\orb_{\mathbb{Z}_k}(\tau)\setminus Z_{\tau}| &= |\stab_{\mathbb{Z}_k}(\tau)| (|\orb_{\mathbb{Z}_k}(\tau)|-|Z_{\tau}|)\\
&= |\stab_{\mathbb{Z}_k}(\tau)| \cdot |\orb_{\mathbb{Z}_k}(\tau)| - |\stab_{\mathbb{Z}_k}(\tau)|\cdot |Z_{\tau}| \\
&= |\stab_{\mathbb{Z}_k}(\tau)|\cdot|\orb_{\mathbb{Z}_k}(\tau)|-(a_1+a_2 \cdots +a_m)
\end{align*}

Then, by the orbit stabilizer theorem, $|\stab_{\mathbb{Z}_k}(\tau)| \cdot |\orb_{\mathbb{Z}_k}(\tau)| = |\mathbb{Z}_k|=k$, so we have counted each rotation $k-(a_1+a_2+\cdots+a_m)$ times. Therefore, 
$$|X_{k,[a_1,a_2, \ldots ,a_m]}| = \frac{k |X_{k-1,[a_1,a_2,\ldots,a_m]}|}{k-(a_1+a_2+\cdots +a_m)}.$$ 

\end{proof}

\begin{figure}
\centering
\captionsetup{width=.7\linewidth}
\captionsetup{justification=centering}
\begin{tikzpicture}[x = 4cm, y = 4cm, scale=.38, baseline = (current bounding box.north)]

       \vertex (c4) at (1.6,-1.0)[label,minimum size = 1.0cm,scale=.8,fill=lightgrey] {\textcolor{black}{\large $b_{4}$}};
       \vertex (c5) at (0.5,-0.5)[label,minimum size = 1.0cm,scale=.8,fill=lightgrey] {\textcolor{black}{\large $b_5$}};
       \vertex (c6) at (-.1,0.5)[label,minimum size = 1.0cm,scale=.8,fill=lightgrey] {\textcolor{black}{\large $b_6$}};
       \vertex (c7) at (0.5,1.5)[label,minimum size = 1.0cm,scale=.8,fill=lightgrey] {\textcolor{black}{\large $b_7$}};
       \vertex (c8) at (1.6,2.0)[label,minimum size = 1.0cm, scale=.8,fill=lightgrey] {\textcolor{black}{\large $\star$}};
       \vertex (c1) at (2.7,1.5)[label,minimum size = 1.0cm,scale=.8,fill=lightgrey] {\textcolor{black}{\large $b_1$}};
       \vertex (c2) at (3.3,0.5)[label,minimum size = 1.0cm,scale=.8,fill=lightgrey] {\textcolor{black}{\large $b_2$}};
       \vertex (c3) at (2.7,-0.5)[label,minimum size = 1.0cm,scale=.8,fill=lightgrey] {\textcolor{black}{\large $b_3$}};
       
\path
        (c2) edge [style={->, black, line width=1mm}](c1)
        (c3) edge [style={->, black, line width=1mm}](c2) 	
        (c1) edge [style={->, black, line width=1mm}](c3)
        (c5) edge [style={->, black, line width=1mm}](c4)	
        (c7) edge [style={->, black, line width=1mm}](c5)
        (c4) edge [style={->, black, line width=1mm}](c7);
\end{tikzpicture}
\hspace{1cm}
\begin{tikzpicture}[x = 4cm, y = 4cm, scale=.38, baseline = (current bounding box.north)]

       \vertex (c4) at (1.6,-1.0)[label,minimum size = 1.0cm,scale=.8,fill=lightgrey] {\textcolor{black}{\large $b_{6}$}};
       \vertex (c5) at (0.5,-0.5)[label,minimum size = 1.0cm,scale=.8,fill=lightgrey] {\textcolor{black}{\large $b_7$}};
       \vertex (c6) at (-.1,0.5)[label,minimum size = 1.0cm,scale=.8,fill=lightgrey] {\textcolor{black}{\large $\star$}};
       \vertex (c7) at (0.5,1.5)[label,minimum size = 1.0cm,scale=.8,fill=lightgrey] {\textcolor{black}{\large $b_1$}};
       \vertex (c8) at (1.6,2.0)[label,minimum size = 1.0cm, scale=.8,fill=lightgrey] {\textcolor{black}{\large $b_2$}};
       \vertex (c1) at (2.7,1.5)[label,minimum size = 1.0cm,scale=.8,fill=lightgrey] {\textcolor{black}{\large $b_3$}};
       \vertex (c2) at (3.3,0.5)[label,minimum size = 1.0cm,scale=.8,fill=lightgrey] {\textcolor{black}{\large $b_4$}};
       \vertex (c3) at (2.7,-0.5)[label,minimum size = 1.0cm,scale=.8,fill=lightgrey] {\textcolor{black}{\large $b_5$}};
       
\path
        (c2) edge [style={->, black, line width=1mm}](c1)
        (c3) edge [style={->, black, line width=1mm}](c2) 	
        (c1) edge [style={->, black, line width=1mm}](c3)
        (c5) edge [style={->, black, line width=1mm}](c4)	
        (c7) edge [style={->, black, line width=1mm}](c5)
        (c4) edge [style={->, black, line width=1mm}](c7);
\end{tikzpicture}
\caption{Two non-isomorphic graphs that will be in the same $\mathbb{Z}_k$-orbit after the addition of the vertex $\star$.}
\label{Fig:OrbitGraphs}
\end{figure}
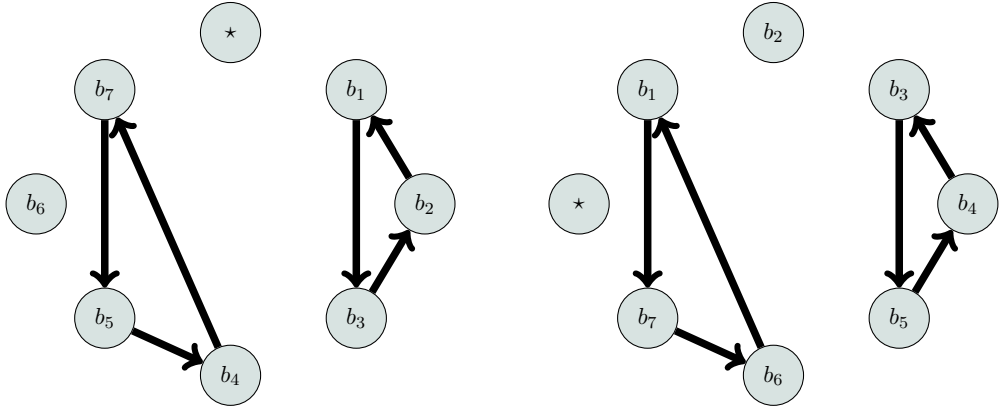

This recursive relationship could be useful for addressing Step 2(a) of the merge number algorithm. However it is only useful when the base cases, $|X_{\ell,[a_1,a_2,\ldots ,a_m]}|$ (where $\ell = a_1 + a_2 + \cdots + a_m$), are already known. Unfortunately, there is no known efficient way to compute these base cases. Using brute force in the same way as we can for Step 2(a) (see the beginning of Sub-subsection \ref{subsubsec:2a}), these base cases could be computed in $O(\ell \cdot \ell!)$ time. Since $\ell = O(k)$ in the worst case, this is not a significant improvement over the original brute force algorithm for Step 2(a).

\subsubsection{Step 2(b)}

Step 2(b) of the algorithm requires determining the number of ways $\ell$ merges can be picked with the given cycle structure. From Lemma \ref{lemma:mergecycles}, we can know that this is equivalent to choosing $\ell$ edges so that no cycle is formed.

Given a graph with cycle structure $[a_1, a_2, \ldots, a_m]$, let $e$ be the number of edges in the graph. This means that $e = a_1 + a_2 + \cdots + a_m$. The total number of ways to chose $\ell$ edges is $\binom{e}{\ell}$. The total number of ways to chose $\ell$ edges that include at least one cycle can be found using the inclusion-exclusion principle as shown below:
$$\sum \limits_{i=1}^{m} (-1)^{i+1} \sum \Bigg\{\binom{e-a_{k_1}-a_{k_2}- \cdots - a_{k_i}}{\ell-a_{k_1}-a_{k_2}- \cdots - a_{k_i}}: 1 \leq k_1,\ldots,k_i \leq m, \text{ all distinct} \Bigg\}$$
Subtracting this from $\binom{e}{\ell}$ yields the total number of ways to to chose $\ell$ edges without picking a cycle, which is what we wanted.

\section{Future work}

According to Theorem \ref{thm:maxstrpileodd}, for an odd natural number $n$, the number of elements of $\textsf{S}_n$ that have a maximum size strategic pile is $2\cdot(n-2)!$ This number is related to the number of factorizations given in the following result from Bertram and Wei:
\begin{thm}[\cite{BertramWei}]\label{thm:BW}
For $n\ge 3$, each odd permutation in $\textsf{S}_n$ has exactly $2(n-2)!$ factorizations of the form $\alpha\circ\beta$ where $\alpha$ is an $n$-cycle and $\beta$ is an $(n-1)$-cycle. 
\end{thm}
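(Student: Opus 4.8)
The plan is to evaluate the count head-on with the classical Frobenius class-algebra formula and then prune the resulting character sum using the Murnaghan--Nakayama rule. Fix an odd permutation $\pi\in\textsf{S}_n$ of cycle type $\mu\vdash n$. The number $N$ of pairs $(\alpha,\beta)$ with $\alpha$ an $n$-cycle, $\beta$ an $(n-1)$-cycle, and $\alpha\circ\beta=\pi$ is the structure constant of the class algebra of $\textsf{S}_n$ that counts the factorizations of a fixed element of the class of $\mu$, namely
\[
  N \;=\; \frac{|C_{(n)}|\,|C_{(n-1,1)}|}{n!}\;\sum_{\lambda\vdash n}\frac{\chi^{\lambda}(C_{(n)})\,\chi^{\lambda}(C_{(n-1,1)})\,\chi^{\lambda}(\mu)}{\chi^{\lambda}(1)},
\]
where $C_{\rho}$ denotes the conjugacy class of cycle type $\rho$ and $\chi^{\lambda}$ the irreducible character indexed by the partition $\lambda$; I use here that $\textsf{S}_n$ is ambivalent, so $\overline{\chi^{\lambda}(\mu)}=\chi^{\lambda}(\mu)$. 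Since $|C_{(n)}|=(n-1)!$ and $|C_{(n-1,1)}|=n(n-2)!$, the prefactor collapses to $(n-2)!$, so the theorem will follow once I show that the character sum equals $2$ for every odd $\mu$.

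The next step is to see that only two terms survive. By the Murnaghan--Nakayama rule, $\chi^{\lambda}(C_{(n)})=0$ unless $\lambda$ is a hook $(n-r,1^{r})$, in which case $\chi^{\lambda}(C_{(n)})=(-1)^{r}$. For such a hook, $\chi^{\lambda}(C_{(n-1,1)})$ is obtained by removing a rim hook of size $n-1$, which must leave a single cell; equivalently $\lambda\setminus\{(1,1)\}$ must be a border strip. A short case check shows that for a hook that is neither a single row nor a single column, deleting the corner cell disconnects the arm from the leg, so $\chi^{\lambda}(C_{(n-1,1)})=0$; hence only $\lambda=(n)$ and $\lambda=(1^{n})$ remain. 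For $\lambda=(n)$, the trivial character, every factor is $1$ and the summand is $1$. For $\lambda=(1^{n})$, the sign character, the summand is $\mathrm{sgn}(C_{(n)})\,\mathrm{sgn}(C_{(n-1,1)})\,\mathrm{sgn}(\mu)=(-1)^{n-1}(-1)^{n-2}(-1)=1$, where the last factor is $-1$ precisely because $\pi$ is odd. The sum is therefore $1+1=2$ and $N=2(n-2)!$, as claimed.

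I expect the Murnaghan--Nakayama bookkeeping in the second paragraph to be the only delicate point: one must argue cleanly that among all hooks only the single row and single column are border strips after deleting the corner cell, and keep track of the height signs correctly. It is worth recording the parity sanity check: if $\pi$ were \emph{even}, the sign-character summand would instead be $-1$ and the formula would give $N=0$, consistent with the fact that the composition of an $n$-cycle and an $(n-1)$-cycle is always an odd permutation. Finally, one could observe that the $\mu$-independence of $N$---the surprising content of the theorem---is exactly what this character computation makes transparent, and that the value $2(n-2)!$ reappears in this paper as the count in Theorem~\ref{thm:maxstrpileodd}, to which it is linked through the identity $X_n=Y_{\pi}^{-1}\circ C_{\pi}$ relating strategic piles to cycle factorizations.
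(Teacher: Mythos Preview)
Your argument is correct. The Frobenius class-algebra identity with the Murnaghan--Nakayama pruning is exactly the right tool here, and your bookkeeping is sound: for hooks $(n-r,1^r)$ with $0<r<n-1$, the only candidate border strip of size $n-1$ is $\lambda\setminus\{(1,1)\}$ (since the single remaining cell must sit at $(1,1)$ to form a partition), and deleting the corner disconnects arm from leg, so the character value vanishes. The sign computation for the two surviving terms is also clean, and your parity check (that an even $\pi$ gives $N=0$) is a nice consistency observation.

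As for comparison with the paper: there is nothing to compare. Theorem~\ref{thm:BW} is quoted from \cite{BertramWei} without proof; the paper only uses it as context in the Future Work section to frame the coincidence with the $2(n-2)!$ of Theorem~\ref{thm:maxstrpileodd}. So you have supplied a proof where the paper supplies only a citation. For what it is worth, Bertram and Wei's original argument is more combinatorial in flavor (building factorizations directly rather than via characters), whereas your route makes the $\mu$-independence of the count immediately transparent, as you note. Either way, your proposal stands on its own and needs no correction.
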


Viewing Theorem \ref{thm:BW} in our context, let $n\geq 3$ be an odd integer, and let $\pi$ be an element of $\textsf{S}_n$. With $X_n$, $Y_{\pi}$ and $C_{\pi}$ as defined in  equations (\ref{eq:X_ndef}), (\ref{eq:Y_pidef}) and (\ref {eq:C_pidef}), we are considering factorizations of $X_n$ of the form
\begin{equation*} 
 X_n = Y_{\pi}^{-1} \circ  C_{\pi},
\end{equation*}
where $C_{\pi}$ is a single cycle of length $n$, while $X_n$ and $Y_{\pi}$ are cycles of length $n+1$. Applying Theorem \ref{thm:BW}, we see that according that theorem there are $2(n-1)!$ factorizations of $X_n$ of the form $\mu \circ \nu$ where $\mu$ is an $(n+1)$-cycle and $\nu$ is an $n$-cycle. In each of these cases, we can write $\mu$ as a $Y_{\pi}^{-1}$ for some $\pi\in\textsf{S}_n$, and for $2(n-2)!$ of these $\pi$ the corresponding $\nu$ is a $C_{\pi}$ of the form $(0\; n\; i\; \cdots)$. 

\begin{example}\label{ex:cycles} Consider $n=5$. The following table indicates that $X_5$ has factorizations into a $6$-cycle and a $5$-cycle for which the corresponding permutations $\pi$ have various strategic pile sizes.

\begin{center}
\begin{tabular}{|c|c|c|c|}\hline
$\pi$ & $C_{\pi}$ & Strategic pile & Strategic pile size \\ \hline
$\lbrack 2\; 4\; 1\; 3\; 5\rbrack$ & $(0\; 4\; 3\; 2\; 1)$ & $\emptyset$ & $0$ \\ \hline
$\lbrack 5\; 2\; 3\; 1\; 4\rbrack$ & $(0\; 3\; 1\; 5\; 4)$ & $\{4\}$ & $1$ \\ \hline
$\lbrack 2\; 1\; 5\; 3\; 4\rbrack$ & $(0\; 2\; 5\; 4\; 1\}$ & $\{1,\;4\}$ & $2$ \\ \hline
$\lbrack 3\; 5\; 1\; 2\; 4\rbrack$ & $(0\; 5\; 4\; 3\; 2)$ & $\{2,\;3,\;4\}$ & $3$ \\ \hline
\end{tabular}\\
\end{center}
\end{example}

Thus, it can happen that the cycle $C_{\pi}$ of length $n$ in the factorization of $X_n$ represents a strategic pile of  size less than the maximal possible size for $n$. It would be interesting to determine, for odd integers $n$ and for each strategic pile size $0\le k\le n-2$ how many of the permutations in $\textsf{S}_n$ for which $C_{\pi}$ is a cycle of length $n$ have strategic pile size $k$. We have also not addressed the analogous question for the case when $n$ is an even integer.

In addition to the problem just described, we would like to either (1) improve the merge number algorithm described in Subsection \ref{subsec:algo} or (2) construct an alternative algorithm for computing merge numbers. 

Accomplishing (1) would require improving the following aspects of our algorithm. Let $k$ indicate strategic pile size. Recall that Step 1 of this algorithm requires determining the set of integer partitions of $k$ with no parts of size one, and with an even number of even parts. As previously mentioned, the number of integer partitions of $k$ grows exponentially in $k$ \cite{ramanujan}, meaning Step 1 is inefficient for large $k$. To make this step of the algorithm less costly, we would like a better method for computing the number of partitions with the aforementioned properties. Recall also that Step 2(a) of this algorithm can be done through brute force in $O(k \cdot k!)$ time. We offer a recursive method for completing Step 2(a) with runtime polynomial in $k$. However, this recursive method is only useful when the base cases, $\vert X_{\ell,[a_1,a_2,\ldots,a_m]} \vert$ (where $\ell = a_1+a_2+\cdots +a_m$), are already known. Unfortunately, the best known method for computing the base cases of this algorithm requires $O(\ell \cdot \ell!)$ time. Since this is no better than the brute force method for Step 2(a), we would like an efficient method for computing base cases so that our recursive method can be used to make Step 2(a) more efficient. 

Alternatively, it would be ideal to (2) construct an algorithm for computing merge numbers that completely circumvents the dependency on exponential time computations. However, due to the nature of merge numbers described above, it seems that these dependencies might be unavoidable. Consequently, it is not clear how realistic it would be to accomplish (2).

\section*{Acknowledgments}

This work was supported by NSF grant DMS-1359425 as a part of the 2016 Boise State University Complexity Across Disciplines Research Experience for Undergraduates (REU).

\end{document}